\documentclass[10pt]{article}%


\usepackage{float}

\usepackage{tikz}
\usepackage{pgf}

\usetikzlibrary{intersections}
\usetikzlibrary{patterns}
\usepackage{xcolor}

\definecolor{lightgray}{rgb}{0.827, 0.827, 0.827}

\usetikzlibrary{arrows}
\usetikzlibrary{calc}
\usetikzlibrary{decorations.pathmorphing}
\usepackage{xfrac}    

\usepackage{faktor}
\usepackage{soul}
\setstcolor{red}
\setulcolor{red}

\sloppy

\usepackage{rotating} 

\usepackage{mathrsfs}



\usepackage{romannum}

\usepackage{color}
\usepackage{amsmath}
\usepackage{graphicx}
\usepackage{graphics}
\usepackage{float}
\usepackage{amsfonts}
\usepackage{amssymb}%
\usepackage{latexsym}
\usepackage{psfrag}
\usepackage{accents}
\usepackage{caption}
\usepackage{subcaption}

\setcounter{MaxMatrixCols}{30}

\newtheorem{theorem}{Theorem}[section]
\newtheorem{corollary}[theorem]{Corollary}

\newenvironment{proof}[1][Proof]{\noindent \emph{#1.} }
{\hfill \ \rule{0.5em}{0.5em}}
\newtheorem{lemma}[theorem]{Lemma}
\newtheorem{proposition}[theorem]{Proposition}

\newtheorem{assumption}[theorem]{Assumption}

\numberwithin{equation}{section}
\numberwithin{table}{section}
\numberwithin{figure}{section}

\usepackage[a4paper]{geometry}
\geometry{left={3cm}, right={3cm}, top={3cm}, bottom={3cm}}

\newtheorem{remark}[theorem]{Remark}
\newtheorem{example}[theorem]{Example}
\newcommand{\noi}{\noindent}


\newcommand{\R}{\mathbb{R}}

\newcommand{\cH}{{\cal H}}

\newcommand{\cR}{{\cal R}}
\newcommand{\cS}{{\cal S}}

\newcommand{\cW}{{\cal W}}

\newcommand{\cY}{{\cal Y}}

\newcommand{\cO}{{\cal O}}
\newcommand{\cZ}{{\cal Z}}
\newcommand{\bx}{x}

\newcommand{\bnu}{\nu}






    \newcommand\quotient[2]{
        \mathchoice
            {
                \text{\raise1ex\hbox{$#1$}\Big/\lower1ex\hbox{$#2$}}%
            }
            {
                #1\,/\,#2
            }
            {
                #1\,/\,#2
            }
            {
                #1\,/\,#2
            }
    }

\newcommand{\re}{{\rm e}}
\newcommand{\ri}{{\rm i}}
\newcommand{\rd}{{\rm d}}





\newcommand{\beq}{\begin{equation}}
\newcommand{\eeq}{\end{equation}}
\newcommand{\beqs}{\begin{equation*}}
\newcommand{\eeqs}{\end{equation*}}
\newcommand{\bit}{\begin{itemize}}
\newcommand{\eit}{\end{itemize}}
\newcommand{\ben}{\begin{enumerate}}
\newcommand{\een}{\end{enumerate}}
\newcommand{\bal}{\begin{align}}
\newcommand{\eal}{\end{align}}
\newcommand{\bals}{\begin{align*}}
\newcommand{\eals}{\end{align*}}
\newcommand{\bse}{\begin{subequations}}
\newcommand{\ese}{\end{subequations}}
\newcommand{\bpr}{\begin{proposition}}
\newcommand{\epr}{\end{proposition}}
\newcommand{\bre}{\begin{remark}}
\newcommand{\ere}{\end{remark}}
\newcommand{\bpf}{\begin{proof}}
\newcommand{\epf}{\end{proof}}
\newcommand{\ble}{\begin{lemma}}
\newcommand{\ele}{\end{lemma}}
\newcommand{\bco}{\begin{corollary}}
\newcommand{\eco}{\end{corollary}}
\newcommand{\bex}{\begin{example}}
\newcommand{\eex}{\end{example}}
\newcommand{\bth}{\begin{theorem}}
\newcommand{\enth}{\end{theorem}}

\newcommand{\Rea}{\mathbb{R}}

\newcommand{\GR}{{\partial B_R}}

\newcommand{\eps}{\varepsilon}

\newcommand{\pdiff}[2]{\frac{\partial #1}{\partial #2}}


\newcommand{\tendi}{\rightarrow \infty}





\def\XXint#1#2#3{{\setbox0=\hbox{$#1{#2#3}{\int}$}
     \vcenter{\hbox{$#2#3$}}\kern-.5\wd0}}

\usepackage{hyperref}
\counterwithin{figure}{section}
\definecolor{myblue}{rgb}{0,0,0.6}
\hypersetup{colorlinks=true,linkcolor=myblue,citecolor=myblue,filecolor=myblue,urlcolor=myblue}
\newcommand*{\N}[1]{\left\|#1\right\|}

\allowdisplaybreaks[4]

\newcommand{\tfa}{\text{ for all }}
\newcommand{\tfor}{\text{ for }}

\newcommand{\tin}{\text{ in }}
\newcommand{\ton}{\text{ on }}

\newcommand{\tand}{\text{ and }}
\newcommand{\tst}{\text{ such that }}



\newcommand{\vertiii}[1]{{\left\vert\kern-0.25ex\left\vert\kern-0.25ex\left\vert #1
    \right\vert\kern-0.25ex\right\vert\kern-0.25ex\right\vert}}



\newcommand{\DtN}{{\rm DtN}_k}



\definecolor{jwcol}{RGB}{27, 137, 18}  

\definecolor{dalcol}{rgb}{0.8,0,0}

\definecolor{escol}{rgb}{0,0,0.8}
\definecolor{estcol}{rgb}{0,0.5,0}
\definecolor{esnewcol}{rgb}{0,0.5,0}

\newcommand{\es}[1]{{\color{black}{#1}}}



\newcommand{\Ascat}{A_{\rm scat}}
\newcommand{\cscat}{c_{\rm scat}}
\newcommand{\Ascatout}{A_{\rm out}}
\newcommand{\cscatout}{c_{\rm out}}
\newcommand{\Ascatin}{A_{\rm in}}
\newcommand{\cscatin}{c_{\rm in}}
\newcommand{\Omegain}{\Omega_{\rm in}}
\newcommand{\Omegaout}{\Omega_{\rm out}}
\newcommand{\uout}{u_{\rm out}}
\newcommand{\uin}{u_{\rm in}}

\newcommand{\supp}{{\rm supp}}

\newcommand{\abs}[1]{{\left\lvert{#1}\right\rvert}}
\newcommand{\norm}[1]{{\left\lVert{#1}\right\rVert}}

\newcommand{\CPF}{{C_{\rm PF}}}

\newcommand{\Cres}{{C_{\rm res}}}

\newcommand{\Ccont}{{C_{\rm cont}}}
\newcommand{\Ccoer}{{C_{\rm coer}}}
\newcommand{\ccoer}{{c_{\rm coer}}}

\newcommand{\Cemb}{C_{\rm emb}}

\newcommand{\Ccom}{C_{\rm com}}
\newcommand{\Cell}{C_{\rm ell}}

\newcommand{\Cpw}{C_{\rm pw}}
\newcommand{\Cloc}{C_{\rm loc}}
\newcommand{\Capprox}{C_{\rm approx}}
\newcommand{\Ckappa}{\kappa}
\newcommand{\Cppw}{C_{\rm ppw}}
\newcommand{\Cp}{p}
\newcommand{\Cdagger}{C_\dagger}
\newcommand{\Csuper}{C'_{\rm super}}
\newcommand{\Csuperk}{{C_{\rm super}}}
\newcommand{\Cinv}{C'_{\rm inv}}
\newcommand{\Cinvk}{{C_{\rm inv}}}
\newcommand{\Cstar}{C_*}
\newcommand{\Cqu}{C_{\rm qu}}
\newcommand{\Cca}{C_{\rm ca}}

\newcommand{\mythmname}[1]{\textbf{\emph{(#1)}}}


\DeclareMathOperator{\Id}{Id}







\newcommand{\tr}{{\rm tr}}


\newcommand{\Amin}{A_-}

\newcommand{\Rscat}{R_{\rm scat}}

\newcommand{\RPMLo}{R_{\rm PML, -}}
\newcommand{\RPMLt}{R_{\rm PML, +}}



\newcommand{\Rtr}{R_{\tr} }


\makeatletter
\newcommand{\settheoremtag}[1]{
  \let\oldthetheorem\thetheorem
  \renewcommand{\thetheorem}{#1}
  \g@addto@macro\endtheorem{
    \addtocounter{theorem}{-1}
    \global\let\thetheorem\oldthetheorem}
  }
\makeatother

\usetikzlibrary{positioning}
\usepackage{lscape}

\definecolor{jeffColor}{RGB}{102, 0, 204}

\newcommand{\euanspace}{, \,}

\newcommand{\Ot}{\Omega_{\rm p}}
\newcommand{\OI}{\Omega_-}
\newcommand{\Gt}{\Gamma_{\rm p}}
\newcommand{\GI}{\Gamma_-}

\newcommand{\Zjdual}[3]{\norm{#1}_{(\mathcal{Z}_k^{#2,<}(#3))^*}}
\newcommand{\Zjdualintro}[3]{\norm{#1}_{(H_k^{#2,<}(#3))^*}}
\newcommand{\Wjdual}[3]{\norm{#1}_{(\mathcal{W}_k^{#2,<}(#3))^*}}
\newcommand{\Zj}[3]{\norm{#1}_{\mathcal{Z}_k^{#2}(#3)}}
\newcommand{\Wj}[3]{\norm{#1}_{\mathcal{W}_k^{#2}(#3)}}
\newcommand{\newell}{s}
\newcommand{\Omegasource}{\Omega_{\square}}
\newcommand{\Cover}{C_{\rm over}}

\pagenumbering{arabic}
\title{
Helmholtz FEM solutions are locally quasi-optimal modulo low frequencies}

\author{
M.~Averseng\thanks{Department of Mathematical Sciences, University of Bath, Bath, BA2 7AY, UK, \tt M.Averseng@bath.ac.uk },
\,\,
J.~Galkowski\thanks{Department of Mathematics, University College London, 25 Gordon Street, London, WC1H 0AY, UK,   \tt J.Galkowski@ucl.ac.uk},
\,\, E.~A.~Spence\thanks{Department of Mathematical Sciences, University of Bath, Bath, BA2 7AY, UK, \tt E.A.Spence@bath.ac.uk }
}
\date{}

\begin{document}
\pagenumbering{arabic}

\maketitle

\begin{abstract}

For $h$-FEM discretisations of the Helmholtz equation with wavenumber $k$, we obtain $k$-explicit analogues of the classic local FEM error bounds of \cite{NiSc:74}, \cite[\S9]{Wa:91}, \cite{DeGuSc:11}, showing that these bounds hold with constants independent of $k$, provided one works in Sobolev norms weighted with $k$ in the natural way.

We prove two main results: 
(i) a bound on the local $H^1$ error by the best approximation error plus the $L^2$ error, both on a slightly larger set,  and 
(ii) the bound in (i) but now with the $L^2$ error replaced by the error in a negative Sobolev norm.
The result (i) is valid for shape-regular triangulations, and is the $k$-explicit analogue of the main result of \cite{DeGuSc:11}. 
The result (ii) is valid when the mesh is locally quasi-uniform on the scale of the wavelength (i.e., on the scale of $k^{-1}$) and is the $k$-explicit analogue of the results of \cite{NiSc:74}, \cite[\S9]{Wa:91}. 

Since our Sobolev spaces are weighted with $k$ in the natural way, the result (ii) indicates that the Helmholtz FEM solution is 
locally quasi-optimal modulo low frequencies (i.e., frequencies $\lesssim k$).
Numerical experiments confirm this property, and also highlight interesting propagation phenomena in the Helmholtz FEM error.

\

\noi\textbf{AMS subject classifications:} 35J05, 65N15, 65N30, 78A45\\
\noi\textbf{Keywords:} Finite Element Method, Helmholtz equation

\end{abstract}

%

\section{Introduction:~the main result in a simple setting}\label{sec:intro}

\subsection{The PML approximation to the Helmholtz exterior Dirichlet problem and its FEM discretisation.}\label{sec:introHelmholtz}

Let $\Omega_-\subset\Rea^d$ be a bounded Lipschitz open set with its open complement $\Omega_+ := \Rea^d\setminus \overline{\Omega_-}$ connected. 
Let $\widetilde{u}$ 
be the solution of the  variable-coefficient exterior Dirichlet problem for the  Helmholtz equation 
\[
-k^{-2}\nabla \cdot \big(A_{\rm scat}(x) \nabla \widetilde u(x)\big) - (c_{\rm scat}(x))^{-2} \widetilde u(x) = g(x) \quad \textup{in } \Omega_+,
\qquad
\widetilde{u}=0 \quad\ton\partial \Omega_+
\]
satisfying the Sommerfeld radiation condition, and with the supports of $g$, $I- A_{\rm scat}$, and $1-c_{\rm scat}$ compact. Let $u\in H^1_0(\Omega)$ be the radial perfectly-matched-layer (PML) approximation to $\widetilde{u}$, where $\Omega$ is the truncated domain; i.e., $u$ is the solution to the
variational problem 
\begin{equation}
	\label{eq:varfEDP}
	\textup{find } u \in H^1_0(\Omega) \textup{ such that } a(u,v) = G(v)\tfa  v \in H^1_0(\Omega),
\end{equation}
where $a$ is the sesquilinear form given by 
\[a(u,v) = \int_{\Omega} k^{-2} A\nabla u \cdot \overline{\nabla v} - c^{-2}u\overline{v}\,,\]
$G(v) = \int_{\Omega} g \overline{v}$, and the coefficients $A$ and $c$ are defined  in \S\ref{sec:PML} in terms of the PML scaling function and (respectively) $A_{\rm scat}$ and $c_{\rm scat}$.

We consider the Galerkin discretisation of \eqref{eq:varfEDP} 
using the standard conforming Lagrange finite element spaces $\{V_h\}_{h > 0}$ 
of continuous piecewise polynomials of degree $p$ on a family of shape-regular triangulations $(\mathcal{T}_h)_{h > 0}$ of $\Omega$; i.e., 
\begin{equation}
	\label{eq:varfEDPh}
	\textup{find } u_h \in V_h\textup{ such that } a(u_h,v_h) = G(v_h) \quad \tfa v_h \in V_h.
\end{equation}
Subtracting \eqref{eq:varfEDPh} from \eqref{eq:varfEDP} we find that Galerkin orthogonality holds:
	\begin{equation}
		\label{eq:GOGintro}
		a(u - u_h,v_h) = 0 \quad \tfa v_h \in V_h.
	\end{equation}

\subsection{First result:~bound on the local FEM error in $H^1_k$ with an $L^2$ error term.}

Given two subsets $\Omega_0 \subset \Omega_1 \subset \Omega$, let
\begin{equation}
	\label{eq:defDistInf}
	\partial_<(\Omega_0,\Omega_1) := \textup{dist}\big(\partial \Omega_0 \setminus \partial \Omega, \partial \Omega_1 \setminus \partial \Omega\big),
\end{equation}
with the convention $\partial_<(\Omega_0,\Omega_1) = +\infty$ when $\Omega_1 =  \Omega$; see Figure \ref{fig:distance}.
Working with this notion of distance allows us to consider subdomains that
go up to the boundary.

\begin{figure}[h]
\begin{center}
\begin{tikzpicture}

\def\radiusA{2cm}
\def\radiusB{1.5cm}
\def\radiusC{1cm}

\def\startangle{-70}
\def\endangle{70}
\begin{scope}[scale=2,rotate=90]

\begin{scope}

\coordinate (O1) at (0,0);
\coordinate (O2) at (2cm,0);
\coordinate (O3) at (2cm,0);

\draw[name path=circleA]  (O1) ++(\startangle:\radiusA) arc (\startangle:\endangle:\radiusA);

    \clip   (O1) ++(-90:\radiusA) arc (-90:90:\radiusA)-- (O1)++(-90:\radiusA);

    \draw[name path=circleB, fill=lightgray] (O2) circle (\radiusB);
            \filldraw[name path=circleC,pattern=north east lines] (O3) circle (\radiusC);
            
     \draw (O2) ++ (135:1.9cm)node[below] {$\Omega$};
     \draw (O2) ++ (135:1.2cm)node[below] {$\Omega_1$};

      \draw[->] (.9,0) node[right]{$\partial_{<}(\Omega_0,\Omega_1)$}--(1,0);
       \draw[->] (.9,0) --(.5,0);
            
\end{scope}            
      \draw[->]  (\radiusA+4,0)node[above] {$\Omega_0$} --(\radiusA-4,0);
\end{scope}




\end{tikzpicture}
\caption{\es{Illustration of the distance $\partial_<(\Omega_0,\Omega_1)$ defined by \eqref{eq:defDistInf}, with $\Omega_0$ hatched and $\Omega_1$ shaded.}}
\label{fig:distance}
\end{center}
\end{figure}

The Sobolev norms $\|\cdot\|_{H^s_k(D)}$ for $D$ a bounded Lipschitz domain are defined as for $\|\cdot\|_{H^s(D)}$ (via restriction of $\|\cdot\|_{H^s(\Rea^d)}$ to $D$, with this second norm defined by the Fourier transform), except that now we weight the $j$th derivative with $k^{-j}$; see \S\ref{sec:norms}.

\begin{theorem}[Local quasioptimality in $H^1_k$ up to an $L^2$ error term]
\label{thm:intro1}
Suppose that $A$ and $c$ are $L^\infty(\Omega)$ and the PML scaling function $f_\theta$ (defined in \S\ref{sec:PML}) is $W^{1,\infty}(\Omega)$. Given $C_0>0$ there exists $C_1,\Cstar>0$ such that the following is true.
Let $\Omega_0\subset\Omega_1\subset \Omega$ be such that $\Omega_0\neq \Omega_1$,
\beq\label{eq:intro_condition}
 d:=\partial_<(\Omega_0,\Omega_1)
  \geq \frac{C_0}{k},
 \quad\tand \quad 
 \max_{K\cap \Omega_1\neq \emptyset} h_K \leq \frac{C_1}{k}.
 \eeq
Given $k>0$, let $u \in H^1_0(\Omega)$ and $u_h \in V_h$ satisfy the Galerkin orthogonality \eqref{eq:GOGintro}.
Then 
	\begin{align}
		&\norm{u - u_h}_{H^1_k(\Omega_0)} \leq \Cstar		\Big(\min_{w_h \in V_h} 
		\norm{u - w_h}_{H^1_k(\Omega_1)} 
		+ \norm{u - u_h}_{L^2(\Omega_1)} 
		\Big).\label{eq:intro1}
	\end{align}
\end{theorem}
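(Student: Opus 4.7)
My plan is to adapt the classical Nitsche--Schatz local duality argument to the Helmholtz setting, making every constant $k$-explicit by systematically working in the wavelength-weighted norms $H^s_k$. The key observation that powers the whole argument is that when $\partial_<(\Omega_0,\Omega_1) \geq C_0/k$, one can construct cutoffs $\chi$ with $k^{-|\alpha|}|\partial^\alpha \chi| \lesssim 1$, so derivatives of the cutoff are harmless in the $k$-weighted norms.

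First I would split the error. Let $w_h \in V_h$ be the minimiser (or a near-minimiser) in \eqref{eq:intro1}, and write $e := u-u_h = \eta - \phi$ with $\eta := u - w_h$ and $\phi := u_h - w_h \in V_h$. The contribution of $\eta$ to $\|e\|_{H^1_k(\Omega_0)}$ is already the first term on the right-hand side of \eqref{eq:intro1}, so it remains to estimate $\|\phi\|_{H^1_k(\Omega_0)}$. To do so I introduce a chain of nested open sets $\Omega_0 \subset D_0 \subset D_1 \subset \cdots \subset D_N \subset \Omega_1$ with $\partial_<(D_j, D_{j+1}) \geq d/(2N)$, and cutoff functions $\chi_j \in C_c^\infty(D_{j+1})$ with $\chi_j \equiv 1$ on $D_j$ and $k^{-|\alpha|}\|\partial^\alpha \chi_j\|_\infty \lesssim_{N,C_0} 1$.

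The main local estimate at each level $j$ proceeds as follows. Decompose $a = \tilde{a} - 2(c^{-2}\cdot,\cdot)_{L^2}$ where $\tilde{a}(v,w) := \int k^{-2} A \nabla v \cdot \overline{\nabla w} + c^{-2}v\overline{w}$ is coercive on $H^1_k$ with constants independent of $k$ (under the standing positivity assumptions on $\Real A$ and $c$). The product-rule identity
\[
\Real\, \tilde{a}(\chi_j \phi, \chi_j \phi) = \Real\, \tilde{a}(\phi, \chi_j^2 \phi) + \Real \int k^{-2} A\, |\nabla \chi_j|^2 |\phi|^2
\]
rewrites the coercive square in terms of a bilinear pairing, with a commutator remainder of size $\lesssim \|\phi\|_{L^2(D_{j+1})}^2$ since $|\nabla \chi_j| \lesssim k$. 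Converting back to $a$, the key quantity is $\Real\, a(\phi, \chi_j^2 \phi)$. Since $\phi \in V_h$, Galerkin orthogonality gives $a(\phi, v_h) = a(\eta, v_h)$ for all $v_h \in V_h$, so with $\tau := \chi_j^2 \phi - I_h(\chi_j^2 \phi)$ (Scott--Zhang interpolant):
\[
a(\phi, \chi_j^2\phi) = a(\phi, \tau) + a(\eta, \chi_j^2 \phi - \tau).
\]
Boundedness of $a$ on $H^1_k$ and stability of $I_h$ let me bound the second piece by $\|\eta\|_{H^1_k(\Omega_1)}\|\phi\|_{H^1_k(\Omega_1)}$. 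For the first piece, $k$-explicit interpolation combined with the inverse inequality on $V_h$ yields $\|\tau\|_{H^1_k} \lesssim (hk)\|\phi\|_{H^1_k(D'_{j+1})}$ on a slightly enlarged set. Collecting everything with Young's inequality,
\[
\|\phi\|_{H^1_k(D_j)}^2 \leq C(hk)\|\phi\|_{H^1_k(D'_{j+1})}^2 + C\|\eta\|_{H^1_k(\Omega_1)}^2 + C\|\phi\|_{L^2(\Omega_1)}^2.
\]

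To close the argument I iterate this inequality $N$ times across the chain. Choosing the threshold $C_1$ in the hypothesis $hk \leq C_1$ small enough in terms of $N$ (and hence $C_0$), the prefactor $(C(hk))^N$ becomes small, so after $N$ steps the residual $\|\phi\|_{H^1_k(D_N)}^2 \leq \|\phi\|_{H^1_k(\Omega_1)}^2$ is multiplied by a small constant and can be absorbed into the error bound. A triangle inequality $\|\phi\|_{L^2(\Omega_1)} \leq \|\eta\|_{L^2(\Omega_1)} + \|e\|_{L^2(\Omega_1)} \leq \|\eta\|_{H^1_k(\Omega_1)} + \|e\|_{L^2(\Omega_1)}$ converts the $\phi$-norm to the claimed $e$-norm, and combining with $\|\eta\|_{H^1_k(\Omega_0)}$ yields \eqref{eq:intro1}.

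The main obstacle I anticipate is the bookkeeping in Steps 4--5: the interpolation estimate for $\tau$ must be truly $k$-explicit (no hidden factors of $k$ through $\chi_j$ derivatives), and the supports of $I_h(\chi_j^2 \phi)$ must be controlled so the $\eta$-term indeed sits inside $\Omega_1$. Both rely critically on $d \geq C_0/k$ so that $\chi_j$ is ``slow'' on the wavelength scale. Handling the complex PML coefficients (where $A$ is only positive in real part, under the $W^{1,\infty}$ hypothesis on $f_\theta$) requires a modest amount of care in the Gårding step but no essentially new idea.
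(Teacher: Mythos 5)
Your overall strategy (cutoffs with $k$-explicit derivative control, Galerkin orthogonality, superapproximation, a coercive modification of $a$) is in the same family of ideas as the paper's, but the proof as written has two gaps, one of which is serious for the generality claimed in Theorem \ref{thm:intro1}.

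\textbf{The iteration does not close for shape-regular (non-quasi-uniform) meshes.} After iterating your basic inequality $N$ times along the chain, you are left with a residual $\bigl(C(hk)\bigr)^{N}\norm{\phi}^2_{H^1_k(\Omega_1)}$. You claim this can be ``absorbed into the error bound,'' but the left-hand side lives on $D_0\subsetneq\Omega_1$, so there is no absorption; you would need an a priori bound on $\norm{\phi}_{H^1_k(\Omega_1)}$ in terms of $\norm{\phi}_{L^2(\Omega_1)}$. The only available tool is a global inverse inequality for $V_h$ on $\Omega_1$, which scales like $(\min_{K\cap\Omega_1\neq\emptyset}h_K\,k)^{-1}$. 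Theorem \ref{thm:intro1} assumes only shape regularity, so $\min_K h_K/\max_K h_K$ can be arbitrarily small (indeed, Theorem \ref{thm:intro2} is precisely the variant that requires quasi-uniformity on scale $k^{-1}$); hence $\bigl(C(hk)\bigr)^N(\min_K h_K k)^{-2}$ is not controllable, since $N$ is capped at $\cO((hk)^{-1})$ by the requirement $\partial_<(D_j,D_{j+1})\gtrsim h$. The paper avoids iteration altogether: in the discrete Caccioppoli lemma (Lemma \ref{lem:Caccioppoli}), the superapproximation estimate \eqref{eq:saH1k} produces a factor $h_K/d$ \emph{per element}, which is then paired \emph{element by element} with the inverse estimate $\norm{z_h}_{H^1_k(K)}\leq\Cinvk(h_Kk)^{-1}\norm{z_h}_{L^2(K)}$, so $h_K$ cancels and one obtains directly
$\norm{z_h}_{H^1_k(K)}\norm{\chi^2 z_h - w_h}_{H^1_k(K)}\lesssim (kd)^{-2}\norm{z_h}^2_{L^2(K)}+\varepsilon\norm{\chi z_h}^2_{H^1_k(K)}$,
i.e.\ the $L^2$ slush term in a single step. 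That per-element $h_K$-cancellation is exactly what you are missing; applied inside your step 4 it would replace your bound $\norm{\tau}_{H^1_k}\lesssim (hk)\norm{\phi}_{H^1_k}$ by $\norm{\phi}_{H^1_k(K)}\norm{\tau}_{H^1_k(K)}\lesssim\norm{\phi}^2_{L^2(K)}+\varepsilon\norm{\chi\phi}^2_{H^1_k(K)}$, making the iteration unnecessary.

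\textbf{Your coercive form $\tilde a$ is not coercive for PML.} You define $\tilde a(v,w)=\int k^{-2}A\nabla v\cdot\overline{\nabla w}+c^{-2}v\overline w$ so that $a=\tilde a-2(c^{-2}\cdot,\cdot)_{L^2}$, and claim coercivity ``under the standing positivity assumptions on $\Re A$ and $c$.'' For the PML coefficients \eqref{eq:Ac}, $c^{-2}=\alpha\beta^{d-1}$ is complex, and $\Re(\alpha\beta)=1-f_\theta'f_\theta/r$ (in $d=2$) is negative for $\theta>\pi/4$ once $r$ is large enough; so $\Re\tilde a(v,v)$ need not control $\norm{v}^2_{H^1_k}$. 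The fix is to use a constant $L^2$ shift $\tilde a=a+c_0(\cdot,\cdot)_{L^2}$ with $c_0$ chosen from the G\aa rding inequality \eqref{eq:Garding}, or to do what the paper does: prove local coercivity only on $\cO(k^{-1})$ balls via G\aa rding plus Poincar\'e--Friedrichs and then use a covering argument. Your product-rule identity also drops the cross terms involving $\Imag A$ (which do not vanish in the PML region even for symmetric complex $A$); these are controlled precisely by the commutator bound in Assumption \ref{ass:commut}. You flagged ``modest care'' here, and that is fair, but the specific form you wrote is wrong and the fix must be stated explicitly.
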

We make two remarks (which also hold for Theorem \ref{thm:intro2} below).
\ben
\item 
Recall that, for standard finite-element spaces with $p$ fixed, $h_K k$ must be chosen as a decreasing function of $k$ to maintain accuracy (see \cite{GS3} and the references therein); thus the second condition in \eqref{eq:intro_condition} is not restrictive.
\item The assumption that $u$ and $u_h$ satisfy the Galerkin orthogonality \eqref{eq:GOGintro} can be weakened to $u$ and $u_h$ satisfying a local version of Galerkin orthogonality on $\Omega_1$; see \eqref{eq:GOG1} below. 
\een

\subsection{Second result:~bound on the local FEM error in $H^1_k$ with an error term in a negative norm}

Theorem \ref{thm:intro1} holds for all shape-regular meshes $\mathcal{T}_h$ (i.e., the mesh elements are uniformly ``well-shaped"), and thus the mesh can in principle be highly non-uniform. 
However, if the mesh $\mathcal{T}_h$ is \emph{locally quasi-uniform on a scale of $k^{-1}$} (i.e., in every ball of radius proportional to $k^{-1}$, the mesh elements diameters are comparable) \es{and the PDE coefficients and boundary of the domain have sufficient regularity}, then the $L^2$ norm of the error on the right-hand side of \eqref{eq:intro1} can be replaced by a negative Sobolev norm.

\begin{theorem}[Local quasioptimality in $H^1_k$ up to an error term in a negative norm]
\label{thm:intro2}
Suppose that, for some $\ell \in\mathbb{Z}^+$, $A$ and $c$ are $C^{\ell,1}(\overline{\Omega})$ \es{(i.e., their $\ell$th derivatives are Lipschitz)}, the PML scaling function $f_\theta$ (defined in \S\ref{sec:PML}) is $C^{\ell+1,1}(\overline{\Omega})$,
and $\partial \Omega$ is $C^{\ell+1,1}$. 
Given $C_0, \Cqu>0$ there exists $C_1,C_2,\Cstar>0$ such that the following is true.
Let $\Omega_0\subset\Omega_1\subset \Omega$ be such that $\Omega_0\neq \Omega_1$ and 
\eqref{eq:intro_condition} holds.
Assume further that 
$\mathcal{T}$ is {\em quasi-uniform on scale $k^{-1}$}, in the sense that, for every ball $B$ of radius at most $C_2 k^{-1}$,
	\begin{equation}
		\label{eq:AssQuThm}
		\frac{\max_{K\cap B \neq \emptyset} h_K}{\min_{K \cap B \neq \emptyset} h_K} \leq \Cqu.
	\end{equation} 	
Given $k>0$, let $u \in H^1_0(\Omega)$ and $u_h \in V_h$ satisfy \eqref{eq:GOGintro}.
Then	
\begin{align}
&\norm{u - u_h}_{H^1_k(\Omega_0)} \leq \Cstar 
\Big(\min_{w_h \in V_h} \norm{u - w_h}_{H^1_k(\Omega_1)} 
+ \Zjdualintro{u - u_h}{\min\{\ell+1,p\}}{\Omega_1}
\Big).
\label{eq:intro2}
\end{align}
\end{theorem}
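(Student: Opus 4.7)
The plan is to start from Theorem \ref{thm:intro1} and use an Aubin--Nitsche-type duality argument $s := \min\{\ell+1,p\}$ times to successively lower the Sobolev index of the error term on the right-hand side from $L^2 = H^0_k$ down to $(\mathcal{Z}_k^{s,<})^*$. This is the $k$-explicit analogue of the Nitsche--Schatz/Wahlbin scheme, and the new ingredient we need is to keep the constants uniform in $k$; this is where the hypothesis \eqref{eq:AssQuThm} of local quasi-uniformity on the scale $k^{-1}$ enters.

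First I would pick a chain of nested subdomains $\Omega_0 = \Omega^{(0)}\subset\Omega^{(1)}\subset \cdots \subset \Omega^{(s+1)} = \Omega_1$ with $\partial_<(\Omega^{(j)},\Omega^{(j+1)})\gtrsim d/(s+1)$, ensuring by $d\geq C_0/k$ that each separation is $\gtrsim 1/k$ (at the possible cost of shrinking $C_1$). Applying Theorem \ref{thm:intro1} to the pair $(\Omega^{(0)},\Omega^{(1)})$ gives
\begin{equation*}
\norm{u-u_h}_{H^1_k(\Omega^{(0)})}\leq \Cstar\Big(\min_{w_h\in V_h}\norm{u-w_h}_{H^1_k(\Omega^{(1)})}+\norm{u-u_h}_{L^2(\Omega^{(1)})}\Big).
\end{equation*}
To handle the $L^2$ term I need a chain of duality estimates of the form
\begin{equation*}
\Zjdualintro{u-u_h}{j-1}{\Omega^{(j)}} \leq C\Big(\min_{w_h\in V_h}\norm{u-w_h}_{H^1_k(\Omega^{(j+1)})} + \Zjdualintro{u-u_h}{j}{\Omega^{(j+1)}}\Big),
\end{equation*}
for $j=1,\dots,s$; iterating these together with the initial estimate then produces \eqref{eq:intro2}.

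Each duality step is obtained in a standard way: fix $\phi\in\mathcal{Z}_k^{j-1,<}(\Omega^{(j)})$ of norm one, extend by zero, and consider the adjoint PML problem $a(v,\psi_\phi)=(\chi\phi,v)$ for $v\in H^1_0(\Omega)$, where $\chi$ is a $k^{-1}$-scale cutoff equal to $1$ on $\Omega^{(j)}$ and supported in a neighbourhood of $\Omega^{(j)}$ still contained in $\Omega^{(j+1)}$. The smoothness hypotheses on $A$, $c$, $f_\theta$, and $\partial\Omega$ give adjoint elliptic regularity with $k$-explicit constants (from the standard $k^{-j}$-weighted bootstrap), yielding $\psi_\phi\in H^{j+1}_k$ on a slightly smaller set together with a $k$-uniform bound. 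Inserting the Galerkin orthogonality \eqref{eq:GOGintro} (or its localised version from Remark 1), pairing $u-u_h$ against $\psi_\phi - I_h\psi_\phi$ for a Scott--Zhang interpolant, and applying a second $k^{-1}$-scale cutoff, one reduces to standard approximation estimates plus the super-approximation property $\norm{I_h(\chi v_h)-\chi v_h}_{H^1_k}\lesssim (hk)\norm{v_h}_{L^2}$; this last inequality requires the mesh to be uniform on the scale on which the cutoff $\chi$ varies, which is exactly what \eqref{eq:AssQuThm} guarantees (with $C_2$ chosen so that the cutoff supports used in each of the $s$ duality steps are contained in balls of radius $\leq C_2/k$).

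The main obstacle is to carry out the duality estimate with constants independent of $k$. Three things have to align: (a) the adjoint PML problem must give $H^{j+1}_k$-regularity with $k$-uniform constant on the relevant intermediate domain — this forces the regularity hypotheses on $A$, $c$, $f_\theta$, $\partial\Omega$ and bounds the number of usable duality steps by $\ell+1$ (hence the $\min\{\ell+1,p\}$ in the statement); (b) the interpolant must satisfy a $k$-explicit super-approximation estimate against cutoffs that oscillate at scale $k^{-1}$, which is where \eqref{eq:AssQuThm} is indispensable; and (c) the various cutoff losses and domain shrinkages must fit inside the budget $d\gtrsim 1/k$ of geometric separation, which is why the nested chain is built with $s+1$ layers of thickness $\gtrsim 1/k$ and why $C_1$ may need to be reduced depending on $C_0$. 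Once these three pieces are in place, telescoping the $s$ duality inequalities with the $j=0$ estimate from Theorem \ref{thm:intro1} yields \eqref{eq:intro2}.
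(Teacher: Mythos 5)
Your plan captures the right macroscopic ingredients (nested subdomains separated by $\gtrsim k^{-1}$, iterated duality to lower the Sobolev index, local elliptic regularity, quasi-uniformity on scale $k^{-1}$ to obtain $k$-uniform discrete inverse estimates), but it has a gap at its core: the adjoint problem you invoke is \emph{global}. You set up $a(v,\psi_\phi) = (\chi\phi,v)$ for $v \in H^1_0(\Omega)$ and assert that $\psi_\phi$ satisfies ``a $k$-uniform bound''. For the Helmholtz/PML sesquilinear form on the whole truncated domain $\Omega$ this is false in general: before any elliptic-regularity shift can kick in, the $H^1_k$ norm of $\psi_\phi$ is controlled by the global solution-operator norm, which grows with $k$ (polynomially for nontrapping geometries, exponentially for trapping ones). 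Bringing this constant into the chain of duality inequalities would destroy the $k$-uniformity that is the entire point of the theorem. The paper's Lemma \ref{lem:CaccioppoliNegative2} and Corollary \ref{cor:resolvent} avoid this by posing the adjoint resolvent $\mathcal{R}^*$ only on balls $U_1$ with $r+d \leq \ccoer k^{-1}$, where the sesquilinear form is genuinely coercive by \eqref{eq:coercivity1} (a Poincar\'e--Friedrichs argument). Local coercivity gives Lax--Milgram well-posedness for $\mathcal{R}^*$ with a $k$-independent constant; this localisation of the duality argument is the key idea your plan is missing.

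A second, more structural, difference from the paper's proof: you iterate a bound of the form $\Zjdualintro{u-u_h}{j-1}{\Omega^{(j)}} \lesssim \min_{w_h}\norm{u-w_h}_{H^1_k(\Omega^{(j+1)})} + \Zjdualintro{u-u_h}{j}{\Omega^{(j+1)}}$ directly on $u-u_h$, picking up a best-approximation term at each level. But the step that converts the factor $(hk)^{j+1}$ gained from approximating the adjoint solution into the negative norm $\norm{\cdot}_{H^{-(j+1)}_k(K)}$ uses the elementwise inverse inequality \eqref{eq:iiH1k2}, which applies only to \emph{discrete} functions. Since $u-u_h \notin V_h$, you cannot apply it directly; each iteration would require a fresh discrete/non-discrete decomposition, and it is not spelled out how the resulting bookkeeping closes. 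The paper sidesteps this by decomposing $u - u_h = (\chi u - \Pi_h(\chi u)) + z_h$ \emph{once} at the outset (in Lemma \ref{lem:WDGS2}, mirroring Lemma \ref{lem:WDGS}), isolating a purely discrete $z_h$ that still satisfies the local Galerkin orthogonality, and then running the entire bootstrap (Lemma \ref{lem:CaccioppoliNegative2}) on $z_h$ alone. Relatedly, the quasi-uniformity hypothesis \eqref{eq:AssQuThm} is used precisely at this inverse-inequality step, to replace $\max_K h_K$ by $h_K$ uniformly over the patch (see \eqref{eq:temp}--\eqref{eq:temp2}); it is not primarily needed for the super-approximation estimate, which already accommodates nonuniform $h_K$ via the local form \eqref{eq:saH1k}.
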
 

Note that meshes satisfying \eqref{eq:AssQuThm} can be highly non-uniform on $\Omega$; indeed, the ratio between the largest and smallest mesh elements on an $\cO(1)$ scale can be proportional to $\exp(\alpha k)$ for some $\alpha\in \Rea$ (depending on $\Cqu$).

We now define the norm $\Zjdualintro{\cdot}{s}{\Omega_1}$ appearing in the last term on the right-hand side of \eqref{eq:intro2}, but highlight that when $\Omega_1$ is an interior subset of $\Omega$, $\Zjdualintro{\cdot}{s}{\Omega_1}$ is equivalent to $\|\cdot\|_{H_k^{-s}(\Omega_1)}$. For $s\geq 0$ and $D\subset \Omega$ Lipschitz, let 
\beqs
H^{s,<}(D):= \overline{\Big\{
v \in H^1_0(\Omega)\,:\, v|_D\in H^s(D), \,
\supp \,v \subset \overline{D},\, \partial_< (\supp\, v, D)>0
\Big\}}
\eeqs
(where the closure is taken with respect to the $H^s$ norm) and 
\beqs
\Zjdualintro{v}{s}{D}:= \sup_{w \in H^{s,<}(D),\, \|w\|_{H^s_{\es{k}}(D)}=1}
\big| \es{v(w)}
\big|.
\eeqs
When $\partial D \cap \partial \Omega= \emptyset$ (i.e., $D$ is an interior subset of $\Omega$), $\Zjdualintro{\cdot}{s}{D}$ is equivalent to $\|\cdot\|_{H_k^{-s}(D)}$ by \eqref{eq:neg_norm_intro} (see also \cite[Equation 9.18]{Wa:91}).
When $\partial D\cap \partial\Omega\neq \emptyset$, 
\beqs
H_0^s(D) \subset H^{s,<}(D) \subset H^s(D)\cap H^1_0(D)
\eeqs
(where $H_0^s(D)$ is the closure of $C^\infty_{\rm comp}(D)$ in $H^s(D)$), and so 
\beqs
\N{v}_{H^{-s}_k(D)} \leq \Zjdualintro{v}{s}{D}\leq C\max \big\{ \N{v}_{H^{-1}_k(D)}, \N{v}_{\widetilde{H}^{-s}_k(D)}\big\}.
\eeqs
To informally understand the differences between these norms, we note that $\N{\cdot}_{H^{-s}_k(D)}$ doesn't ``see'' the boundary of $D$, $\N{v}_{\widetilde{H}^{-s}_k(D)}$ sees the boundary of $D$, and $\Zjdualintro{v}{s}{D}$ sees only the parts of $\partial D$ that coincide with $\partial\Omega$.

\subsection{The relationship of Theorems \ref{thm:intro1} and \ref{thm:intro2} to other results in the literature.}

Estimates on the local FEM error for second-order linear elliptic PDEs were pioneered by Nitsche and Schatz in \cite{NiSc:74}; see also \cite{De:75}, \cite{ScWa:77}, \cite{ScWa:82}, \cite[Chapter 9]{Wa:91}, and \cite{DeGuSc:11}. 
These arguments use that the sesquilinear forms of second-order linear elliptic PDEs are coercive (i.e., sign definite) on sufficiently-small balls. This property is used to prove, again on small balls, a discrete analogue of the classic Caccioppoli estimate (bounding the $H^1$ norm of the PDE solution in terms of the $L^2$ norm and the data on a slightly larger set). The Caccioppoli estimate is the main ingredient required to prove a bound of the form \eqref{eq:intro1} on small balls. A covering argument is then used to obtain the bound on an arbitrary domain from the bound on small balls. These arguments combined with a duality argument and elliptic regularity then produce a bound of the form \eqref{eq:intro2}. 

Although these classic results apply to the Helmholtz equation, they don't use norms weighted with $k$ and the constants in the bounds are not explicit in $k$. The main motivation for the present paper was to obtain the analogues of the results in \cite{NiSc:74}, \cite[Chapter 9]{Wa:91}, and \cite{DeGuSc:11} applied to the Helmholtz equation in $k$-weighted norms, and with constants explicit in $k$. 
Roughly speaking, we show that the results of \cite{NiSc:74}, \cite[Chapter 9]{Wa:91}, and \cite{DeGuSc:11} hold for the Helmholtz equation with constants independent of $k$, provided that one works in $k$-weighted norms. In more detail,
\bit
\item
Theorem \ref{thm:intro1} is a $k$-explicit version of \cite[Theorem 3.4]{DeGuSc:11}, with both proved under the assumption that the mesh is shape-regular. This result (in non $k$-explicit form) for quasi-uniform meshes appears as \cite[Theorem 4]{De:75}, \cite[Theorem 4.1]{ScWa:82}, and \cite[Theorem 9.1]{Wa:91}.
\item  Theorem \ref{thm:intro2} is, roughly-speaking, a $k$-explicit version of 
 \cite[Theorem 5.1(i)]{NiSc:74} and \cite[Theorem 9.2]{Wa:91}. 
The differences are 
\bit
\item in 
\cite[Theorem 5.1(i)]{NiSc:74} the best approximation error on $\Omega_1$ (i.e., the first term on the right-hand sides of \eqref{eq:intro1} and \eqref{eq:intro2}) is estimated by the standard polynomial approximation result (\eqref{eq:ap} below) and the subdomains are assumed not to touch the boundary, and 
\item the results in \cite{NiSc:74} and \cite[Chapter 9]{Wa:91} are geared towards quasi-uniform meshes (see \cite[Assumption A3]{NiSc:74}, 
\cite[Equation 9.6]{Wa:91}) whereas Theorem \ref{thm:intro2} requires quasi-uniformity only on the scale of the wavelength (i.e., on a scale of $k^{-1}$).
\eit
\eit

An additional difference between the results of the present paper and  existing results is that we cover Helmholtz transmission problems, i.e., those with discontinuous $A_{\rm scat}$ and $c_{\rm scat}$, and \es{the analogue of} Theorem \ref{thm:intro2} in this context appears to be new (independent of the $k$-explicitness); indeed, \cite{NiSc:74, De:75} consider second order linear elliptic PDEs with smooth coefficients and \cite{ScWa:82, Wa:91} cover Poisson's equation.

\subsection{Interpreting the results of Theorems \ref{thm:intro1} and \ref{thm:intro2}.}\label{sec:interpretation}

The standard interpretation of the non-$k$-explicit versions of the bounds \eqref{eq:intro1} and \eqref{eq:intro2} is that the FEM solution is locally quasi-optimal up to a lower-order term that allows error to propagate into $\Omega_0$ from the rest of the domain. (This second term is sometimes called the ``pollution'' or ``slush'' term in the literature; later in the paper we refer to it as the ``slush'' term to avoid confusion with the pollution effect for the Helmholtz $h$-FEM.)

The fact that \eqref{eq:intro1} and \eqref{eq:intro2} are proved in $k$-weighted norms with $k$-independent constants allows this interpretation to be refined in the Helmholtz context to \emph{Helmholtz FEM solutions are locally quasi-optimal modulo low frequencies}, where ``low frequencies'' here means ``frequencies $\leq C k$ for some $C>1$''.

We now show (albeit heuristically) how this property can be inferred from the bounds \eqref{eq:intro1} \eqref{eq:intro2}, with this property illustrated by numerical experiments in \S\ref{sec:numerical}. 

The key point is that a bound on a high $k$-weighted Sobolev norm of a function in terms of a low $k$-weighted Sobolev norm, with the constant independent of $k$, implies that the function is controlled by its 
frequencies $\lesssim k$. This is illustrated by the following simple lemma. 
\es{
This lemma uses the $k$-weighted Fourier transform $\mathcal F_k$, defined by \eqref{eq:FT} below, with Fourier variable $\xi$. The weighting by $k$ implies that ``low frequencies'' are now $\{\xi: |\xi| \leq C\}$ for some $C>1$.}

\ble\mythmname{Bound on high Sobolev norm by low Sobolev norm implies function controlled by its low frequencies}
\label{lem:freq}
Suppose a family of functions $(f(k))_{k>0}$ is such that $f(k) \in H^1_k(\Rea^d)$ and there exists $C_2>0$ such that given $s>0$ there exists $C_1>0$ such that 
\beq\label{eq:model1}
\N{f}^2_{H^1_k(\Rea^d)}\leq C_1\Big( C_2 + \N{f}^2_{H^{-s}_k(\Rea^d)}\Big)\tfa k\geq k_0.
\eeq
If 
\beq\label{eq:freqCond}
\frac{C_1}{\es{\langle}R\es{\rangle}^{2(s+1)}}\leq \frac{1}{2},
\eeq
then, for all $k\geq k_0$,
\beqs
\bigg(\frac{k}{2\pi}\bigg)^d\int_{\Rea^d} \langle\xi\rangle^{2} |\mathcal{F}_k f(\xi)|^2 \, \rd \xi 
\leq 
2 C_1
\Bigg( C_2 + \bigg(\frac{k}{2\pi}\bigg)^d\int_{|\xi|\leq R} \langle \xi\rangle^{-2s} |\mathcal{F}_k f(\xi)|^2 \, \rd \xi \Bigg).
\eeqs
\ele

In the ``ideal'' situation that $C_1 = \es{\langle}C\es{\rangle}^{2(s+1)}$, \eqref{eq:freqCond} can be satisfied by taking $R> C$ and $s$ large; we call this the ``ideal" situation, since \eqref{eq:model1} holds with this value of $C_1$ (and $C_2=0$) when $|\mathcal{F}_k f(\xi)|= 0$ for $\es{|} \xi\es{|} \geq C$.

\

\bpf[Proof of Lemma \ref{lem:freq}]
By the definition of $\|\cdot\|_{H^s_k(\Rea^d)}$ \eqref{eq:Sobolev}, \es{\eqref{eq:model1} implies that}
\begin{align*}
&\int_{\Rea^d} \langle \xi\rangle^{2} |\mathcal{F}_k f(\xi)|^2 \, \rd \xi \\
&\qquad\leq C_1 \bigg(  \bigg(\frac{2\pi}{k}\bigg)^d C_2  + 
\int_{|\xi|\leq R} \langle \xi\rangle^{-2s} |\mathcal{F}_k f(\xi)|^2 \, \rd \xi
+\frac{1}{\es{\langle}R\es{\rangle}^{2(s+1)}}\int_{|\xi|\geq R} \langle \xi\rangle^{2} |\mathcal{F}_k f(\xi)|^2 \, \rd \xi \bigg),
\end{align*}
and the result follows.
\epf

\

The bound \eqref{eq:intro2} is conceptually similar to the setting of Lemma \ref{lem:freq} of a high Sobolev norm of $u-u_h$ being bounded by one of its low Sobolev norms, except that (i) 
the norm on the right-hand side is over a slightly larger domain ($\Omega_1$ vs $\Omega_0$) 
(ii) the order of the negative norm (i.e., $s$ in \eqref{eq:model1}) cannot be arbitrarily large.

Nevertheless we expect from \eqref{eq:intro2} that 
$u-u_h$ is controlled locally by the local best approximation error and the low frequencies of $u-u_h$; i.e., the Galerkin solution is locally quasi-optimal, modulo low frequencies.

\subsection{Outline of the paper.}

Section \ref{sec:numerical} contains numerical experiments illustrating 
local quasi-optimality modulo low frequencies of Helmholtz FEM solutions.

Section \ref{sec:assumptions} describes a general framework, with Section \ref{sec:examples} then showing how a variety of Helmholtz problems fit in this framework (including the exterior Dirichlet problem and transmission problem discussed above). 
 
  Section \ref{sec:statement} states the main results applied to the general framework, with Theorem \ref{thm:WDGS1} the generalisation of Theorem \ref{thm:intro1} and Theorem \ref{thm:WDGS2} the generalisation of Theorem \ref{thm:intro2}

Section \ref{sec:proofs} proves Theorems \ref{thm:intro1} and \ref{thm:intro2}, with Section \ref{sec:Caccioppoli} proving auxiliary results (Caccioppoli estimates) used in the proofs.

The appendix (\S\ref{sec:norms}) recaps the definitions of Sobolev spaces weighted by $k$.

\section{Numerical experiments illustrating local quasi-optimality modulo low frequencies of Helmholtz FEM solutions}
\label{sec:numerical}

This section presents numerical experiments where the error in the Helmholtz FEM solution behaves differently in different subsets of the domain due to either 
\ben
\item a non-uniform mesh, see \S\ref{sec:num:mesh},  or 
\item the solution being zero in some part of the domain (so that the local best approximation error is immediately zero in this part of the domain), see \S\ref{sec:num:source}. 
\een
These situations are manufactured so that each of the two terms on the right-hand side of \eqref{eq:intro2} is dominant in a different subset of the domain. 

Our ultimate aim is to study the local behaviour of the FEM error for scattering problems. 
However, a proper understanding of this situation requires understanding the $k$-dependence of the two terms on the right-hand side of \eqref{eq:intro2} as a function of the position of $\Omega_1$ relative to the scatterer; this is work in progress and will be reported elsewhere. 

In all the experiments, the Galerkin equations \eqref{eq:varfEDPh} are formulated with the software FreeFem++ \cite{He:12} using continuous Lagrange elements of degree $p = 1,\ldots, 4$. The resulting linear systems are then solved using the parallel domain decomposition toolbox HPDDM. The code used to produce these numerical results is available at \url{https://github.com/MartinAverseng/local_qo_experiments}.

In the experiments we compute ``low'' and ``high" frequency components of the FEM error; how we do this is described in \S\ref{sec:lowPassFilter} below, with the ``low'' and ``high'' frequencies corresponding to the components of the solution with the absolute value of the \es{(unweighted)} Fourier variable $\leq 2k$ and $\geq 2k$, respectively.

\subsection{Experiments with a non-uniform mesh}\label{sec:num:mesh}

We solve the interior impedance problem in a rectangular domain $\Omega = [0,2.1]\times[0,1]$, i.e., 
\beqs
	(k^{-2}\Delta+1) u =0 \quad \tin \Omega \quad\tand\quad k^{-1}\partial_n u - \ri u = g \quad\ton \partial\Omega,
\eeqs
with data $g$ is chosen so that the exact solution is the plane wave
$u = \exp\left(\ri k (\cos(\theta) x + \sin(\theta)y)\right)$.

This problem falls into the class of Helmholtz problems described in \S\ref{sec:scattering}, with 
the sesquilinear form given by \eqref{eq:sesqui2} with $\Ascat\equiv I$, $\cscat\equiv1$, $\Omega_-=\emptyset$ (no impenetrable obstacle), $\Ot=\emptyset$ (no penetrable obstacle), and $\Omega_{\tr}= \Omega$.

\paragraph{Experiment 1.} We consider two different mesh sizes $h_1> h_2$ and consider the following three different meshes on $\Omega$:
\bit
\item a globally-uniform mesh of size $h_1$, 
\item a globally-uniform mesh of size $h_2$, 
\item a mesh with 
sizes $h_1$ in the left-hand square $[0,1]\times[0,1]$, $h_2$ in the right-hand square $[1.1,2.1]\times[0,1]$, and some non-constant mesh size in the transition region $[1,1.1]\times[0,1]$.\footnote{More specifically, the mesh is designed by first meshing the boundaries of the two squares and transition region, which is partitioned into a total of $10$ segments. On the $4$ segments bounding the left-hand (respectively right-hand) square, the mesh size is uniform equal to $h_1$ (respectively $h_2$) and on the two segments on top and bottom of the transition region, the mesh size is constant equal to $\sqrt{h_1h_2}$. The command {\tt buildmesh} from FreeFem++ then generates a mesh for the global domain respecting the boundary mesh. As a result, the mesh size is uniform equal to $h_1$ in the left-hand square, $h_2$ in the right-hand square, and non-uniform in the transition region.}
\eit 
Figure \ref{fig:meshH1H2} shows an example of the third type of mesh.

\begin{figure}[H]
	\centering
	\includegraphics[width=0.5\textwidth]{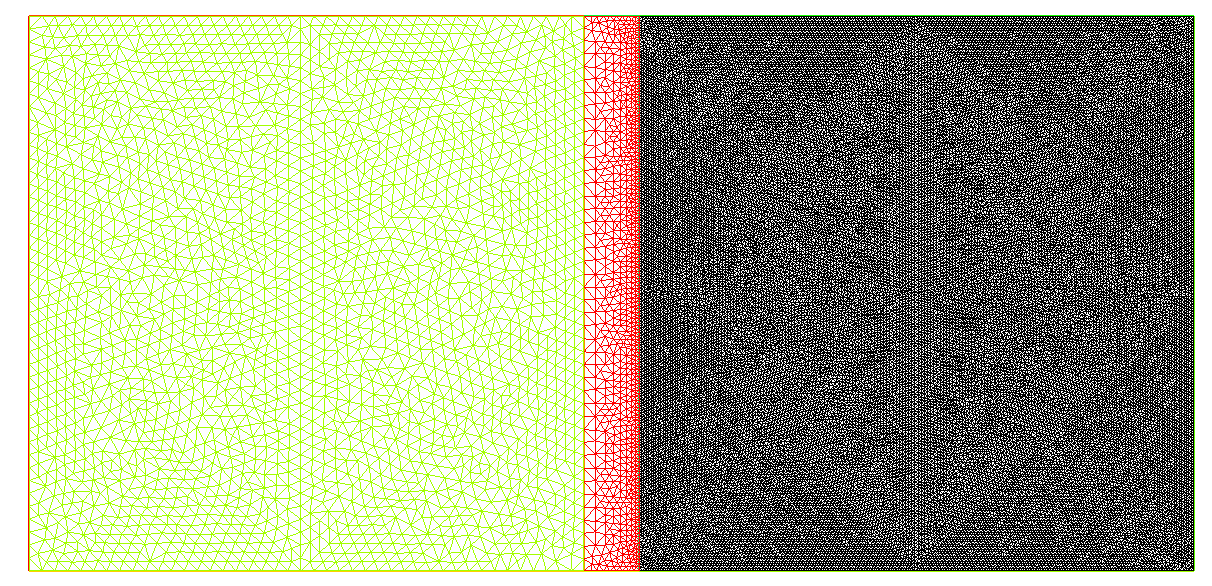}
	\caption{A mesh with two square regions $S_1$ and $S_2$ each with a uniform mesh size (different to each other).	\label{fig:meshH1H2}}
\end{figure}

\begin{figure}[p]
	\centering
	\includegraphics[width=0.45\textwidth]{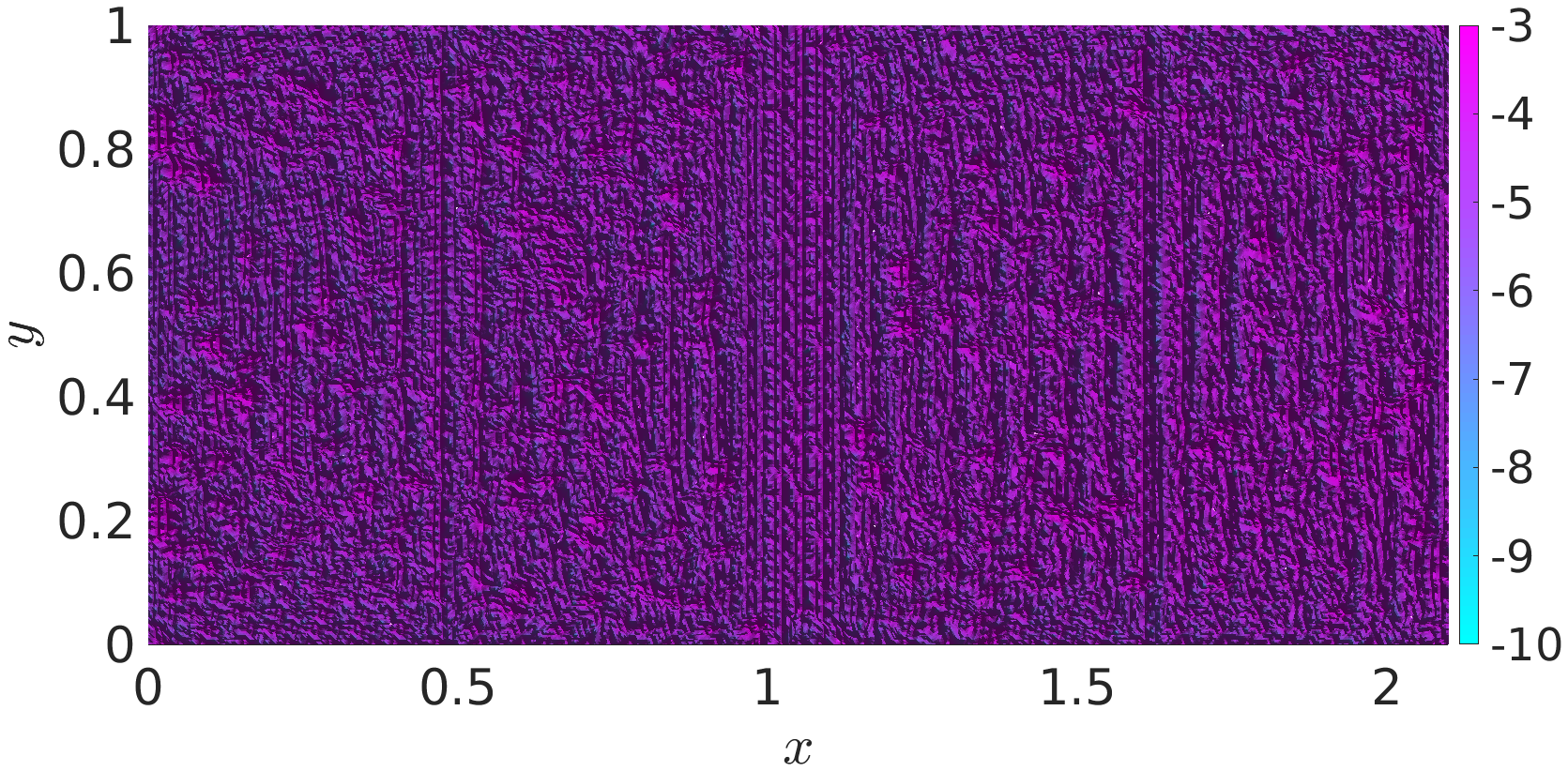}
	\includegraphics[width=0.45\textwidth]{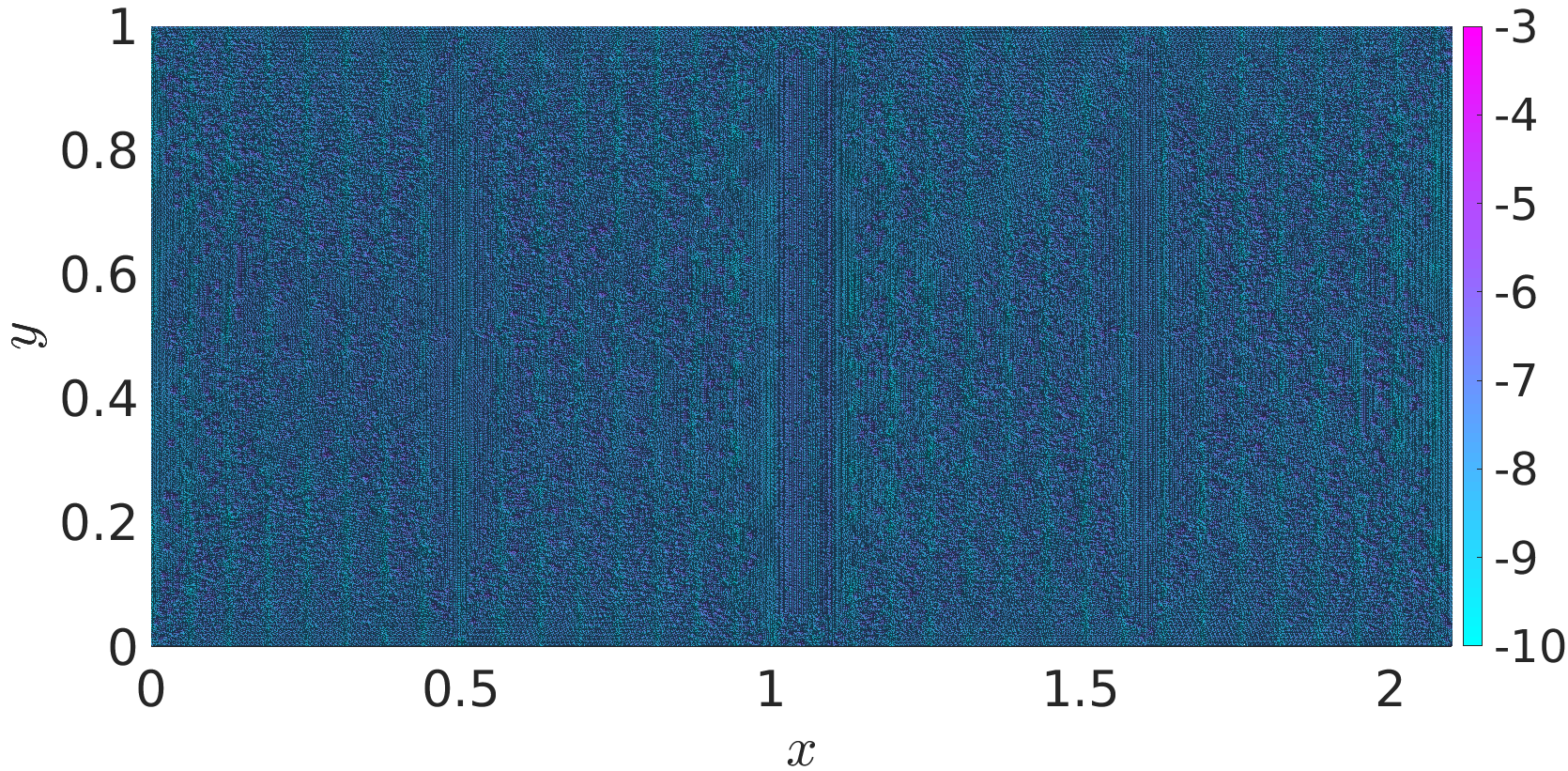}
	\includegraphics[width=0.9\textwidth]{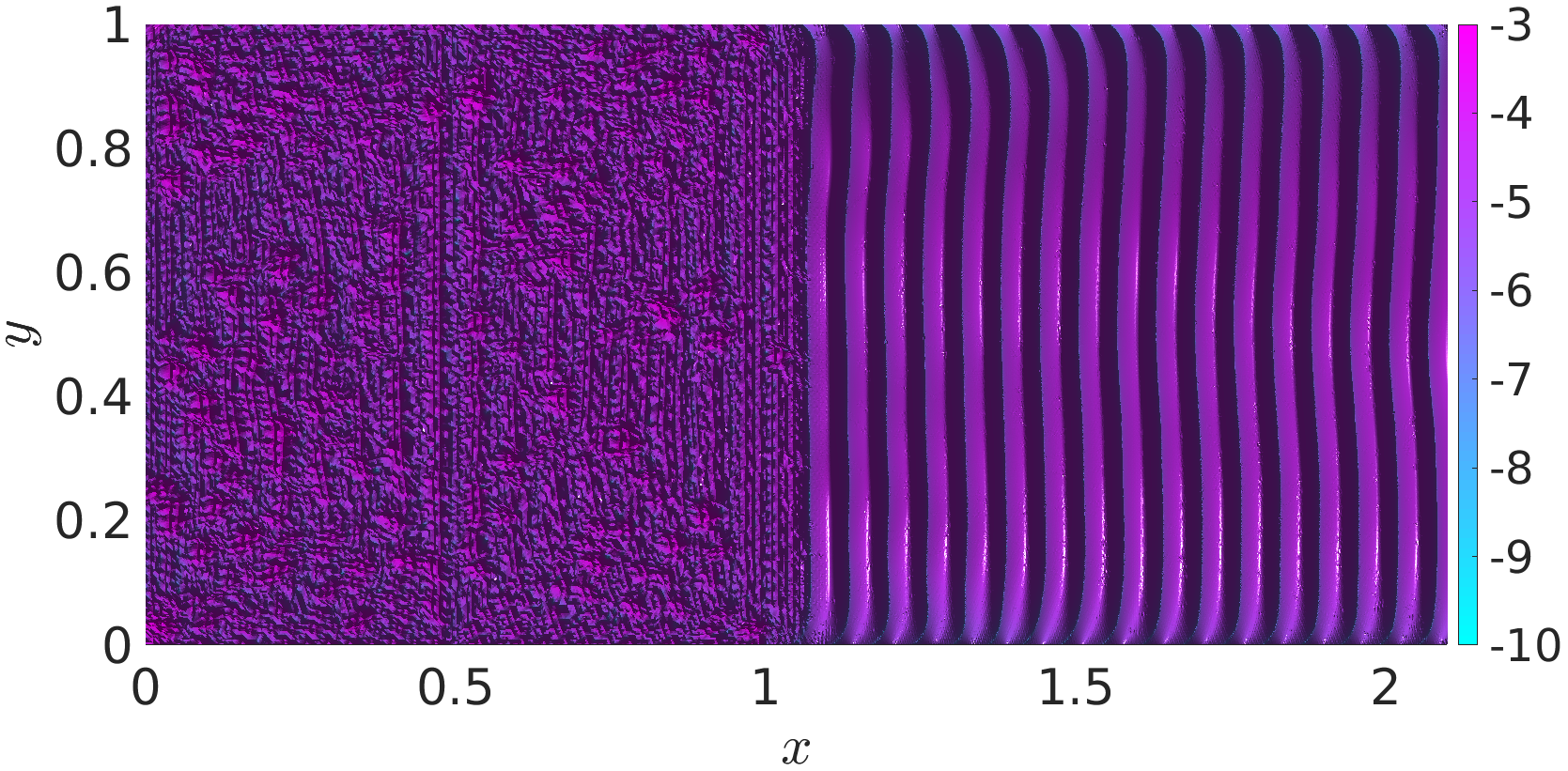}
	\caption{For Experiment 1 in \S\ref{sec:num:mesh}, plot of the quantity $\log(10^{-12} + \abs{\Re(u - u_h)})/\log(10)$, for $k = 50$, $p = 4$ and $\theta = 0$. Top left:~globally uniform mesh with $h =  1.78k^{-1}$. Top right:~globally uniform mesh with $h = 0.38k^{-1}$. Bottom:~non-uniform mesh, with a coarse region $h_1= 1.78k^{-1}$ and a fine region with $h_2 = 0.38k^{-1}$. }
	\label{fig:LFpollutionHorizontal}
\end{figure}

\begin{figure}[p]
	\centering
	\includegraphics[width=0.45\textwidth]{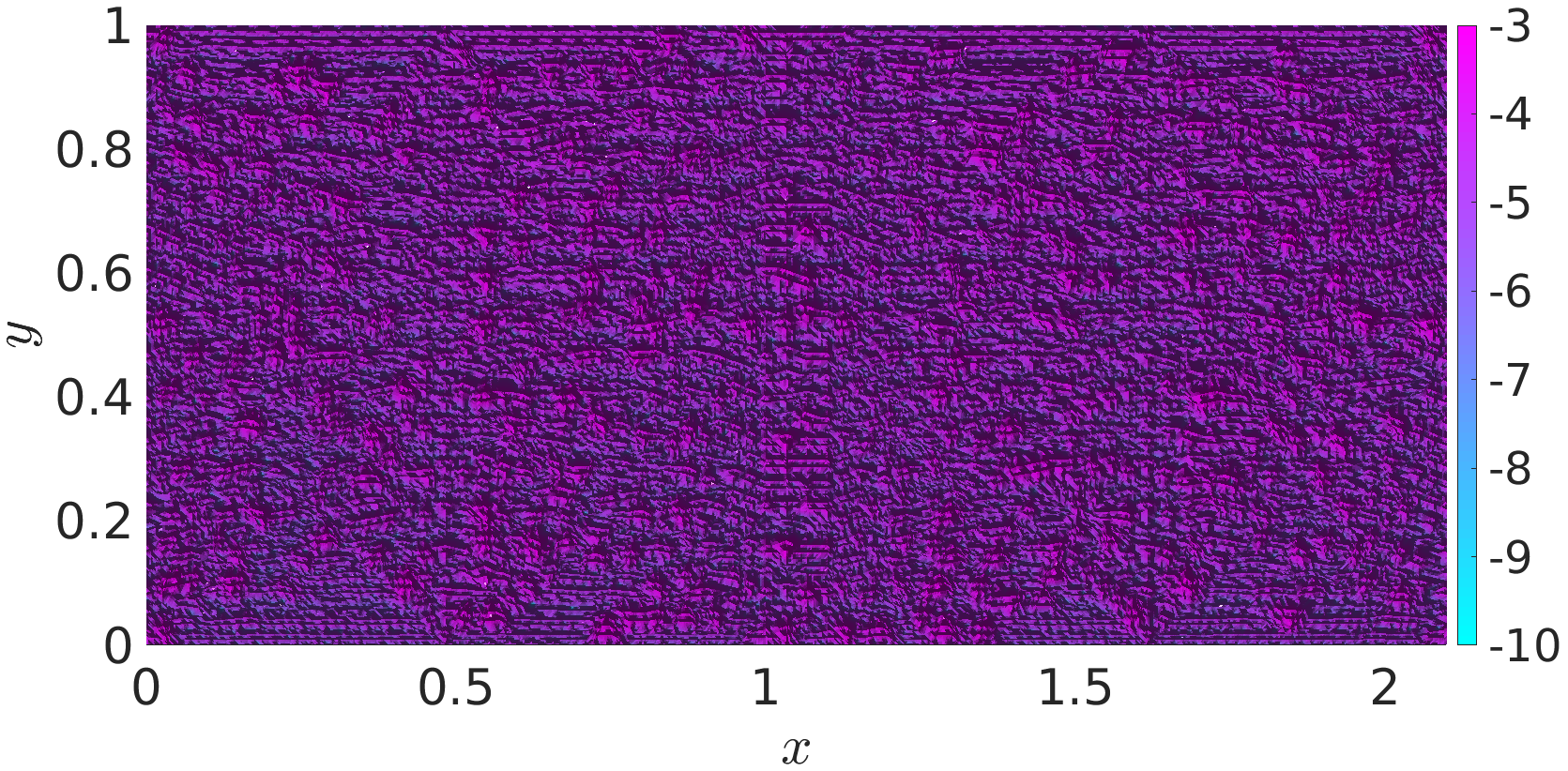}
	\includegraphics[width=0.45\textwidth]{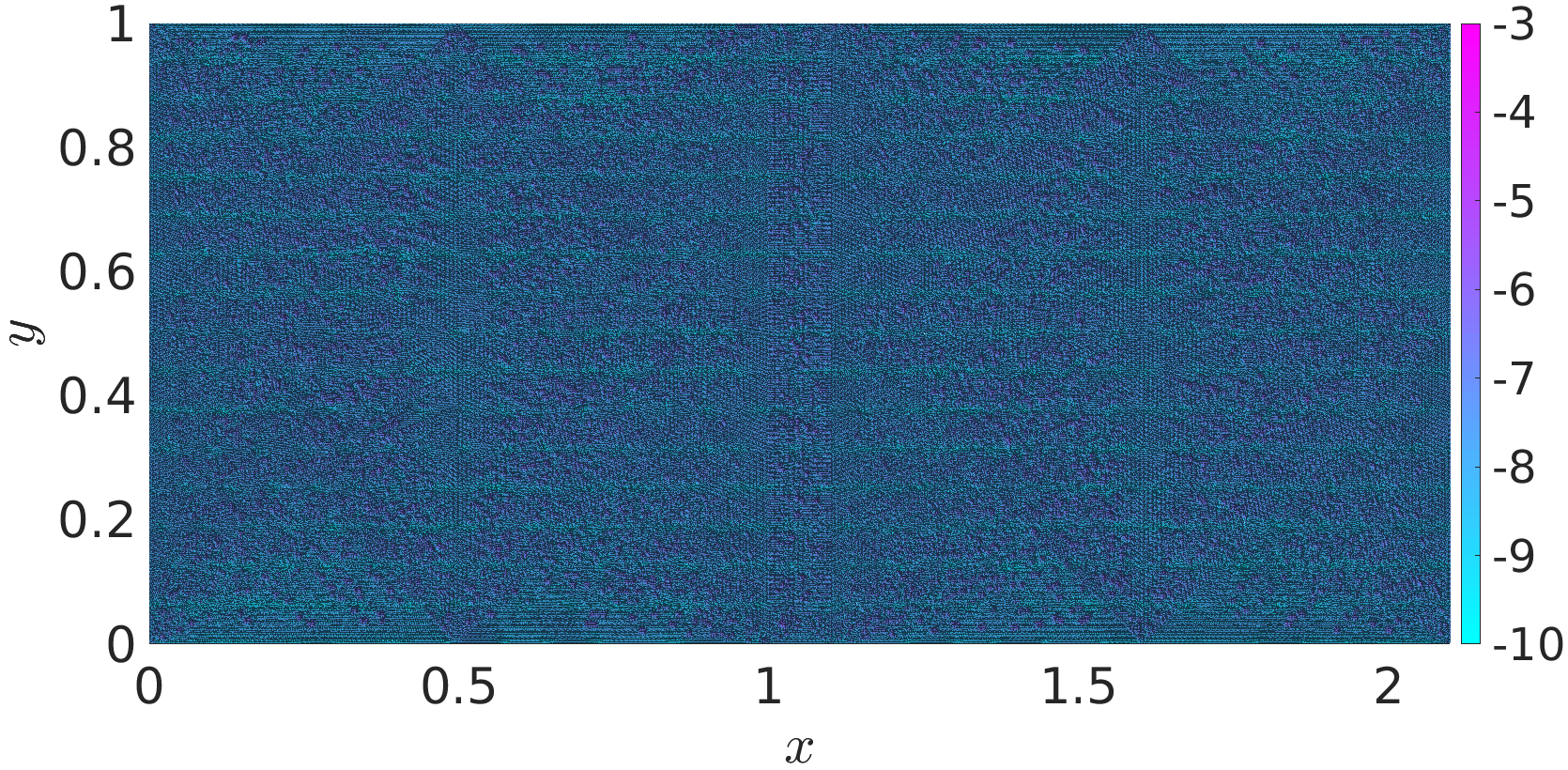}
	\includegraphics[width=0.9\textwidth]{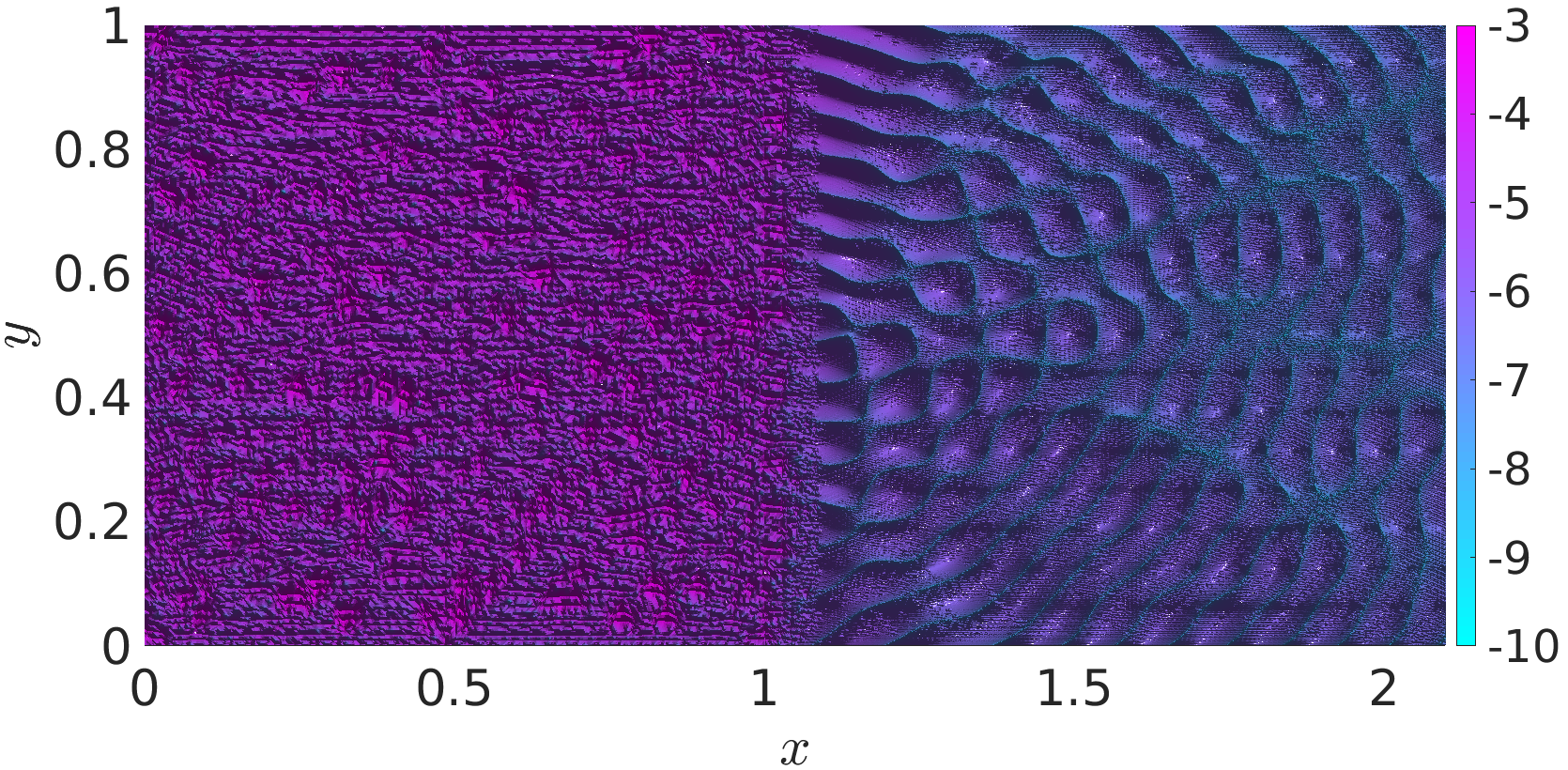}
	\caption{Same as Figure \ref{fig:LFpollutionHorizontal} (including all parameter values) but with $\theta = \pi/2$.}
	\label{fig:LFpollutionVertical}
\end{figure}

Figures \ref{fig:LFpollutionHorizontal} and \ref{fig:LFpollutionVertical} plot the FEM error $u-u_h$ for all three meshes with $k=50$ \es{(i.e., the wavelength $\approx 0.13$)}, $p=4$, $h_1= 1.78 k^{-1}$, $h_2= 0.38 k^{-1}$, $\theta=0$ in Figure \ref{fig:LFpollutionHorizontal}, and $\theta=\pi/2$ in Figure \ref{fig:LFpollutionVertical}. The errors are plotted on a logarithmic scale, with the scale kept the same for all the plots.

In both Figures \ref{fig:LFpollutionHorizontal} and \ref{fig:LFpollutionVertical}, for the third mesh the error in the right-hand square (with mesh width $h_2$) is between 10 to 100 times larger than the error in the right-hand square for the second (globally $h_2$) mesh; i.e., in the right-hand square for the third mesh, the error is dominated by the ``slush'' term on the right-hand side of \eqref{eq:intro2} and not the local best-approximation error. Furthermore, by eye, this ``slush'' error is low frequency.

The difference between Figures \ref{fig:LFpollutionHorizontal} and \ref{fig:LFpollutionVertical} is that in the first, the exact solution propagates from left to right, whereas in the second the exact solution propagates upwards. 
These figures show that the error is affected by the direction of propagation of the solution (or more precisely, its localisation in Fourier space), but does not  ``inherit'' these properties; indeed, in 
Figure \ref{fig:LFpollutionVertical} the error still propagates from left to right, even though the exact solution propagates upwards.

\paragraph{Experiment 2.} 
This experiments considers the non-uniform mesh from Experiment 1 with 
$h_1$ and $h_2$ now chosen to decrease with $k$ at different rates, with the FEM error computed at a sequence of values of $k$. We do this so that the terms on the right-hand side of \eqref{eq:intro2} have different $k$-dependence in the left and right squares.

We take $p=2$, $h_1= \sqrt{2}/k$, $h_2 = 1/k^{3/2}$, and $k\in [5,60]$, and 
Figures \ref{fig:sequenceImpedLeft} and \ref{fig:sequenceImped} plot the $H^1_k$ norm of the error on the left and right, respectively, as well as  their high- and low-frequency components (see \S\ref{sec:lowPassFilter} below for how these components are computed).

\begin{figure}[H]
	\centering
	\includegraphics[width=0.7\textwidth]{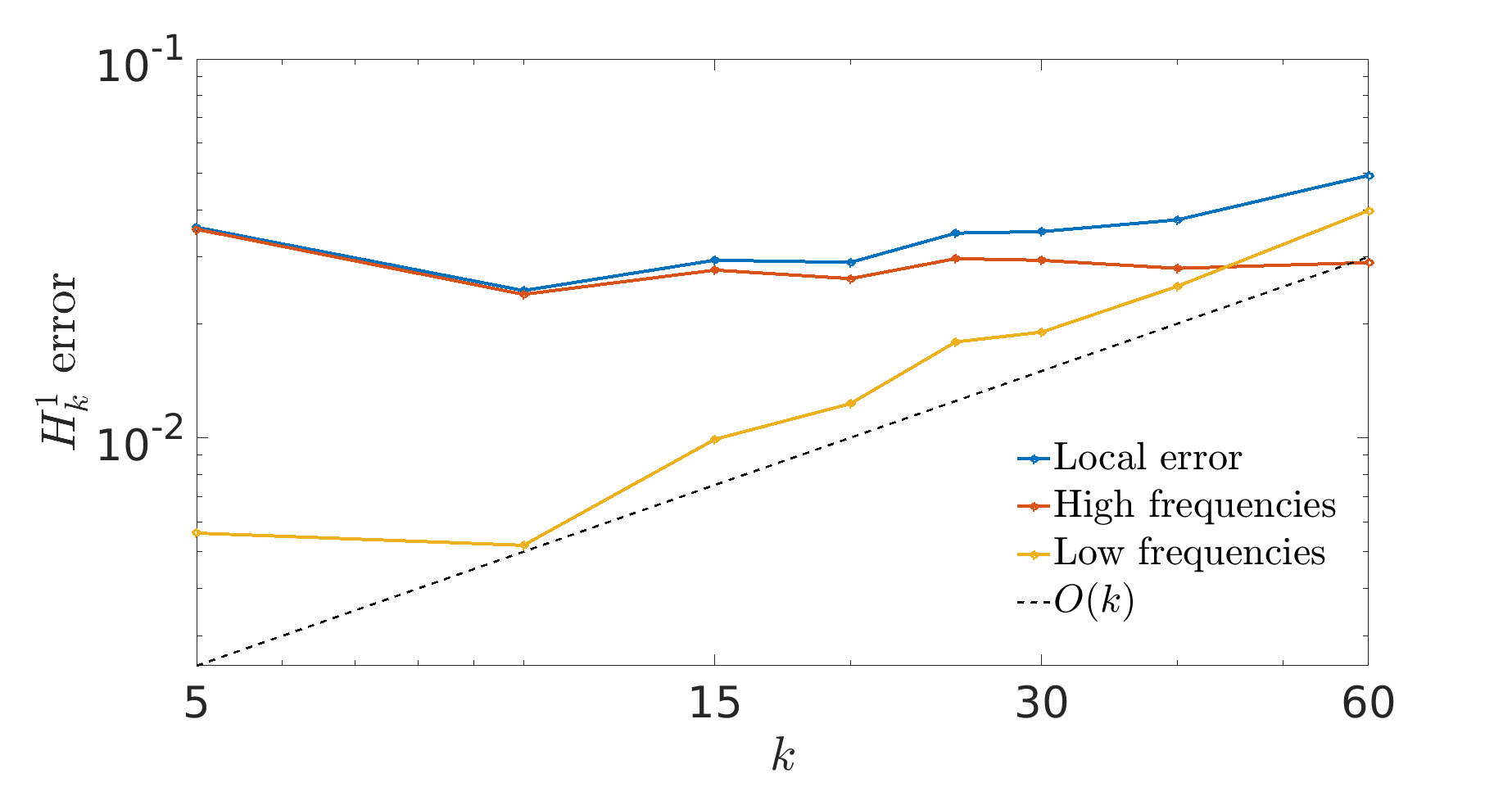}
	\caption{
	For Experiment 2 in \S\ref{sec:num:mesh}, plots of the $H^1_k$ error and its low- and high-frequency components in the left square (i.e., the square with the coarser mesh).} 
	\label{fig:sequenceImpedLeft}
\end{figure}

\begin{figure}[H]
	\centering
	\includegraphics[width=0.7\textwidth]{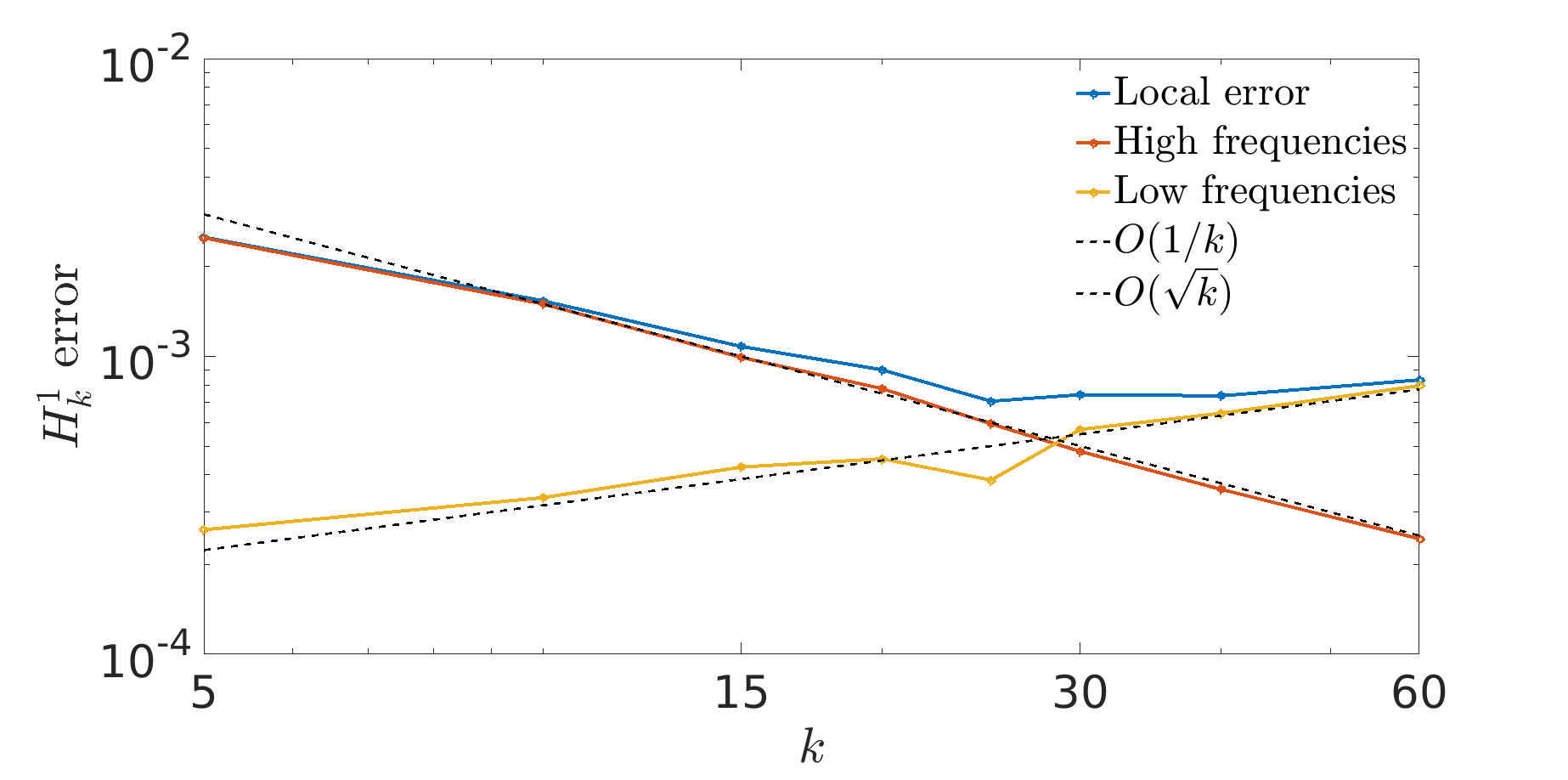}
	\caption{
		For Experiment 2 in \S\ref{sec:num:mesh}, plots of the $H^1_k$ error and its low- and high-frequency components in the right square (i.e., the square with the finer mesh).} 
	\label{fig:sequenceImped}
\end{figure}

The key point is that, \emph{on both the left and right, the FEM solution is locally quasi-optimal, modulo low frequencies, with the low frequencies on the left caused by the pollution effect, and the low frequencies on the right coming from the ``slush'' term propagating from the left.}

In more detail:~in Figure \ref{fig:sequenceImpedLeft} we see that 

(i) the error in the left square grows with $k$, 

(ii) the high frequencies of the error \es{in the left square} are roughly constant in $k$, and 

(iii) the growth in the error is caused by the low-frequency components, which are proportional to $k$. 

Points (i) and (iii) are expected \es{from} \cite[Corollary 3.2 and Point 1 in the following discussion]{IhBa:97} which proves that 
\beqs
\| u-u_h\|_{H^1_k}  \sim k(hk)^{2p} \|u\|_{H^1_k} \sim k,
\eeqs
for the 1-d impedance problem with $hk$ sufficiently small. 
 Point (ii) is expected because the local best approximation error on the left $\sim (hk)^p \| u\|_{H^{p+1}} \sim (hk)^2 \sim 1$; i.e., modulo low frequencies the FEM solution is locally quasi-optimal.

In Figure  \ref{fig:sequenceImped} we see that 

(iv) The high frequencies \es{of the error in the right square} decrease like $k^{-1}$.

(v) The low frequencies \es{of the error in the right square} grow like $k^{1/2}$. 

Point (iv) is expected since the local best approximation error on the right $\sim (hk)^p \| u\|_{H^{p+1}} \sim (hk)^2 \sim k^{-1}$. 

A heuristic argument for Point (v) is the following. Let $\chi$ be a compactly supported cutoff function supported on the left square, and let $e := \chi (u - u_h)$. 
For sufficiently large $k$, Points (i) to (iii) above show that the low frequencies dominate, and thus we assume that, first, 
$\mathcal{F}_k {e}(\xi)$ is negligible except  
\es{for $|\xi|\leq C$}
for some constant $C >1$ and, second, 
$\mathcal{F}_k{e}(\xi)$ is approximately evenly distributed in $\es{\{ \xi: |\xi| \leq C\}}$. 
Since the local $H^1_k$ error on the left $\sim k$, 
\[k^d\int_{\es{|\xi|\leq C}} \langle \xi\rangle^2 \abs{\mathcal{F}_k e(\xi)}^2 \,\rd\xi \sim k^2\]
which, \es{under the assumptions}, implies that $\abs{\mathcal{F}_k e(\xi)} \sim  k^{1-d/2}$ uniformly for 
\es{$|\xi|\leq C$}.
Within $\es{\{ \xi: |\xi| \leq C\}}$, only the components in \es{$\{ \xi: 1 - \eta/k \leq |\xi| \leq 1 + \eta/k\}$} propagate, 
where $\eta = O(1)$; this is because  the Helmholtz operator is (semiclassically) elliptic away from frequency $k$ \es{(see, e.g., the discussion in \cite[\S1.8]{GLSW1})}.
Therefore, the \es{squared} $H^1_k$ norm of the error propagated to the right square is approximately
\[
k^d\int_{\es{1 - \eta/k \leq |\xi| \leq 1 + \eta/k}}
 \langle \xi\rangle^2 \abs{\mathcal{F}_k e(\xi)}^2 \,\rd\xi 
\sim\es{k^2\int_{\es{1 - \eta/k \leq |\xi| \leq 1 + \eta/k}}
 \langle \xi\rangle^2  \,\rd\xi 
\sim k^2 \frac{\eta}{k}}
 \sim k; 
\]
i.e., Point (v).

\subsection{Experiments with an artificial source term}\label{sec:num:source}

\paragraph{Geometry of the obstacles considered in the simulations}

\es{We work with the Helmholtz problem in \S\ref{sec:introHelmholtz}, i.e., the exterior Dirichlet problem, with $A_{\rm scat} \equiv I$ and $c_{\rm scat} \equiv 1$.}
We consider the following four obstacles $\Omega_-$.  
\begin{itemize}
	\item No obstacle i.e. $\Omega_- = \emptyset$.
	\item Two flat mirrors, as shown in Figure \ref{fig:flat mirrors}. 
 	\item One flat mirror, i.e., the obstacle in Figure \ref{fig:flat mirrors} with the left mirror removed.
	\item Two curved mirrors with an inscribed ellipse, as shown in Figure \ref{fig:elliptic mirrors}. 
\end{itemize}

\begin{figure}[H]
	\centering
	\begin{subfigure}[t]{0.3\textwidth}
		\centering
		\includegraphics[height=4cm]{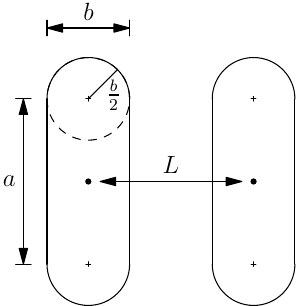}
		\caption{Flat mirrors}
		\label{fig:flat mirrors}
	\end{subfigure}
	\begin{subfigure}[t]{0.3\textwidth}
		\centering
		\includegraphics[height=4cm]{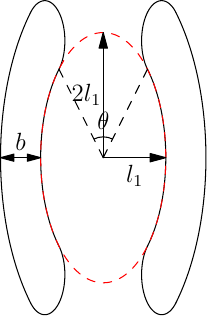}
		\caption{Elliptic mirrors}
		\label{fig:elliptic mirrors}
	\end{subfigure}
	\caption{Flat mirror and elliptic mirror geometries.}
\end{figure}
In all experiments, the computational domain is a disk of radius $1.5$, and the PML region is the annulus $\{1 \leq r \leq 1.5\}$. 
For the one flat mirror, $a=0.4, b=0.2,$ and $L=0.6$. 
For the two flat mirrors, $a=0.6, b=0.2,$ and $L=0.8$.
For the two curved mirrors, $l_1= 0.265, l_2 =2 l_1,$ $b=0.17$, and $\theta=\arctan(2 \tan (\pi/3))$.

\paragraph{Quasi-resonant wavenumbers for the trapping obstacles}

Whereas there is no trapping for the case of no obstacle or one flat mirror, the two flat mirrors and two curved mirrors trap rays. 
For these latter two geometries, the norm of the solution operator grows faster through an increasing sequence of $k$s (often called ``quasi resonances'') than the nontrapping solution operator. 

The experiments below for the two flat mirrors and two curved mirrors are conducted at quasi resonances for these obstacles.

For the flat mirrors, the quasi resonances are 
\[k_n := n\frac{\pi}{L - b}\,, \quad n=1,2,\ldots.\]
Indeed, for each of those frequencies, one can manufacture a ``quasi-mode" of the Laplace operator, in the form 
$u_n(x,y) = \chi(y) \psi(x)\sin(k_n x)$
where $\chi$ is a cutoff function supported in $(-a/2,a/2)$, and $\psi$ is a cutoff function with $\psi \equiv 1$ on $[-L/2+b/2, L/2-b/2]$ and supported in $[-L-b/2,L+b/2]$.

For the curved mirrors, the Helmholtz solution operator grows exponentially through
  the square roots of eigenvalues of the Laplace operator with Dirichlet conditions in the domain $U$ equal to the ellipse inscribed between the mirrors. 
By separation of variables, these quasi resonances 
  can be expressed as zeros of some special Mathieu functions (see \cite[Appendix E]{MGSS1}) and computed accordingly 
  (see, e.g.,\cite{wilson2007computing} and associated Matlab toolbox).

\paragraph{The artificial source term.}
Let $g: = k^{-2}\Delta u + u$, 
where 
\begin{equation*}
	u(x,y) := \chi(x) \chi(y) \exp(\ri k x)
\quad\tand\quad
\chi(x) = \begin{cases}
	0 & \textup{for } |x| \geq 0.1 \\
	\exp\left(\frac{5 x^2}{x^2 - 0.01}\right) & \textup{otherwise.}
\end{cases}
\end{equation*}
Observe that $u$ is supported in $\Omegasource := [-0.1,0.1]^2$. 

\paragraph{The FEM error.}

Figure \ref{fig:aliens} plots the FEM error (on a logarithmic scale) with $hk = 1$ and $p = 4$. For the two nontrapping obstacles (i.e., no obstacle and the one flat mirror) 
$k=100$. For the two trapping obstacles $k$ is taken to be the closest quasi resonance to $100$, namely $k= 104.72$ for the two flat mirrors and $k=95.838$ for the two curved mirrors.	

\begin{figure}[h]
	\centering
	\includegraphics[width=0.4\textwidth]{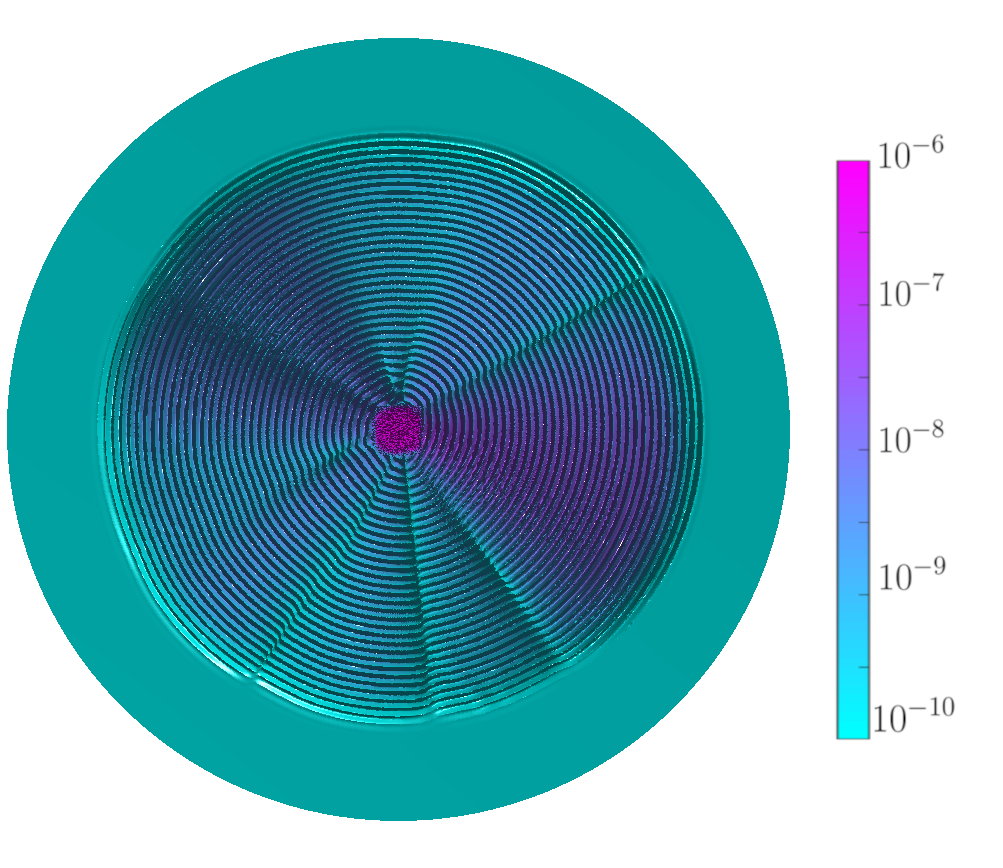}
	\includegraphics[width=0.4\textwidth]{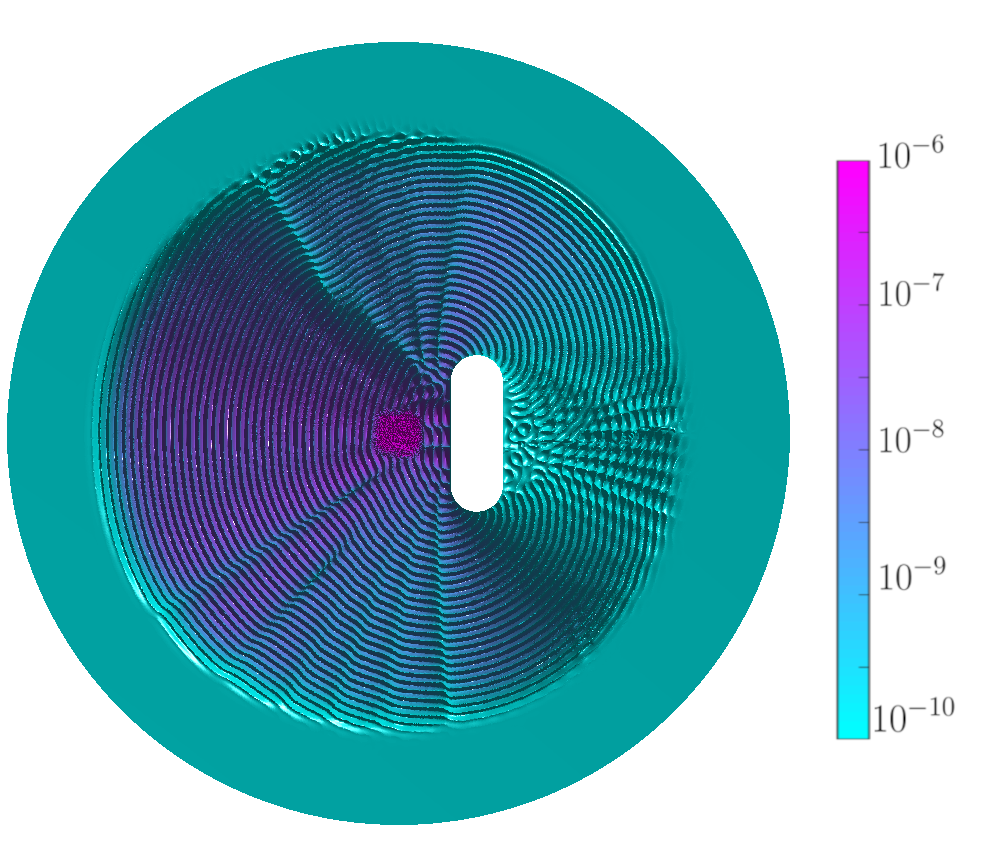}\\
	\includegraphics[width=0.4\textwidth]{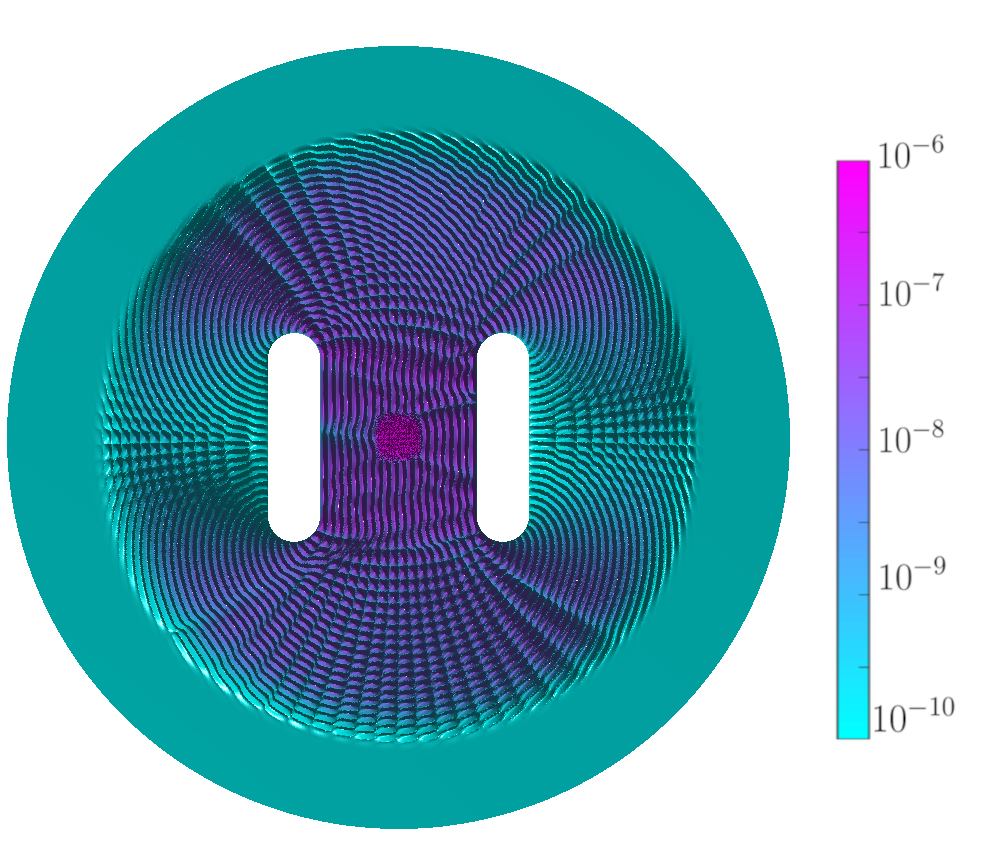}
	\includegraphics[width=0.4\textwidth]{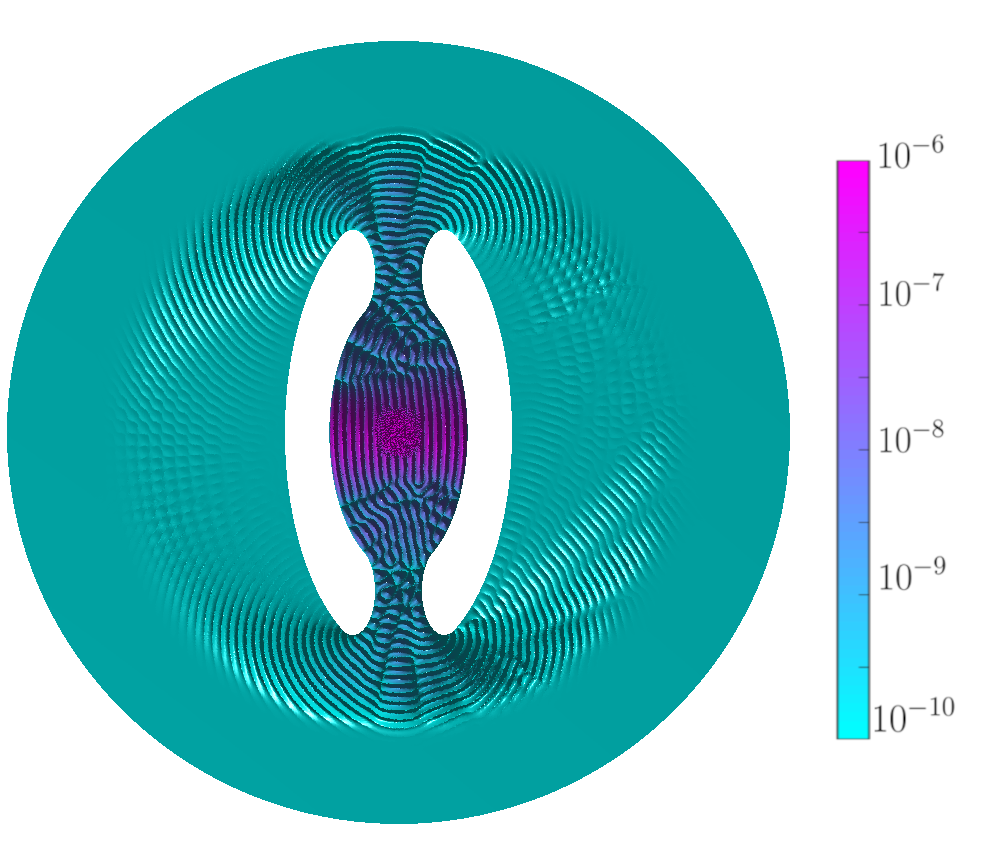}
	\caption{
	For the experiment in \S\ref{sec:num:source} with $k \approx 100$, $hk = 1$ and $p = 4$, the plots show $\log(\abs{\Re(u - u_h)})/\log(10)$ (the rationale for plotting the real part and not the absolute value is to give a sense of the wave-length). Plots obtained with FFMATLIB.}
	\label{fig:aliens}
\end{figure}

All four of the figures show a large high-frequency error on the support of $u$ (i.e., on $\Omegasource$) and a small low-frequency error away from the support of $u$. Since the best approximation error is zero away from the support of $u$, this is consistent with the claim that Helmholtz FEM solutions are 
quasi optimal  modulo low frequencies.

The four figures also show that, whereas the high-frequency error in $\Omegasource$ is roughly the same in all four cases, the low-frequency error is affected by the shape of the obstacles. Indeed, 
for the two trapping obstacles the ``slush" is larger inside the trapping regions than elsewhere, and for the three non-empty obstacles the ``slush'' is smaller in the shadow regions. 

To quantify the high- and low-frequency behaviour more precisely, for each obstacle we consider two boxes:~$\Omegasource$ and 
 a location away from the source (described for each obstacle in the caption of Table \ref{tab:rhoAway}). 
In each box we 
let $v_h = \chi (u-u_h)$, where $\chi$ is a cut-off function compactly supported in each box, and we compute the high- and low-frequency components of $v_h$ as described in \S\ref{sec:lowPassFilter} below.

For the ``one mirror'' obstacle, $v_h$ in the two different boxes is plotted in  Figure \ref{fig:illustrationDFT} for $k=50$. Figure \ref{fig:powerSpectra} plots the Discrete Fourier Transform (DFT) of $v_h$ in base $10$ log scale; these plots confirm that the error away from $\Omegasource$ (in the left plot) is dominated by low frequencies, compared to the error in $\Omegasource$ (in the right plot). 

\begin{figure}[h]
	\centering
	\includegraphics[width=0.45\textwidth]{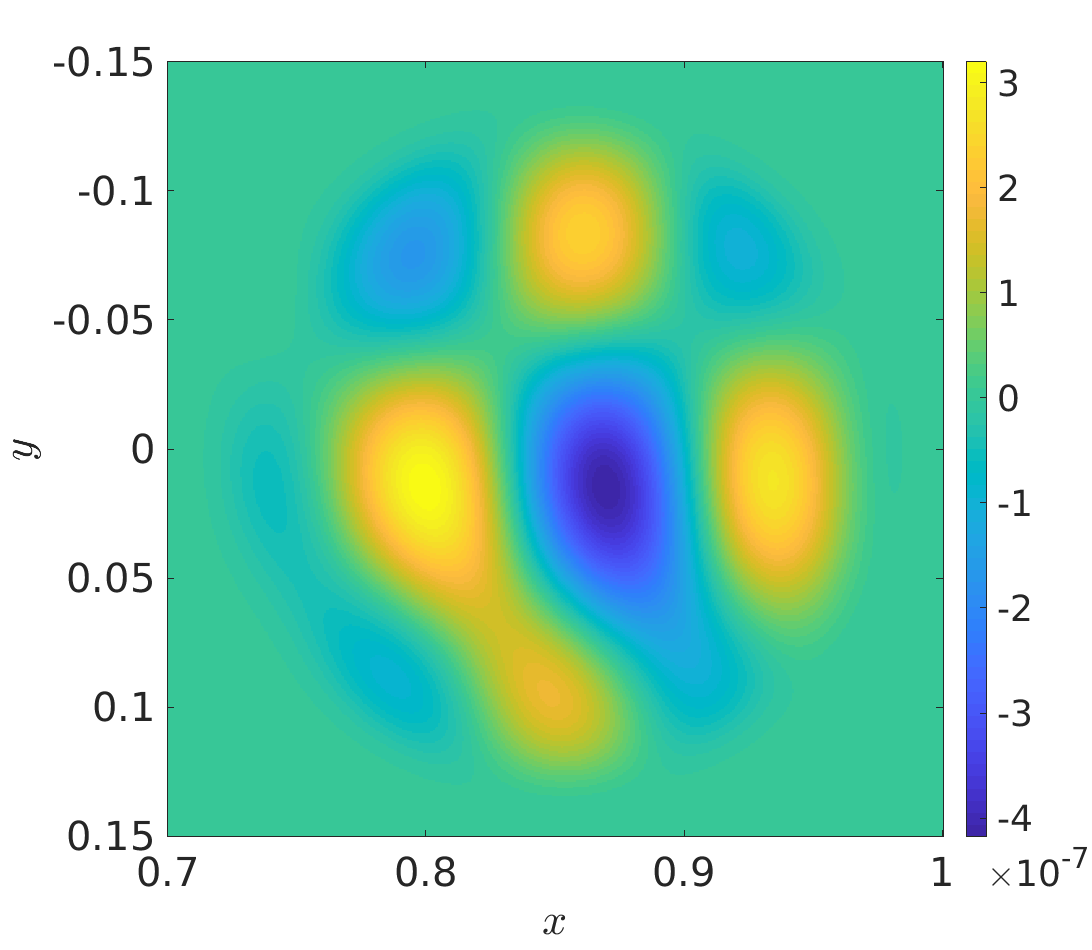}
	\includegraphics[width=0.45\textwidth]{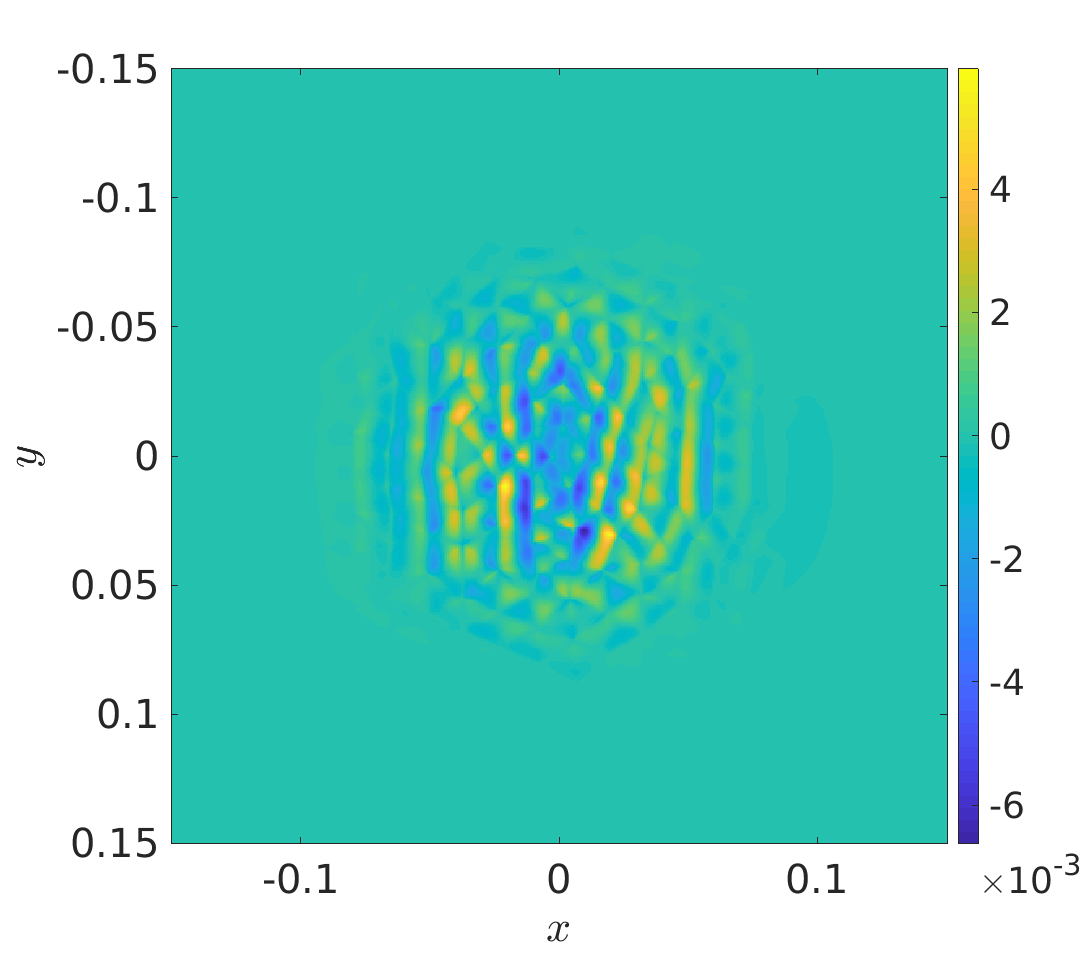}
	\caption{The signal $V$ corresponding to the scattering by ``one wall" (see Figure \ref{fig:aliens}, top right) with $k = 50$ and $\Delta = 10^{-3}$. Left: $x_0 = 0.7$, $y_0 = -0.15$ (thus, the box is on the right of the wall, and the error is ``pure slush". Right: $x_0 = y_0 = -0.15$ (thus, the box contains $\Omegasource$). }
	\label{fig:illustrationDFT}
\end{figure}

\begin{figure}[h]
	\centering \includegraphics[width=0.45\textwidth]{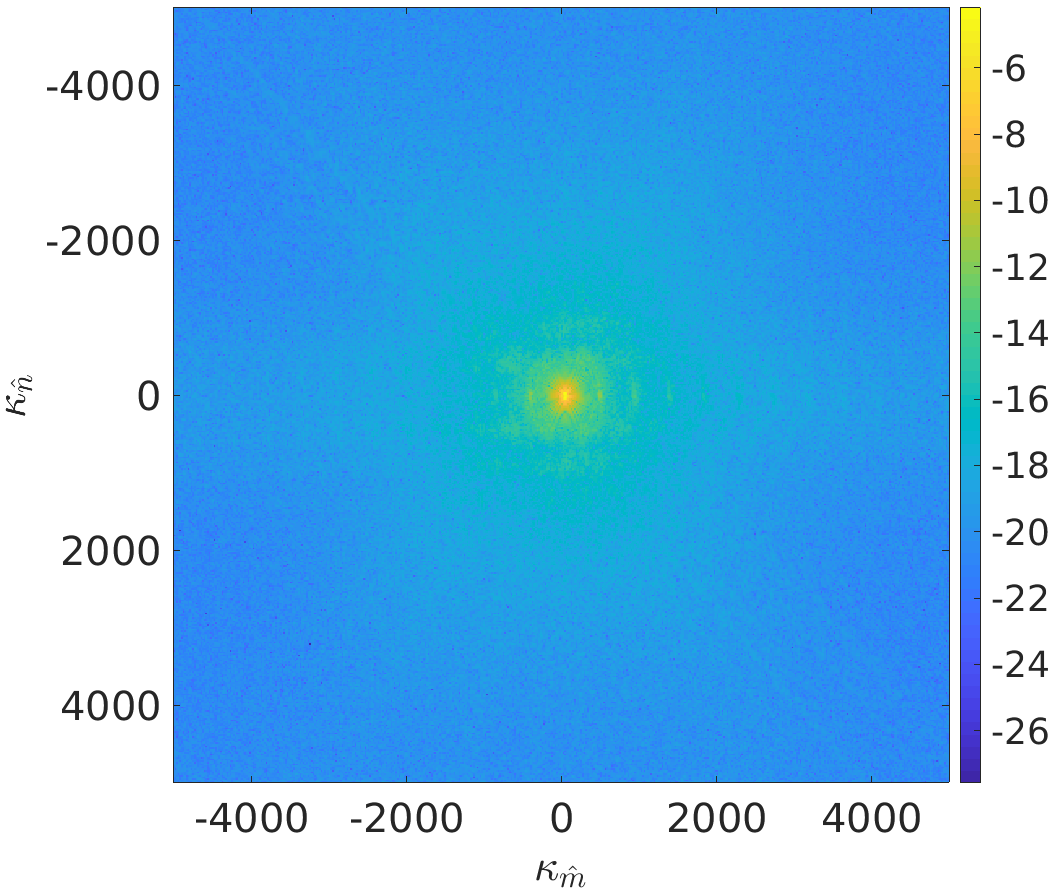}
	\includegraphics[width=0.45	\textwidth]{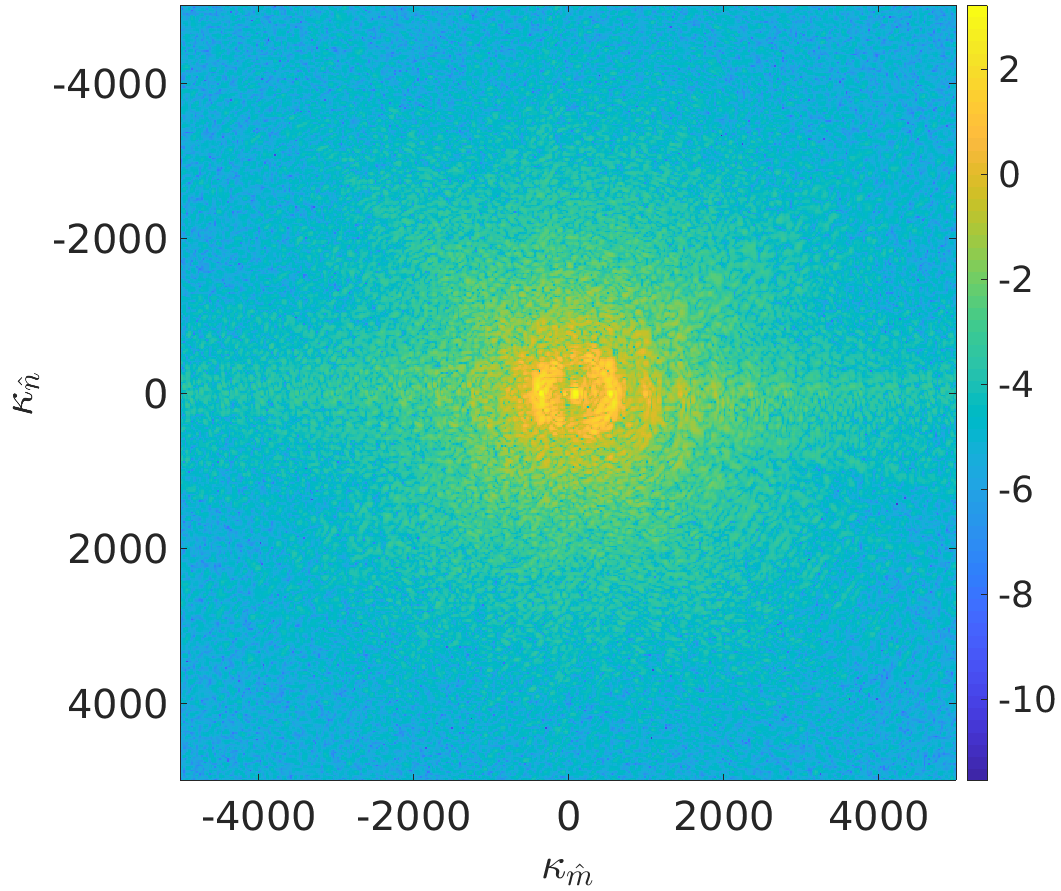}	
	\caption{Graph of $\log(|\widehat{V}|)$, where in each figure, $\widehat{V}$ is the Discrete Fourier Transform of the corresponding $V$ in Figure \ref{fig:illustrationDFT}. This confirms that error away from $\Omegasource$ (left) is dominated by low frequencies, compared to the error in  $\Omegasource$ (right).}
	\label{fig:powerSpectra}
\end{figure}

Tables \ref{tab:rhoAway} and \ref{tab:rhoBox} plot the quantity $\rho$ defined by \eqref{eq:defrho} below, which measures the proportion of the high-frequencies of $v_h$. As expected, the values of $\rho$ are much smaller in the ``away'' region than in $\Omegasource$.

\begin{table}[H]
	\centering
	\begin{tabular}{l|ccc}
	 Experiment	 & $k = 50$ & $k = 75$ & $k = 100$ \\\hline 
	 a) no obstacle  & 0.0593& 0.0165 &0.0091\\
	 b) one flat mirror & 0.0625& 0.0183& 0.0103\\
	 c) two flat mirrors & 0.0471 & 0.0251&0.0122\\ 
	 d) two curved mirrors &0.0460 & 0.0236& 0.0186
	\end{tabular}
\caption{Values of $\rho$ (defined by \eqref{eq:defrho}) in experiments a)-d) for a box away from $\Omegasource$. For the experiments a)-b) , the box is defined by $x_0 = 0.7$, $y_0 = -0.15$. In c) and d), we use $x_0 = -0.15$ and $y_0 = 0.2$ (thus the boxes are inside the cavities). We take $\Delta = (20k)^{-1}$. In all cases, $N \Delta = 0.3$ (i.e., the box sides are 0.3). The polynomial degree of the FEM is set to $p=2$ in these experiments. 
}\label{tab:rhoAway}
\end{table}

\begin{table}[H]
	\centering
	\begin{tabular}{l|ccc}
		Experiment	 & $k = 50$ & $k = 75$ & $k = 100$ \\\hline
		a) no obstacle & 0.9379& 0.8421& 0.8142\\
		b) one flat mirror &0.9375 & 0.8700& 0.8001\\
		c) two flat mirrors & 0.9342& 0.8726& 0.8043\\ 
		d) two curved mirrors &0.9532& 0.9177&0.5817
	\end{tabular}
	\caption{Values of $\rho$ (defined by \eqref{eq:defrho}) in experiments a)-d) for the box $[-0.15,0.15]^2 \supset \Omegasource$.
	}
	\label{tab:rhoBox}
\end{table}

\subsection{Computing the high- and low-frequency components}

\label{sec:lowPassFilter}

To compute the high- and low-frequency components of a finite-element function $v_h$ restricted to a square, 
we compute the 2-dimensional discrete Fourier transform of the matrix ${V}$ defined by 
\[{V}_{m,n} = v_h(x_m,y_n)\,, \quad 0 \leq m,n \leq N-1\]
where $x_m = x_0 + m\Delta$, $y_n = y_0 + n\Delta$
and the sampling rate $\Delta^{-1}$ is chosen sufficiently large (in our tests, we use $\Delta = 
2\pi/(200 k)$ 
and compare with $\Delta = 2\pi/(400 k)$ 
 to confirm that the results are not very sensitive to this value). This interpolation from a triangular mesh to a Cartesian grid is performed with the help of the FFMATLIB toolbox.\footnote{\url{https://github.com/samplemaker/freefem_matlab_octave_plot/blob/public/README.md}} 

Let ${\widehat{V}}$ be the discrete Fourier Transform of ${V}$, that is 
\[\widehat{V}_{\widehat{m},\widehat{n}} = \sum_{m = 0}^N \sum_{n = 0}^N V_{m,n} \re^{- \frac{2\ri\pi}{N}(\widehat{m} m + \widehat{n} n)}\,,\]
so that
\[v_h(x_m,y_n)  = \frac{1}{N^2}\sum_{\widehat{m} = 1}^N \sum_{\widehat{n} = 1}^N \widehat{W}_{\widehat{m},\widehat{n}}  \re^{\ri \kappa_{\widehat{m}} x_m} \re^{\ri \kappa_{\widehat{n}} y_n}\,, \quad 0 \leq m,n \leq N-1\,, \]
with
\[\widehat{W}_{\widehat{m},\widehat{n}} = \widehat{V}_{\widehat{m},\widehat{n}} \re^{-\ri (\kappa_{\widehat{m}} x_0 + \kappa_{\widehat{n}} y_0)}\,, \quad \kappa_{\widehat{m}} = \frac{2\pi}{\Delta} \frac{\widehat{m}}{N}\,.\]
That $v_h$ is low frequency means that one can represent it accurately as a linear combination of waves of the form $\re^{\ri \kappa_1 x }\re^{\ri \kappa_2 y}$ with $\kappa_1, \kappa_2\lesssim k$. Here we check this by computing the discrete signal
\[\widetilde{v}_h(x_n,y_n) := \frac{1}{N^2} \sum_{\widehat{m} = 1}^{N} \sum_{\widehat{n} = 1}^{N} \widehat{H}_{\widehat{m}}\widehat{H}_{\widehat{n}}\widehat{W}_{\widehat{m},\widehat{n}} \re^{\ri \kappa_{\widehat{n}} x_n } \re^{\ri \kappa_{\widehat{m}} y_n}\,,\]
where $H$ is a ``low-pass filter", i.e.
\[\widehat{H}_{\widehat{m}} = \begin{cases}
	1 & \textup{if } \kappa_{\widehat{m}} \leq \alpha k \textup{ or } \frac{2\pi}{\Delta} - \kappa_{\widehat{m}} \leq \alpha k\,, \\
	0 & \textup{otherwise.}
\end{cases}\]
Note that, by periodicity, for all $0 \leq m, \widehat{m} \leq N-1$, one has 
\[\re^{\ri \kappa_{\widehat m} x_m} = \re^{-\ri (\frac{2\pi}{\Delta} - \kappa_{\widehat m}) x_m}\,,\]
hence $\widehat{H}$ effectively removes all frequencies outside the interval $[-\alpha k,\alpha k]$. Here, $\alpha$ is a parameter, set to $2$ in our tests. 

The relative $l^2$ norm of the high-frequency components of $v_h$ is 
\begin{equation}
	\label{eq:defrho}
	\rho := \frac{\norm{v_h - \widetilde{v}_h}_{l^2}}{\norm{v_h}_{l^2}}\,,
\end{equation}
where $\norm{\cdot}_{l^2}$ is the discrete $l^2$ norm. By Parseval's theorem for discrete Fourier transforms, this ratio is equal to
\[\rho = \frac{\sum_{\widehat{m},\widehat{n} = 1}^N (1 - \widehat{H}_{\hat m} \widehat{H}_{\widehat{n}})^2\abs{\widehat{V}_{\widehat{m},\widehat{n}}}^2}{\sum_{\widehat{m},\widehat{n} = 1}^{N} \abs{\widehat{V}_{\widehat{m},\widehat{n}}}^2}\,.\]

\section{Abstract framework}\label{sec:assumptions}

\subsection{Function spaces}\label{sec:function}

Given a Hilbert space $\mathcal{Y}$, let $\mathcal{Y}^*$ be the Hilbert space of bounded {\em anti-linear} functionals $G: \mathcal{Y} \to \mathbb{C}$ (i.e. $G(\lambda y) = \overline{\lambda}G(y)$) equipped with the norm
\beq\label{eq:dual}
\norm{G}_{\mathcal{Y}^*} := \sup_{y \in \mathcal{Y} \setminus \{0\}} \frac{|G(y)|}{\norm{y}_{\mathcal{Y}}}.
\eeq
Let $\Omega \subset \R^d$ be a bounded open set and let $\mathcal{H} := L^2(\Omega)$. As usual, $\mathcal{H}$ is identified with its dual $\mathcal{H}^*$. Furthermore, let $k_0 > 0$ and let $(\mathcal{Z}_k)_{k \geq k_0}$ be a family of Hilbert spaces such that for each $k \geq k_0$, the inclusion
$\mathcal{Z}_k\subset \mathcal{H}$ is dense.
In all the examples below, $\cZ_k$ is either $H^1_k(\Omega)$ or this space with a \es{zero} Dirichlet boundary condition prescribed on a subset of its boundary. 
To quantify abstractly the ``regularity" of elements of $\mathcal{H}$, \es{given $\ell \in \mathbb{N}$}, we introduce a scale of Hilbert spaces $(\mathcal{Z}_{k}^j)_{0 \leq j \leq \ell+2}$ with  $\mathcal{Z}_k^0 =\mathcal{H}$, $\mathcal{Z}_k^1 =\mathcal{Z}_k$, and with dense inclusions $\mathcal{Z}_k^j\subset \mathcal{Z}_k^{j-1}$ for $j=1,\dots,\ell+2$.

For $j \geq 0$, each anti-linear functional  $g$ on $\mathcal{Z}_k^j$ also defines an anti-linear functional $g'$ on $\mathcal{Z}_k^{j'}$ for $j' \geq j$, via restriction since $\mathcal{Z}_k^{j'} \subset \mathcal{Z}_k^{j}$. This restriction map is moreover continuous and injective, by density of the previous inclusions and hence we identify $g$ and $g'$. This identification is compatible with the identification of $\mathcal{H}$ to its dual, and gives the chain of continuous and dense inclusions
\begin{equation}
	\label{eq:embeddings}
	(\mathcal{Z}_k^{\ell +1})^* \supset (\mathcal{Z}_k^{\ell })^* \supset \ldots \supset (\mathcal{Z}_k^{1})^* \supset \underbrace{\mathcal{Z}_k^0}_{= \mathcal{H}} \supset   \underbrace{\mathcal{Z}_k^1}_{= \mathcal{Z}_k} \supset \ldots \supset \mathcal{Z}_k^{\ell} \supset \mathcal{Z}_{k}^{\ell + 1}\,.
\end{equation}
We assume that there exists $\Cemb > 0$ such that, for all $0 \leq j \leq j' \leq \ell + 2$,
\begin{equation}
	\label{eq:continuous_embed}
	\norm{u}_{\mathcal{Z}_k^j} \leq \Cemb \norm{u}_{\mathcal{Z}^{j'}_k}\quad \tfa \es{u \in \mathcal{Z}_k^{j'} \tand} k \geq k_0.
\end{equation}
which also implies that $\norm{u}_{(\mathcal{Z}_k^{j'})^*} \leq C_{\rm emb} \norm{u}_{(\mathcal{Z}_k^{j})^*}$ for $0 \leq j \leq j' \leq \ell + 2$. 

For technical reasons (to be able to treat transmission problems), we introduce another scale $(\mathcal{W}_k^j)_{0 \leq j \leq \ell+1}$ of Hilbert spaces with the property that $\mathcal{Z}_k^j \subset \mathcal{W}_k^j \subset \mathcal{H}$ with continuous inclusions (in particular $\mathcal{W}_k^0 = \mathcal{H}$), and, for all 0 $\leq j\leq  \ell + 2$,
\begin{equation}
\label{e:embedW}
\norm{u}_{\mathcal{W}_k^j} \leq \Cemb \norm{u}_{\mathcal{Z}^{j}_k}\quad \tfa \es{u \in \mathcal{Z}_k^j \tand} k \geq k_0.
\end{equation}

\begin{example}
For the Helmholtz transmission problem with the outgoing condition approximated by a perfectly-matched layer, 
\beqs
\cZ_k = H^1_0(\Omega), \quad \cW^j_k = 
L^2(\Omega)\cap \big( H^j(\Omega_{\rm in})
\oplus H^j(\Omega_{\rm out}\cap \Omega)
\big),
\quad\tand\quad
\cZ^j_k =\cW^j_k \cap H^1_0(\Omega),
\eeqs
where $\Omegain$ is the penetrable obstacle, $\Omegaout$ its exterior, and $\Omega$ the (truncated) computational domain containing $\Omegain$;   
see \S\ref{sec:spaces} below.
\end{example}

\subsection{Local properties}\label{sec:local}

\ble\label{lem:balls}
Let $B_j, j=1,\ldots,N$ be open sets, and let 
\beq
\label{eq:defC}
\es{\Cover}:= \max\Big\{ |J| : B_{j_1} \cap \ldots \cap B_{j_J} \neq \emptyset \text{ with $j_1,\ldots,j_J$ distinct}\Big\}.
\eeq
Then
\beq\label{eq:lem_balls1}
(\es{\Cover})^{-1} \sum_{j=1}^N \N{v}^2_{\cH(B_j)} \leq \N{v}^2_{\cH(\cup_{j=1}^N B_j)} \leq \sum_{j=1}^N \N{v}^2_{\cH(B_j)}
\eeq
for all $v \in \cH(\cup_{j=1}^N B_j)$.
\ele

The notation $B_j$ is used because we use Lemma \ref{lem:balls} below with the $B_j$ either balls or balls intersected with some larger (fixed) open set. 

\

\bpf[Proof of Lemma \ref{lem:balls}]
 The second inequality in \eqref{eq:lem_balls1} follows immediately from the fact 
that the $\cH$ norm is the $L^2$ norm.
Given $x \in \cup_{j=1}^N B_j$ let $m(x)$ be the number of distinct $B_1,\ldots,B_N$ that contain $x$. Then
\beqs
\sum_{j=1}^N \N{v}^2_{\cH(B_j)}= \int_{\cup_{j=1}^N B_j} m(x) |v(x)|^2 \, \rd x 
\eeqs
and the first inequality in \eqref{eq:lem_balls1} follows since $|m(x)|\leq \es{\Cover}$.
\epf

\begin{assumption}\label{ass:balls}
The following holds with $\cY$ equal to either $\cW$ or $\cZ$. 
Given open sets $B_1,\ldots,B_N$, with $\es{\Cover}$ as in \eqref{eq:defC}, 
\beq\label{eq:lem_balls2}
(\es{\Cover})^{-1} \sum_{j=1}^N \N{v}^2_{\cY_k^j(B_j)} \leq \N{v}^2_{\cY_k^j(\cup_{j=1}^N B_j)} \leq \sum_{j=1}^N \N{v}^2_{\cY_k^j(B_j)}
\eeq
for all $v \in \cY_k^j(\cup_{j=1}^N B_j)$.
Furthermore, if $\supp \,u\cap \supp \,v=\emptyset$, then
\beq\label{eq:ZNormDisjoint}
\|u+v\|_{\cY_k^j}^2=\|u\|_{\cY_k^j}^2+\|v\|_{\cY_k^j}^2. 
\eeq
\end{assumption}

With $\partial_<$ defined by~\eqref{eq:defDistInf}, for $U\subset \Omega$ open, let 
\begin{align}\nonumber
\mathcal{Z}_k^{j,<}(U) &:= \overline{\big\{v \in \mathcal{Z}_k^j \textup{ s.t. } \supp \,v\subset\overline{U},\,\partial_<(\supp\,v, \overline{U}) > 0\big\}
}\\ \label{eq:<Space}
&\,\,=\overline{\big\{v \in \mathcal{Z}_k^j \textup{ s.t. } \supp \,v\subset U\cup (\partial U\cap\partial\Omega)\big\}}
\end{align}
(where the closures are taken with respect to the $\cZ^j_k$ norm).
Observe that the convention that $\partial_<(A,B)=+\infty$ when $B=\Omega$ implies that $\mathcal{Z}_k^{j,<}(\Omega) = \mathcal{Z}_k^j$.

For any $U\subset \Omega$, let
\beq\label{eq:subset_norm}
\|u\|_{\cZ_{k}^j(U)}:=\inf\Big\{ \|v\|_{\mathcal{Z}_k^j}\,:\, v|_U=u|_U,\, v\in \mathcal{Z}_k^j\Big\};
\eeq
observe that this definition implies that $\|u\|_{\cZ_{k}^j(U)}\leq \|u\|_{\cZ_{k}^j(V)}$ for $U\subset V$. 

For an open set $U \subset \Omega$ and $j \geq 0$, observe that \eqref{eq:dual} implies that 
\begin{equation}
	\label{eq:negNorm}
 	\Zjdual{u}{j}{U} = \sup_{v \in \mathcal{Z}_k^{j,<}(U) \setminus \{0\}} \frac{ |u(v)|}{\norm{v}_{\mathcal{Z}_k^j}}\,.
\end{equation} 
Since $\mathcal{Z}_k^{j,<}(\Omega) = \mathcal{Z}_k^j$, $\Zjdual{\cdot}{j}{\Omega}=\norm{\cdot}_{(\mathcal{Z}^j_k)^*}$.

We define $\cW^{j,<}_k(U)$ and $\Wjdual{\cdot}{j}{U}$ analogously to \eqref{eq:<Space} and \eqref{eq:negNorm}.

\es{
\ble\label{lem:final}
For any open set $U \subset \Omega$ and any function $u \in \mathcal{Z}_k$,
\beq\label{eq:final1}
 	\Zjdual{u}{j}{U} \leq\Cemb \Wjdual{u}{j}{U}.
	\eeq
\ele

\begin{proof}
	Let $v \in \mathcal{Z}_k^j$ be non-zero, supported on $\overline{U}$, and such that $\partial_<(\textup{supp}\,v,U) > 0$. Then by \eqref{e:embedW} and the definition of $\mathcal{W}_k^{j,<}(U)$, $v \in \mathcal{W}_k^{j,<}(U)$ and furthermore
	\[\frac{\abs{\langle u,v\rangle_{\mathcal{H}}}}{\|v\|_{\mathcal{Z}_k^j}} \leq \Cemb \frac{\abs{\langle u,v\rangle_{\mathcal{H}}}}{\norm{u}_{\mathcal{W}^j_k}}\,.\]
	By definition of the dual $(\mathcal{W}^{j,<}_k(U))^*$ norm (defined analogously to \eqref{eq:negNorm}), 
	\[\frac{\abs{\langle u,v\rangle_{\mathcal{H}}}}{\|v\|_{\mathcal{Z}_k^j}} \leq \Cemb \Wjdual{u}{j}{U}\,,\]
	and the result follows since the set $\big\{v \in \mathcal{Z}_k^j \textup{ s.t. } \supp \,v\subset\overline{U},\,\partial_<(\supp\,v, \overline{U}) > 0\big\}$ is dense in $\mathcal{Z}_k^{j,<}(U)$ by its definition \eqref{eq:<Space}. 
\end{proof}
}

\subsection{Sesquilinear forms} We consider a family $(a_k)_{k \geq k_0}$ of {\em sesquilinear} forms $a_k:\mathcal{Z}_k\times \mathcal{Z}_k\to \mathbb{C}$ (i.e. $a_k(\lambda u,\mu v) = \lambda \overline{\mu} a_k(u,v)$), satisfying the following assumptions.

\begin{assumption}[Continuity and local coercivity]
	\label{ass:cont_coer}
	There exist positive constants $\Ccont$, $\ccoer$, and $\Ccoer$ such that
	\begin{equation}
		\label{eq:contAb}
		|a_k(u,v)|\leq \Ccont \N{u}_{\mathcal{Z}_k}\N{v}_{\mathcal{Z}_k} \quad\tfa u,v\in \mathcal{Z}_k \tand k\geq k_0
	\end{equation}
and if  $x_0\in \Omega$ and $r\leq \ccoer k^{-1}$ then
\beq\label{eq:coercivity1}
\Re \big\{a_k(v,v)\big\} \geq \Ccoer \N{v}^2_{\cZ_k}
\quad \tfa v\in \cZ^{<}_k(B(x_0,r)\cap\Omega) \tand k\geq k_0.
\eeq
\end{assumption}

\begin{assumption}[Elliptic regularity up to $\partial\Omega$ for the adjoint problem on $\cO(k^{-1})$ balls]\label{ass:er}
Given $x_0\in \Omega$, $r>0$, and $d>0$, let 
\beq\label{eq:Uballs}
U_0 := B(x_0,r)\cap \Omega\quad\tand\quad U_1 := B(x_0,r+d)\cap \Omega
\eeq
(so that $\partial_< (U_0,U_1)=d$).

Given $c>0$, and $\ell \in \mathbb{Z}^+$, there exists $\Cell>0$ such that
if $r+d \leq \ccoer k^{-1}$ and $r,d \geq c k^{-1}$,  then,
for all $u\in \mathcal{Z}_k^<({U}_1)$, 
	\begin{equation}
		\label{eq:ellipticRegB}
		\N{u}_{\mathcal{Z}_k^{j+2}(U_0)}\leq \Cell\Big(\N{u}_{\mathcal{H}}+\sup_{\substack{v\in \mathcal{Z}_k^<({U}_1) \euanspace \|v\|_{(\mathcal{W}_k^{j})^ *}=1}}| a_k(v,u)|\Big),\quad
		j=0,\dots,\ell.
	\end{equation}
\end{assumption}

The following assumption involves ``localizing" operators that commute with $a_k$ in a weak sense. 
In all the specific examples in \S\ref{sec:examples}, these operators are cut-off functions, but for transmission problems these functions must 
be defined piecewise and satisfy certain properties across the interface; see Lemma \ref{lem:ass_commut}.

\begin{assumption}[Compatible localisers]
	\label{ass:commut}
Given $x_0\in \Omega$, $r>0$, and $d>0$, let  $U_0$ and $U_1$ be as in \eqref{eq:Uballs}.
		There exist constants $\Cdagger > 0$ and $\Ccom > 0$ and a family $\{\psi(U_0,U_1)\}_{U_0 \subset U_1}$ of localisers indexed by $U_0 \subset U_1 \subset \Omega$, and hence indexed by $x_0, r, d$, 
		where each $\psi = \psi(U_0,U_1): \mathcal{H} \to \mathcal{H}$ is a self-adjoint operator with the following properties
	\begin{itemize}
		\item[(i)] 
With $U_0':= B(x_0, r+d/4) \cap \Omega$ and $U_1':= B(x_0,r+3d/4)\cap \Omega$, 
for all $u \in \mathcal{H}$, 
\[ u \equiv \psi u \textup{ on } U'_0\,, \quad \psi u \equiv 0 \textup{ on } (U'_1)^c\,.\]
		\item[(ii)] $\psi$ maps $\mathcal{Z}_k^j$ to itself continuously, with
		\begin{equation}
			\label{eq:improvedLeibniz}
			\norm{\psi  u}_{\mathcal{Z}_k^j} \leq C_\dagger\sum_{m=0}^{j} (kd)^{-(j-m)} \norm{u}_{\mathcal{Z}_k^m(U_1)} \quad \tfa j \in \{0,1,\ldots,\ell+2\},
\end{equation} 
\item[(iii)] for all $j = 1,\ldots,\ell+1$, and $u,v \in \mathcal{Z}_k$, 
	\end{itemize}
\begin{align}\nonumber
&\abs{a_k(\psi u,v) - a_k(u,\psi v)}\\ 
&\leq \frac{\Ccom}{kd} \left( \sum_{m= 0}^{j-1} (kd)^{-m} \right) \min\Big(\norm{u}_{\mathcal{Z}_k^j(U_1\setminus \overline{U_0})}\Wjdual{v}{(j-1)}{U_1\setminus \overline{U_0}},\Wjdual{u}{(j-1)}{U_1\setminus \overline{U_0}} \norm{v}_{\mathcal{Z}_k^j(U_1\setminus \overline{U_0})}\Big).
\label{eq:commute}
		\end{align}
		\end{assumption}

\begin{corollary}[Mapping properties of the adjoint solution operator on $\cO(k^{-1})$ balls]
	\label{cor:solution}
	
Let $U_0$ and $U_1$ be given by \eqref{eq:Uballs}. Suppose that $r+ d\leq \ccoer k^{-1}$. 
	Let $\mathcal{R}^*:(\cZ^{<}_k({U}_1))^* \to \mathcal{Z}_k^<({U}_1)$ be the operator defined by the variational problem
	\[a_k(v,\cR^* g) = \overline{\langle g,v\rangle} \quad \tfa v \in \mathcal{Z}_k^<({U}_1)\,.\]
	Then there exists $\Cres > 0$ such that
	\[\norm{\cR^* g}_{\mathcal{Z}_k^{j+2}(U_0)} \leq \Cres \norm{g}_{\cW^j_k} \quad \tfa g \in \mathcal{W}_k^{j,<}({U}_1)\,, \,\, j = 0\,,\ldots\,, \ell \,.\]
\end{corollary}
\begin{proof}
Since $r+d\leq \ccoer k^{-1}$, $\mathcal{R}^*$ is well-defined by Assumption \ref{ass:cont_coer} and the Lax--Milgram lemma, and 
	\beq\label{eq:hungry1}
	\norm{\cR^* g}_{\mathcal{Z}_k} \leq \frac{1}{\Ccoer} \norm{g}_{(\mathcal{Z}_k^<(U_1))^*}\,.
\eeq
	Let $g \in \mathcal{W}_k^{j,<}(U_1)$. By \eqref{eq:ellipticRegB}, the definition of $\mathcal{R}^*$, and the triangle inequality, 
	\begin{align*}
	\norm{\mathcal{R}^*g}_{\mathcal{Z}^{j+2}_k(U_0)} &\leq \Cell \Big(\norm{\mathcal{R}^*g}_{\mathcal{H}} + 
	\sup_{v\in \mathcal{Z}_k^<({U}_1),\,\norm{v}_{(\mathcal{W}^{j}_k)^*} = 1} 
	\abs{\langle g,v\rangle} \Big)\leq \Cell  \big(\norm{\mathcal{R}^*g}_{\mathcal{H}} + \norm{g}_{\mathcal{W}_k^j}\big),
	\end{align*}
and the result follows from \eqref{eq:hungry1}.
\end{proof}

\subsection{Triangulation and finite-dimensional subspaces}

Let $\mathcal{T}$ be a regular triangulation (in the sense of, e.g., \cite[Page 61]{Ci:91}) of $\Omega$. For each element $K \in \mathcal{T}$, let $h_K:= \textup{diam}(K)$. For simplicity, we assume that 
\beq\label{eq:Cppw}
h_K k\leq \Cppw
\eeq
for some $\Cppw>0$ (where ``ppw'' standard for ``points per wavelength"); 
as discussed after Theorem \ref{thm:intro1} for standard finite-element spaces with $p$ fixed, $h_K k$ must be chosen as a decreasing function of $k$ to maintain accuracy and thus this assumption is not restrictive.

\begin{assumption}[Broken norms]\label{ass:bn}
	For each $u \in \mathcal{Z}_k$  and each element $K$ of $\mathcal{T}$, the restriction of $u$ to $K$ belongs to $H^1_k(K)$, and 
	\begin{equation}
		\label{eq:normsOnElements}
		\norm{u}_{\mathcal{Z}_k(K)} = \norm{u}_{H^1_k(K)} \quad \tfa K \in \mathcal{T}\,.
	\end{equation}
	For each $K \in \mathcal{T}$, we assume that $C^\infty_0(K) \subset \mathcal{Z}_k^j$ and that
	\begin{equation}
		\label{eq:controlHONorms}
		\norm{u}_{\mathcal{Z}_k^j} = \norm{u}_{H^j_k(K)} \quad \tfa u \in C^\infty_0(K)
	\end{equation} 
(where the first norm is defined by \eqref{eq:subset_norm}). 
	Furthermore, there exists a constant $C_{\rm loc}$ such that 
	\begin{equation}
		\label{ass:loc}
		\abs{a_k(u,v)} \leq \Cloc \sum_{K \in \mathcal{T}} \norm{u}_{H^1_k(K)} \norm{v}_{H^1_k(K)} \quad \tfa u,v \in \mathcal{Z}_k\,.
	\end{equation}
\end{assumption}

We fix a finite-dimensional space $V_h \subset \mathcal{Z}_k$ consisting of functions whose restrictions to each $K \in \mathcal{T}$ is in $C^\infty(\overline{K})$. For any open subset $U \subset \Omega$, let 
\[V_h^<(U) :=\mathcal{Z}_k^{<}(U) \cap V_h \,.\]
We introduce the following standard assumptions on $V_h$:
\begin{assumption}[Approximation property]
	\label{ass:ap}
	There exist constants $\Ckappa > 0$, $\Cp\in \mathbb{Z}^+$ and $\Capprox > 0$ such that the following holds.
	 For each $j \in \{1,\ldots,p+1\}$, given $u \in \mathcal{Z}_k^j$, there exists $u_h \in V_h$ such that
	\begin{equation}
		\label{eq:ap}
		\sum_{K \in \mathcal{T}} (h_Kk)^{2(m - j)}\norm{u - u_h}^2_{H^m_k(K)} \leq \Capprox\norm{u}_{\mathcal{Z}_k^j}^2 \quad\tfa u_h \in V_h\,, \,\, 0 \leq m \leq j \leq \Cp+1.
	\end{equation}
	Furthermore, if $U_0 \subset U_1$ are such that 
	$$\partial_<(U_0,U_1) > \Ckappa \max_{K \cap U_1 \neq \emptyset} h_K$$ and $\textup{supp}\,u \subset U_0$, then $u_h$ can be chosen in $V_h^<(U_1)$. 
\end{assumption}

\begin{assumption}[Super-approximation property]
	\label{ass:sa}
	For each $\Cdagger>0$, there exists constant $\Csuperk >0$ such that, with $\Cp$ and $\Ckappa$ as in Assumption \ref{ass:ap}, the following property holds for sets $U_0 \subset U_1 \subset \Omega$ 
given by \eqref{eq:Uballs} and
	satisfying
	\begin{equation}
		\label{eq:dU0U1}
d = \partial_<(U_0,U_1) > 4\Ckappa \max_{K \cap U_1 \neq \emptyset} h_K
	\end{equation}
	Let $\chi = \psi(U_0,U_1)$ be the localiser associated to $U_0$ and $U_1$ (given by Assumption \ref{ass:commut}).
 	Then for each $u_h \in V_h$, there exists $v_h \in V_h^<(U_1)$ such that, for all $K\in \mathcal{T}$, 
	\begin{equation}
		\label{eq:saH1k}
		\norm{\chi^2 u_h - v_h}_{H^1_k(K)} \leq \Csuperk \frac{h_K}{d}\left[\left(1 + \frac{1}{kd}\right) \norm{u_h}_{L^2(K)} + \norm{\chi u_h}_{H^1_k(K)}\right].
	\end{equation}
\end{assumption}

The constant $\Ckappa$ in Assumptions \ref{ass:ap} and \ref{ass:sa} is related to the ``stencil" of the chosen finite element, i.e., how large the support the finite element basis functions is.
For Lagrange finite-elements $\kappa=1$; see \S\ref{sec:verifySubspace} below.

For any open subset $U \subset \R^d$ and any $s\in \Rea$, we recall that $H^s(U)$ is defined as the set of restrictions to $U$ of elements of $H^s(\R^d)$, with a Hilbert structured inherited via 
\beq\label{eq:negativeNorm}
\norm{v}_{H^s_k(U)} := \inf_{V\in H^s_k(\Rea^d)\,:\, V|_{U} = v} \norm{V}_{H^s_k(\R^d)}\,.
\eeq
When $U$ is a Lipschitz domain,
\beq\label{eq:negativeNormEquiv}
\norm{u}_{H^{-s}_k(U)} \sim \sup_{\substack{\norm{v}_{H^s_k(\R^d)} = 1,\\ \supp v\subset U}} |(u,v)_{L^2}|,
\eeq
where $\sim$ denotes norm equivalence; i.e., $H^{-s}_k(U)$ is dual to $\widetilde{H}^s_k(U)$ defined as the closure of $C^\infty_{\rm comp}(U)$ in $H^s(\Rea^d)$; see 
 \cite[Page 77 and Theorem 3.30(i), Page 92]{Mc:00}.

\begin{assumption}[Inverse inequalities]
	\label{ass:ii}
Given $p\in \mathbb{Z}^+$ as in Assumption \ref{ass:ap}, 
  there exists 
$\Cinvk > 0$ such that, for all $K \in \mathcal{T}$ and $u_h \in V_h$,
	\begin{equation}
		\label{eq:iiH1k1}
		\norm{u_h}_{H^1_k(K)} \leq \frac{\Cinvk}{h_Kk} \norm{u_h}_{L^2(K)}
	\end{equation}
 and, for $0\leq s \leq p$,
	\begin{equation}
		\label{eq:iiH1k2}
\norm{u_h}_{L^2(K)} \leq  \frac{\Cinvk}{(h_Kk)^{s}} \norm{u_h}_{H_k^{-s}(K)}.
	\end{equation}
\end{assumption}

\section{Statement of the main results}\label{sec:statement}

\begin{theorem}[General version of Theorem \ref{thm:intro1}]\label{thm:WDGS1}
	Given positive constants $\Ccont$, $\Ccoer$, $\ccoer$, $\Ckappa$, $\Cp$, $\Cinvk$, $\Cpw$, $\Csuperk$, $\Ccom> 0$, and some $C_0 > 0$, there exists a constant $C_\star > 0$ such that the following holds. Let $(a_k)_{k \geq k_0}$ and $V_h \subset \mathcal{Z}_k$ satisfy Assumptions \ref{ass:cont_coer}, \ref{ass:commut}, \ref{ass:ap}, \ref{ass:sa}, \ref{ass:ii}, and \ref{ass:loc}, with the constants above. Let $\Omega_0 \subset \Omega_1 \subset \Omega$ be arbitrary subsets such that 
	\beq
		\label{eq:WDGS1:assd}
d:= \partial_<(\Omega_0,\Omega_1)\geq \frac{C_0}{ k} \quad \tand \quad 
\max_{K \cap \Omega_1 \neq \emptyset} h_K \leq \frac{C_1}{k}
\eeq
where
\beqs
C_1:=
\frac{1}{4\max \big\{1,8\Ckappa\big\}}
\min\bigg\{ \frac{C_0}{2}, \frac{\es{4}\ccoer}{3}\bigg\}. 
\eeqs
If $k \geq k_0$ and $u \in \mathcal{Z}_k$ and $u_h \in V_h$ are such that 
	\beq\label{eq:GOG1}
	a_k(u-u_h,v_h) = 0 \quad \tfa v_h \in V_h^<(\Omega_1),
	\eeq
	then
\begin{align}
	&\norm{u - u_h}_{\mathcal{Z}_k(\Omega_0)} \leq \Cstar \Big(\min_{w_h \in V_h}\norm{u - w_h}_{\mathcal{Z}_k(\Omega_1)} 
	+ \norm{u - u_h}_{\mathcal{H}(\Omega_1)}\Big).\label{eq:WDGS1}
	\end{align}
\end{theorem}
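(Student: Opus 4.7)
The plan is to follow the classical Nitsche--Schatz / Demlow--Guzman--Schatz strategy in the $k$-weighted framework: prove quasi-optimality on a single $k^{-1}$-scale ball (where the coercivity of $a_k$ is available via~\eqref{eq:coercivity1}), then promote it to $(\Omega_0,\Omega_1)$ by a covering argument and an iteration across a chain of nested subdomains.

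\emph{Step 1 (Small-ball estimate).} Fix $x_0\in\Omega$ and radii $r,d$ with $r+d\leq \ccoer k^{-1}$ and $d\geq c_\star k^{-1}$ (a constant to be chosen), set $U_0:=B(x_0,r)\cap\Omega$, $U_1:=B(x_0,r+d)\cap\Omega$, and let $\chi:=\psi(U_0,U_1)$ be the localiser from Assumption~\ref{ass:commut}. Since $\chi\equiv 1$ on $U_0'\supset U_0$ we have $\|e\|_{\cZ_k(U_0)}\leq \|\chi e\|_{\cZ_k}$ for $e:=u-u_h$, and since $\chi e\in\cZ_k^<(U_1)$ local coercivity yields $\Ccoer\|\chi e\|_{\cZ_k}^2\leq \Re a_k(\chi e,\chi e)$. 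Writing
\begin{equation*}
a_k(\chi e,\chi e) = a_k(e,\chi^2 e) + \bigl[a_k(\chi e,\chi e)-a_k(e,\chi^2 e)\bigr],
\end{equation*}
the bracket is controlled by Assumption~\ref{ass:commut}(iii) with $j=1$ (noting $\Wjdual{\cdot}{0}{U}$ reduces to the $L^2$-norm since $\cW_k^0=\cH$), producing a contribution of order $(kd)^{-1}\|e\|_{\cH(U_1)}\|\chi e\|_{\cZ_k}$. For the principal term, pick any $w_h\in V_h$, split $\chi^2 e = \chi^2(u-w_h)-\chi^2(u_h-w_h)$, super-approximate the FEM piece $\chi^2(u_h-w_h)$ by some $v_h\in V_h^<(U_1)$ via Assumption~\ref{ass:sa}, and invoke Galerkin orthogonality~\eqref{eq:GOG1} (valid since $V_h^<(U_1)\subset V_h^<(\Omega_1)$) to eliminate $a_k(e,v_h)$. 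Combining continuity~\eqref{eq:contAb}, the Leibniz bound~\eqref{eq:improvedLeibniz}, the super-approximation bound~\eqref{eq:saH1k}, and Young's inequality yields
\begin{equation*}
\|e\|_{\cZ_k(U_0)}^2 \leq \eta^2\|e\|_{\cZ_k(U_1)}^2 + C\bigl[\|u-w_h\|_{\cZ_k(U_1)}^2 + \|e\|_{\cH(U_1)}^2\bigr],
\end{equation*}
where $\eta$ is controlled by $h_K/d$, which is $\lesssim C_1/c_\star$ by the hypotheses; the specific formula for $C_1$ in the theorem is tuned so that $\eta$ is as small as the later absorption step requires.

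\emph{Step 2 (Covering).} Cover $\Omega_0$ by finitely many inner balls $U_0^{(j)}=B(x_j,r_0)\cap\Omega$ with $r_0\sim k^{-1}$ and outer balls $U_1^{(j)}=B(x_j,r_0+d_0)\cap\Omega$ with $r_0+d_0\leq \ccoer k^{-1}$, such that the $U_1^{(j)}$ have bounded overlap multiplicity (depending only on dimension) and all lie in a slight enlargement $\Omega_0^+\subset\Omega_1$ with $\partial_<(\Omega_0,\Omega_0^+)\sim k^{-1}$. Applying Step~1 to each pair and summing via Assumption~\ref{ass:balls} gives
\begin{equation*}
\|e\|_{\cZ_k(\Omega_0)}^2 \leq C\eta^2\|e\|_{\cZ_k(\Omega_0^+)}^2 + C\bigl[\|u-w_h\|_{\cZ_k(\Omega_0^+)}^2 + \|e\|_{\cH(\Omega_0^+)}^2\bigr].
\end{equation*}

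\emph{Step 3 (Iteration and absorption).} Build a nested chain $\Omega_0=W^{(0)}\subset W^{(1)}\subset\ldots\subset W^{(N)}\subset\Omega_1$ with $\partial_<(W^{(j)},W^{(j+1)})\sim k^{-1}$ and $N$ depending only on $C_0$. Iterating Step~2 produces
\begin{equation*}
\|e\|_{\cZ_k(\Omega_0)}^2 \leq (C\eta^2)^N\|e\|_{\cZ_k(\Omega_1)}^2 + \frac{C}{1-C\eta^2}\bigl[\|u-w_h\|_{\cZ_k(\Omega_1)}^2 + \|e\|_{\cH(\Omega_1)}^2\bigr]
\end{equation*}
provided $\eta$ is small enough that $C\eta^2<1$. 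The residual $(C\eta^2)^N\|e\|_{\cZ_k(\Omega_1)}^2$ is handled by the triangle inequality $\|e\|_{\cZ_k(\Omega_1)}\leq \|u-w_h\|_{\cZ_k(\Omega_1)}+\|u_h-w_h\|_{\cZ_k(\Omega_1)}$, applying the inverse inequality~\eqref{eq:iiH1k1} element-by-element to $u_h-w_h\in V_h$, and using the approximation property~\eqref{eq:ap} to convert the resulting $(h_K k)^{-1}\|u-w_h\|_{L^2(K)}$ back to the best-approximation $\cZ_k$-norm, before absorbing the contribution into the right-hand side.

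I expect Step~3 to be the main obstacle: the iteration only shrinks the $\|e\|_{\cZ_k(\Omega_1)}$ coefficient by the fixed factor $(C\eta^2)^N$ (with $N$ bounded in terms of $C_0$), while the inverse-inequality absorption introduces a factor $(hk)^{-1}$ which is bounded below but potentially large. The specific formula defining $C_1$ in the theorem is designed to make $\eta$ small enough that these two effects balance, producing a $\Cstar$ that is independent of $k$ and depends only on the declared constants. Assumption~\ref{ass:commut}(iii) plays the crucial technical role of replacing the integration-by-parts identity used in the classical smooth-coefficient proof, and is what allows the whole scheme to accommodate the discontinuous coefficients arising in transmission problems covered by the general framework of Section~\ref{sec:assumptions}.
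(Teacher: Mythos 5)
Your overall architecture (estimate on $k^{-1}$-scale balls where \eqref{eq:coercivity1} gives coercivity, then a bounded-overlap covering) matches the paper's, but the execution of the small-ball step contains a gap that you have correctly identified as the danger point, and it is fatal as proposed. The problem is the cross term $a_k(e,\chi^2(u-w_h))$ in your splitting of $a_k(e,\chi^2 e)$. Since $u-w_h$ is not a discrete function, neither super-approximation nor the inverse inequality applies to it; the only available bound is locality plus Cauchy--Schwarz, which yields $C\norm{e}_{\mathcal{Z}_k(U_1)}\norm{\chi^2(u-w_h)}_{\mathcal{Z}_k}$, and Young's inequality then forces the coefficient $\eta^2$ of $\norm{e}^2_{\mathcal{Z}_k(U_1)}$ to be an arbitrary but \emph{fixed} constant $\varepsilon$ --- it cannot be taken of size $(h_K/d)^2$ without paying a factor $d/h_K$ on the best-approximation term. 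Consequently the residual after $N$ iterations is $\varepsilon^N\norm{e}^2_{\mathcal{Z}_k(\Omega_1)}$ with $\varepsilon, N$ fixed, and your proposed absorption via \eqref{eq:iiH1k1} produces a factor $\varepsilon^N(h_Kk)^{-2}$. The hypotheses give only an upper bound $h_Kk\leq C_1$, no lower bound, so $(h_Kk)^{-2}$ is unbounded and no tuning of $C_1$ (which controls the upper bound only) can rescue this. The step where the classical proofs compensate $(h_Kk)^{-1}$ by one power of $h_K/d$ is only available when the function being super-approximated and inverted is itself in $V_h$.

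The paper's proof of Lemma \ref{lem:WDGS} avoids the iteration entirely by first replacing $u$ by a \emph{local Galerkin projection}: with $\chi=\psi(\Omega_{1/2},\Omega_1)$, define $\Pi_h:\mathcal{Z}_k\to V_h^<(\Omega_1)$ by \eqref{eq:proj1} (well defined by Lax--Milgram, since \eqref{eq:contAb} and \eqref{eq:coercivity1} make $a_k$ coercive on $V_h^<(\Omega_1)$ for $3d/4<\ccoer k^{-1}$), and split $u-u_h=(\chi u-\Pi_h(\chi u))+z_h$ with $z_h:=\Pi_h(\chi u)-u_h$. The first piece is controlled by C\'ea's lemma on the coercive local problem with no error term on a larger set; the second piece is a genuine FEM function satisfying the discrete orthogonality \eqref{eq:orthoCaccio} on $\Omega_{1/2}$, so the discrete Caccioppoli estimate of Lemma \ref{lem:Caccioppoli} applies and gives $\norm{z_h}_{\mathcal{Z}_k(\Omega_0)}\leq C(kd)^{-1}\norm{z_h}_{\mathcal{H}(\Omega_{1/2})}$ --- here the inverse inequality is legitimately applied to $z_h$ because $z_h\in V_h$, and the $\varepsilon$-absorption happens on the \emph{same} quantity $\norm{\chi z_h}^2_{\mathcal{Z}_k}$ rather than on a larger domain. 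A single covering then finishes the proof; your Steps 1 and 3 should be restructured around this discrete-harmonic reduction (which is also how \cite{NiSc:74}, \cite{Wa:91}, and \cite{DeGuSc:11} proceed), after which the commutator bound \eqref{eq:commute} and Assumptions \ref{ass:sa} and \ref{ass:ii} are used exactly where you intended to use them.
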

If the triangulation $\mathcal{T}$ is furthermore locally quasi-uniform and Assumption \ref{ass:er} (local elliptic regularity) holds, 
then the result can be improved by weakening the $\mathcal{H}$ norm of the error on the right-hand side.

\begin{theorem}[General version of Theorem \ref{thm:intro2}]
	\label{thm:WDGS2}
		Given positive constants $\Ccont$, $\Ccoer$, $\ccoer$, $\Cell$, $\Ckappa$, $\Cp$, $\Cinvk$, $\Cpw$, $\Csuperk$, $\Ccom, \es{\Capprox}> 0$, and some $C_0 > 0$, there exists a constant $C_\star > 0$ such that the following holds. Let $(a_k)_{k \geq k_0}$ and $V_h \subset \mathcal{Z}_k$ satisfy Assumptions \ref{ass:cont_coer}, \ref{ass:er}, \ref{ass:commut}, \ref{ass:ap}, \ref{ass:sa}, \ref{ass:ii}, and \ref{ass:loc}, with the constants above. Let $\Omega_0 \subset \Omega_1 \subset \Omega$ be arbitrary subsets such that 
\beqs
d:= \partial_<(\Omega_0,\Omega_1)\geq \frac{C_0}{ k} \quad \tand \quad
\max_{K \cap \Omega_1 \neq \emptyset} h_K \leq \frac{C_1}{k}
\eeqs
where
\beqs
C_1:=
\frac{1}{48(\ell+2)\max \big\{1,8\Ckappa\big\}}
\min\bigg\{ \frac{C_0}{2}, 2\ccoer\bigg\}. 
\eeqs
Assume further that $\mathcal{T}$ is {\em quasi-uniform on scale $k^{-1}$}, in the sense that, for every ball $B$ of radius at most $3\ccoer/(2k)$,
the bound \eqref{eq:AssQuThm} holds.
If $k \geq k_0$ and $u \in \mathcal{Z}_k$ and $u_h \in V_h$ are such that \eqref{eq:GOG1} holds, then
	\begin{equation}
\norm{u - u_h}_{\mathcal{Z}_k(\Omega_0)} 
\leq \Cstar 
		\bigg(\min_{w_h \in V_h} 
		\norm{u - w_h}_{\mathcal{Z}_k(\Omega_1)} 
		+\Wjdual{u-u_h}{\newell+1}{\Omega_1}\bigg)\label{eq:WDGS3}
	\end{equation}
where
	$\newell:= \min\{\ell,p-1\}$.
\end{theorem}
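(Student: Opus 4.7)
The plan is to combine Theorem \ref{thm:WDGS1} --- which establishes \eqref{eq:WDGS3} in the case $j = 0$, since $\Wjdual{\cdot}{0}{U}$ coincides with $\|\cdot\|_{\mathcal{H}(U)}$ --- with a duality (Aubin--Nitsche) argument iterated $\newell + 1$ times, each step upgrading a $\Wjdual{\cdot}{j}{\cdot}$-norm remainder to $\Wjdual{\cdot}{j+1}{\cdot}$ on a slightly larger subdomain. Setting $e := u - u_h$, the key new ingredient is the following \emph{lowering lemma}: for each $j \in \{0, 1, \ldots, \newell\}$ and any nested pair $U \subset U'$ with $\partial_<(U, U') \gtrsim h$,
\begin{equation*}
\Wjdual{e}{j}{U} \leq C \cdot hk \, \|e\|_{\mathcal{Z}_k(U')} + C \Wjdual{e}{j+1}{U'}.
\end{equation*}
Combining this lemma with Theorem \ref{thm:WDGS1} across a telescope of $\ell + 2$ nested subdomains sandwiched between $\Omega_0$ and $\Omega_1$, and then applying a ``kick-back'' absorption argument valid whenever $hk$ is sufficiently small relative to $\ell + 2$, yields \eqref{eq:WDGS3}. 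It is precisely the requirement that a geometric series of length $\newell + 1 \leq \ell + 1$ be summable with terms of ratio $C\,hk$, together with the need for each gap in the telescope to be at least a fixed multiple of $h_K$ (to allow super-approximation), that forces the factor $1/(\ell + 2)$ in the definition of $C_1$.

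To prove the lowering lemma, I would use the dual characterisation \eqref{eq:negNorm}: pick $\phi \in \mathcal{W}_k^{j,<}(U)$ with $\|\phi\|_{\mathcal{W}_k^j} = 1$, and bound $|\langle e, \phi\rangle|$. Since no global adjoint resolvent is available, patch local ones together. Cover $\overline{U}$ by balls $B_i$ of radius $O(k^{-1})$ with finite overlap (justified by the quasi-uniformity \eqref{eq:AssQuThm}) and, using the compatible localisers $\psi_i$ from Assumption \ref{ass:commut}, introduce a partition of unity subordinate to slightly enlarged balls $B_i'$. Decompose $\phi = \sum_i \phi_i$ with $\phi_i = \psi_i \phi$ and, on each $B_i'$, apply Corollary \ref{cor:resolvent} to obtain $w_i \in \mathcal{Z}_k^<(B_i')$ with $a_k(v, w_i) = \langle v, \phi_i\rangle$ for all $v \in \mathcal{Z}_k^<(B_i')$ and $\|w_i\|_{\mathcal{Z}_k^{j+2}(B_i)} \leq \Cres \|\phi_i\|_{\mathcal{W}_k^j}$.

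Set $w := \sum_i w_i$ (which is uniformly bounded in $\mathcal{Z}_k^{j+2}(U')$ by finite overlap), and take $w_h \in V_h^<(U')$ to be its approximation from Assumption \ref{ass:ap}, so that $\|w - w_h\|_{\mathcal{Z}_k(U')} \lesssim hk$. Since \eqref{eq:GOG1} applies to any test function in $V_h^<(\Omega_1) \supset V_h^<(U')$, Galerkin orthogonality combined with the relevant cutoff manipulations produces
\begin{equation*}
\langle e, \phi\rangle = a_k(e, w - w_h) + R,
\end{equation*}
where $R$ collects all the residues arising from the localisers (namely the fact that $\psi_i e$ is not literally a valid test function in $\mathcal{Z}_k^<(B_i')$, and that $\psi_i$ does not commute exactly with $a_k$). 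The main term satisfies $|a_k(e, w - w_h)| \leq \Ccont \|e\|_{\mathcal{Z}_k(U')} \cdot hk$ by Assumption \ref{ass:cont_coer}, while the residue $R$ is bounded by $\Wjdual{e}{j+1}{U' \setminus U}$ via the compatibility estimate \eqref{eq:commute} applied at precisely the index that shaves one derivative off the $v$-slot, together with the super-approximation Assumption \ref{ass:sa} used to approximate products of cutoffs with finite-element functions.

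The main obstacle is the careful coordination of the three $O(k^{-1})$ length scales at play --- the mesh size $h_K$, the radius of the local adjoint-resolvent balls from Corollary \ref{cor:resolvent}, and the inter-subdomain gap --- through $\newell + 1$ iterations without the accumulated constants swamping the $hk$ smallness needed for the kick-back. The super-approximation Assumption \ref{ass:sa} is essential for replacing $\psi_i u_h$ (not itself in $V_h$) by a genuine element of $V_h^<$ without losing a power of $h^{-1}$, and the compatibility estimate \eqref{eq:commute} must be invoked at exactly the right index at each iteration to shave off the correct order from the negative-norm scale. Finally, the quasi-uniformity on scale $k^{-1}$ (rather than mere shape-regularity, which sufficed for Theorem \ref{thm:WDGS1}) is needed precisely because the ball covers used for the local adjoint problems interact nontrivially with the mesh on this scale, and we need the inverse inequality \eqref{eq:iiH1k2} to behave uniformly across the cover.
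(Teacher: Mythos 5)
The broad plan---iterating a duality step through a telescope of nested subdomains and paying a factor of $1/(\ell+2)$ in $C_1$ for the depth of the telescope---is in the right spirit, and your explanation of why $C_1$ scales with $1/(\ell+2)$ matches the paper. However, the argument has a genuine gap in the kick-back step, and it misses the key structural idea that makes the paper's proof close.

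Your ``lowering lemma'' $\Wjdual{e}{j}{U} \leq C\,hk\,\|e\|_{\mathcal{Z}_k(U')} + C\Wjdual{e}{j+1}{U'}$ is applied to $e=u-u_h$ directly, so after iterating you are left with terms of the form $(hk)\|e\|_{\mathcal{Z}_k(\Omega_j)}$ over \emph{strictly larger} sets $\Omega_j\supset\Omega_0$. These cannot be absorbed into the left-hand side $\|e\|_{\mathcal{Z}_k(\Omega_0)}$, because the right-hand side term is taken over a larger set and is therefore \emph{bigger}, not smaller; the smallness of $hk$ does not help. (This is the standard obstruction: the global Aubin--Nitsche kick-back works because the same domain appears on both sides, and that is precisely what fails here.) Attempting to iterate Theorem \ref{thm:WDGS1} on each $\Omega_j$ against an even larger $\Omega_j'$ regresses forever.

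The paper's proof of Theorem \ref{thm:WDGS2} avoids this by \emph{not} working with $e$ in the negative-norm stage. As in Lemma \ref{lem:WDGS}, it first splits $e$ into $(\mathrm{Id}-\Pi_h)(\chi u)$ plus a \emph{discrete} function $z_h:=\Pi_h(\chi u)-u_h$ that satisfies the local Galerkin orthogonality \eqref{eq:orthoCaccio}. It then establishes a negative-norm Caccioppoli estimate for $z_h$ (Lemma \ref{lem:CaccioppoliNegative2}): the chain $\Wjdual{z_h}{j}{U_j'}\lesssim (kd)^{-(j+2)}\Wjdual{z_h}{j+1}{U'_{j+1}}$ has \emph{no leftover $\|z_h\|_{\mathcal{Z}_k}$ term} precisely because the inverse inequality \eqref{eq:iiH1k2}---which is only valid for discrete functions---converts the $(hk)^{j+1}\|z_h\|_{L^2(K)}$ coming from the duality/approximation step into $\|z_h\|_{H^{-(j+1)}_k(K)}$, so the chain closes cleanly. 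Your plan does cite \eqref{eq:iiH1k2}, but you never pass to a discrete object to which it could legitimately be applied; $e$ itself is not in $V_h$.

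A secondary issue is the patching of local adjoint solutions. Setting $w=\sum_i w_i$ where each $w_i$ satisfies $a_k(v,w_i)=\langle v,\phi_i\rangle$ \emph{only for $v$ supported in $B_i'$} does not give $a_k(e,w)=\langle e,\phi\rangle$ for a general $e$; the mismatch is not just a cutoff commutator but a genuine failure of the variational identity. The paper never patches: in Lemma \ref{lem:CaccioppoliNegative2}, it fixes a \emph{single} test function $v$ supported in one small ball $U_j'$, applies $\mathcal{R}^*$ \emph{on that ball alone}, and only then localises $z_h$ with one cutoff $\chi$, so that $(z_h,v)=(\chi z_h,v)=a_k(\chi z_h,\mathcal{R}^*v)$ holds exactly. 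The covering argument by $O(k^{-1})$ balls happens only at the very end (in the proof of the theorem from Lemma \ref{lem:WDGS2}), where it is applied to the \emph{output} bound and not to the adjoint construction.

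In short: the crucial missing step is the reduction to the discrete function $z_h=\Pi_h(\chi u)-u_h$ so that inverse inequalities become available, which is what closes the negative-norm chain without any kick-back.
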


\bre[Galerkin orthogonality]
\label{rem:localGOG}
Given $G \in (\mathcal{Z}_k)^*$, 
if $u\in \mathcal{Z}_k$ satisfies 
\[a_k(u,v) = G(v) \quad \tfa v \in \mathcal{Z}_k\]
and if $u_h$ is a solution to the Galerkin equations
\beq\label{eq:PML_Galerkin}
\text{ find } u_h \in V_h \,\tst\, a(u_h,v_h) = G(v_h) \quad\tfa v_h \in V_h,
\eeq
then 
\beqs
a(u-u_h,v_h) = 0 \quad\tfa v_h \in V_h,
\eeqs
and thus
$u - u_h$ satisfies \eqref{eq:GOG1} since $V_h^<(U_1) \subset V_h$. 
\ere

\bre[The dependence of the constant $C^*$ on the subspace $V_h$]\label{rem:subspace}
We have stated Theorem \ref{thm:WDGS1} for a fixed finite-dimensional subspace $V_h \subset \mathcal{Z}_k$. The key point is that the constant $C^*$ depends quantitatively on $V_h$ only through the constants in the assumptions of Section \ref{sec:assumptions}. In general, one is interested in applying Theorem \ref{thm:WDGS1} to a \emph{family} of subspaces indexed by some parameter (e.g., the mesh parameter $h$ of a sequence of quasi-uniform triangulations $(\mathcal{T}_h)_{h > 0}$ of $\Omega$). The idea is that the bound \eqref{eq:WDGS1} will hold with a common constant $C^*$ for all subspaces of the sequence, provided that the assumptions of Section \ref{sec:assumptions} hold uniformly for all of those subspaces. 
We show in \S\ref{sec:verifySubspace} that these assumptions do \es{indeed} hold uniformly for standard choices of discretisation.
\ere

In the rest of the paper we use the letter $C$ in estimates of the form $a \leq Cb$ to represent a generic positive constant, whose numerical value is allowed to change from one place to another, but which can be expressed as a function of the constants in the assumptions of Section \ref{sec:assumptions}.

\section{Caccioppoli estimates}\label{sec:Caccioppoli}
  
The central idea in the proofs of Theorems \ref{thm:WDGS1}-\ref{thm:WDGS2} is to apply 
a discrete version of the classical Caccioppoli inequality to a solution of the Helmholtz equation at the discrete level.

In this section we always assume (without stating explicitly) that $(a_k)_{k \geq k_0}$ and $V_h$ satisfy the assumptions of Section \ref{sec:assumptions}. 

\begin{lemma}[Caccioppoli estimate in the $L^2$ norm on $\cO(k^{-1})$ balls]
	\label{lem:Caccioppoli}
	There exists a constant $\Cca > 0$ (whose value depends only on the constants appearing in the assumptions of Section \ref{sec:assumptions} apart from $\Cell$) such that, given $r,d>0$, if $U_0$ and $U_1$ are as in \eqref{eq:Uballs} (so that $d= \partial_<(U_0,U_1)$), 
	\beq\label{eq:dist4h}
d \geq 
\max\left\{1, 8\kappa\right\}  \max_{K \cap U_1 \neq \emptyset }h_K,
\eeq
and
\beq\label{eq:ccoer}
r+ \frac{d}{2} \leq \frac{\ccoer}{k},
\eeq
	then the following holds. If  $z_h \in V_h$ satisfies 
	\begin{equation}
		\label{eq:orthoCaccio}
	a_k(z_h,v_h) = 0 \quad \textup{for all } v_h \in V_h^<(U_1)
	\end{equation}
and $k\geq k_0$, then 
	\begin{equation}
		\label{eq:Caccioppoli}
		\norm{z_h}_{\mathcal{Z}_k(U_0)} \leq \frac{\Cca}{kd}\norm{z_h}_{\mathcal{H}(U_1)}.
	\end{equation}
\end{lemma}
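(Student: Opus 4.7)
\emph{Plan.} I would adapt the classical Caccioppoli inequality to the discrete setting. At the continuous level, one tests the homogeneous equation against $\chi^2 z$ for a cut-off $\chi$ equal to $1$ on $U_0$ and supported in $U_1$: the product rule gives $a_k(\chi z,\chi z)-a_k(z,\chi^2 z)=\int k^{-2}|\nabla\chi|^2|z|^2\leq C(kd)^{-2}\|z\|_{L^2(U_1)}^2$, and local coercivity of $a_k$ on the support of $\chi z$ closes the argument. Discretely I make three modifications: (i) take $\chi$ to be the localiser from Assumption~\ref{ass:commut}; (ii) replace $\chi^2 z_h\notin V_h$ by a super-approximant $v_h$ that lies in a subspace of $V_h^<(U_1)$ on which \eqref{eq:orthoCaccio} applies; (iii) replace the product-rule commutator identity by Assumption~\ref{ass:commut}(iii). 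The hypothesis \eqref{eq:ccoer} plays a double role: it places $\chi z_h$ in the region where local coercivity \eqref{eq:coercivity1} applies, and equivalently it forces $kd\leq 2\ccoer$, so that every factor $(1+(kd)^{-1})/(kd)$ arising from the super-approximation bound can be treated as $\leq C(kd)^{-2}$.

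\emph{Setup.} Introduce the intermediate set $\widetilde U_1:=B(x_0,r+d/2)\cap\Omega$, so that $\widetilde d:=\partial_<(U_0,\widetilde U_1)=d/2$, and set $\chi:=\psi(U_0,\widetilde U_1)$. By Assumption~\ref{ass:commut}(i), $\chi\equiv 1$ on $U_0$ and $\supp\chi\subset\overline{B(x_0,r+3d/8)\cap\Omega}$; since $r+3d/8<r+d/2\leq\ccoer/k$, the function $\chi z_h$ lies in $\mathcal{Z}_k^<(B(x_0,r+d/2)\cap\Omega)$, and \eqref{eq:coercivity1} yields $\Ccoer\|\chi z_h\|_{\mathcal{Z}_k}^2\leq\Re a_k(\chi z_h,\chi z_h)$. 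The mesh hypothesis \eqref{eq:dist4h} implies $\widetilde d\geq 4\Ckappa\max_{K\cap\widetilde U_1\neq\emptyset} h_K$, so Assumption~\ref{ass:sa} applied to the pair $(U_0,\widetilde U_1)$ and to $u_h=z_h$ produces a $v_h\in V_h^<(\widetilde U_1)\subset V_h^<(U_1)$ satisfying the elementwise bound \eqref{eq:saH1k} (with $d$ there replaced by $\widetilde d$); \eqref{eq:orthoCaccio} then gives $a_k(z_h,v_h)=0$.

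\emph{Estimation and closing.} Write
\[
\Re a_k(\chi z_h,\chi z_h)=\Re\big[a_k(\chi z_h,\chi z_h)-a_k(z_h,\chi^2 z_h)\big]+\Re a_k(z_h,\chi^2 z_h-v_h).
\]
For the first bracket, apply Assumption~\ref{ass:commut}(iii) with $j=1$, $\psi=\chi$, $u=z_h$, $v=\chi z_h$, taking the \emph{second} entry of the minimum so that the $\mathcal{Z}_k^1$-norm lands on the controllable factor $\chi z_h$; this yields at most $C(kd)^{-1}\|z_h\|_{\mathcal{H}(\widetilde U_1)}\|\chi z_h\|_{\mathcal{Z}_k}$. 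For the second term, use the local continuity \eqref{ass:loc}, the super-approximation \eqref{eq:saH1k}, Assumption~\ref{ass:balls}, and the inverse inequality \eqref{eq:iiH1k1} (which trades each factor $h_K\|z_h\|_{H^1_k(K)}$ for $\Cinvk k^{-1}\|z_h\|_{L^2(K)}$), together with $kd\leq 2\ccoer$ to absorb $(1+(kd)^{-1})/(kd)$ into $C(kd)^{-2}$: the resulting bound has the form $C(kd)^{-2}\|z_h\|_{\mathcal{H}(U_1)}^2+C(kd)^{-1}\|z_h\|_{\mathcal{H}(U_1)}\|\chi z_h\|_{\mathcal{Z}_k}$. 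A Young's inequality absorbs $\|\chi z_h\|_{\mathcal{Z}_k}$ into the left-hand side and produces $\|\chi z_h\|_{\mathcal{Z}_k}\leq C(kd)^{-1}\|z_h\|_{\mathcal{H}(U_1)}$; since $\chi\equiv 1$ on $U_0$, the subset-norm definition \eqref{eq:subset_norm} gives $\|z_h\|_{\mathcal{Z}_k(U_0)}\leq\|\chi z_h\|_{\mathcal{Z}_k}$, which is \eqref{eq:Caccioppoli}.

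\emph{Main obstacle.} The most delicate point is the choice of the second entry in the commutator minimum: the other entry would place the $\mathcal{Z}_k^1$-norm on $z_h$, and replacing it via an inverse inequality would cost an unabsorbable prefactor $(h_Kk)^{-1}$. A secondary subtlety is the necessity of the intermediate scale $\widetilde U_1$: if $\psi(U_0,U_1)$ were used directly, the support of the localiser could extend to radius $r+3d/4$, exceeding $\ccoer/k$ under the bare hypothesis \eqref{eq:ccoer}.
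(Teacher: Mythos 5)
Your proof is correct and follows essentially the same route as the paper's: the same intermediate set $B(x_0,r+d/2)\cap\Omega$, the same localiser, the same split into a commutator term (handled via the second entry of the minimum in Assumption~\ref{ass:commut}(iii)) and a Galerkin-orthogonality/super-approximation term, closed by local coercivity, the inverse inequality \eqref{eq:iiH1k1} and Young's inequality. The only cosmetic difference is that you apply Assumption~\ref{ass:sa} to the pair $(U_0,\widetilde{U}_1)$ rather than to $(U_{1/2},U_1)$ as the paper does; both are admissible under \eqref{eq:dist4h}.
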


Note that the combination of \eqref{eq:dist4h} and \eqref{eq:ccoer} imply that $\max_{K\in U_1}h_K$ is bounded by a constant multiple of $k^{-1}$.

\begin{proof}[Proof of Lemma \ref{lem:Caccioppoli}]
Let 
\beqs
U_{1/2} := B(\es{x_0}, r+d/2)\cap \Omega
\eeqs
so that 
\beq\label{eq:dist_proof1}
\partial_<(U_0,U_{1/2})  = d/2, \quad \partial_<(U_{1/2},U_{1}) = d/2.
\eeq
Let $\chi = \psi(U_0,U_{1/2})$ be the localiser defined in Assumption \ref{ass:commut}. If we can show that 
\beq\label{eq:sufficientC}
\norm{\chi z_h}^2_{\mathcal{Z}_k} \leq \frac{C}{(kd)^2} \norm{z_h}^2_{\mathcal{H}(U_1)},
\eeq
then the result follows since, by \eqref{eq:subset_norm}, 
\beqs
\norm{z_h}^2_{\mathcal{Z}_k(U_0)} = \norm{\chi z_h}^2_{\mathcal{Z}_k(U_0)} \leq \norm{\chi z_h}_{\mathcal{Z}_k}^2.
\eeqs

The inequality \eqref{eq:ccoer} and the assumption \eqref{eq:coercivity1} imply that $a_k$ is coercive on $\cZ_k^<(U_{1/2})$. 
Since $\chi z_h \in \cZ_k^< (U_{1/2})$, 
\beqs
\norm{\chi z_h}_{\mathcal{Z}_k}^2 \leq  (\Ccoer)^{-1}\big| {a}_k(\chi z_h,\chi z_h)\big|.
\eeqs
Then, by \eqref{eq:commute} with $j=1$,
\beq
\norm{\chi z_h}_{\mathcal{Z}_k}^2
\leq (\Ccoer)^{-1}\big| a_k (z_h, \chi^2 z_h)\big| + \Ccom (kd)^{-1} \N{z_h}_{\cH(U_{1/2})}\N{\chi z_h}_{\cZ_k},
\label{ineq0}
\eeq
so that, using
 	the inequality 
	\beq\label{eq:peterpaul1}
	2ab \leq \eps a^2+ b^2/\eps\quad\tfa a,b,\eps>0,
	\eeq
we have, 	for all $\varepsilon_1 > 0$,
	\begin{equation}
		\label{eq:ineq1}
	\norm{\chi z_h}^2_{\mathcal{Z}_k} \leq C\left(\abs{a_k(z_h,\chi^2 z_h)} +  \varepsilon_1^{-1}(kd)^{-2}\norm{z_h}^2_{\mathcal{H}(U_{1/2})}\right)+ {\varepsilon_1} \norm{\chi z_h}_{\mathcal{Z}_k}^2.
	\end{equation}

Let $w_h \in V_h^<(U_\es{1/2})$ be the finite-element super-approximation of $\chi^2 z_h$ provided by Assumption \ref{ass:sa} applied to the pair of sets $\es{U_0, U_{1/2}}$; note that the condition \eqref{eq:dU0U1} needed to apply Assumption \ref{ass:sa} becomes $d> 8 \kappa \max_{K\cap U_1\neq\emptyset} h_K$ by \eqref{eq:dist_proof1}, which holds by \eqref{eq:dist4h}.
				
By the Galerkin orthogonality \eqref{eq:orthoCaccio}, the  property \eqref{ass:loc}, and the fact that both $\chi^2 z_h$ and $w_h$ are supported on $U_{1/2}$,
\beq\label{eq:ineq3}
\abs{a_k(z_h,\chi^2 z_h )} =\abs{a_k(z_h,\chi^2 z_h - w_h)} \leq \Cloc\sum_{K \cap U_{1/2} \neq \emptyset} \norm{z_h}_{H^1_k(K)} \norm{\chi^2 z_h- w_h}_{H^1_k(K)}.
\eeq
Now, by \eqref{eq:saH1k}, the fact that $kd\leq 2\ccoer$ by \eqref{eq:ccoer}, \eqref{eq:iiH1k1}, and \eqref{eq:peterpaul1}, 
	\[\begin{split}
\norm{z_h}_{H^1_k(K)} \norm{\chi^2 z_h- w_h}_{H^1_k(K)} &\leq \Csuperk \norm{z_h}_{H^1_k(K)}\frac{h_K}{d}\left[\frac{\es{(1+ 2 \ccoer)}}{kd} \norm{z_h}_{L^2(K)} + \norm{\chi z_h}_{H^1_k(K)}\right] \\
		&\hspace{-2cm}\leq \frac{\Csuperk\Cinvk }{kd}\left[\frac{\es{(1+ 2 \ccoer)}}{kd}\norm{z_h}^2_{L^2(K)} + \norm{\chi z_h}_{H^1_k(K)}\norm{z_h}_{L^2(K)}\right]\\
		&\hspace{-2cm} \leq  \frac{C}{(kd)^2}(1 + \varepsilon_2^{-1})\norm{z_h}^2_{L^2(K)}+ \varepsilon_2\norm{\chi z_h}^2_{H^1_k(K)}
	\end{split}\]
for all $\varepsilon_2>0$. 
Combining this last inequality  with \eqref{eq:ineq3} and \eqref{eq:ineq1} and then using \eqref{eq:normsOnElements}, 
we obtain
	\[\begin{split}
		\norm{\chi z_h}^2_{\mathcal{Z}_k} \leq \,& \frac{C}{(kd)^2}(1 + \varepsilon_1^{-1} +  \varepsilon_2^{-1})\norm{z_h}^2_{\mathcal{H}(U_1)} + (\varepsilon_1 + \varepsilon_2)\norm{\chi z_h}^2_{\mathcal{Z}_k}.
	\end{split}\]
	Choosing $\eps_1=\eps_2 =1/4$, the last term on the right-hand side can be absorbed in the left-hand side, leading to \eqref{eq:sufficientC}, and hence the result \eqref{eq:Caccioppoli} follows.
\end{proof}

\

\es{To prove the Caccioppoli estimate with a negative norm on the right-hand side (Lemma \ref{lem:CaccioppoliNegative2}) we need the following lemma.}

\begin{lemma}
	\label{lem:abstractNegativeNorms}
	If $\Omega_0 \subset \Omega_1 \subset \Omega$ are arbitrary sets such that 
	\[\bigcup{\{K \in \mathcal{T} \textup{ s.t. } K \cap \Omega_0 \neq \emptyset\}}  \subset \Omega_1\,,\]
	then 
		\begin{equation}\label{eq:abstractNegativeNorms}
		\sum_{K \cap U_0 \neq \emptyset} \norm{u}^2_{H^{-j}_k(K)} \leq  \Zjdual{u}{j}{\Omega_1}^2 .
	\end{equation}
\end{lemma}
\begin{proof}
	The proof is very similar to that of \cite[Lemma 1.1]{ScWa:77}. Let $\varphi_K \in C^\infty_{\rm comp}(K)$ be such that $\norm{\varphi_K}_{H^j_k(K)} = 1$, let $\theta_K := \norm{u}_{{H}^{-j}_k(K)}$, and let
	\[\varphi := \sum_{K \cap \Omega_0 \neq \emptyset} \theta_K \varphi_K\,.\]
By linearity, $\varphi \in \mathcal{Z}_k^j$, $\supp\,\varphi \subset \Omega_1$, so that $\varphi \in \cZ^{j,<}_k(\Omega_1)$. 
By \eqref{eq:ZNormDisjoint} and \eqref{eq:controlHONorms},
	\[
	\norm{\varphi}_{\mathcal{Z}_k^j}^2 \leq\sum_{K \cap U_0 \neq \emptyset} \theta_K^2\,.
	\] 
	Hence,  
\[	\Zjdual{u}{j}{\Omega_1}^2 
	 \geq \frac{\abs{(u,\varphi)_{\mathcal{H}}}^2}{\norm{\varphi}^2_{\mathcal{Z}_k^j}} \geq \frac{\big|\sum_{K \cap \Omega_0 \neq \emptyset} \theta_K (u,\varphi_K)_{\mathcal{H}}\big|^2}{\sum_{K \cap \Omega_0 \neq \emptyset} \theta_K^2}
	 \es{
	 =\frac{\big|\sum_{K \cap \Omega_0 \neq \emptyset} \norm{u}_{{H}^{-j}_k(K)} (u,\varphi_K)_{\mathcal{H}}\big|^2}{\sum_{K \cap \Omega_0 \neq \emptyset} \norm{u}_{{H}^{-j}_k(K)}^2}
	 }
	 .\]
	Taking the supremum over $\varphi_K$ in the right hand side, we obtain the result. 
	\end{proof}

\begin{lemma}
	Let $B_i,i=1,\ldots,N$ be open sets and let $\es{\Cover}$ be as in \eqref{eq:defC}. Then
	\beq\label{eq:last_day2addition}
	\sum_{i = 1}^N \Wjdual{u}{j}{B_i}^2 \leq \es{\Cover} \Wjdual{u}{j}{\cup_{i=1}^N B_i}^2
	\eeq 
\end{lemma}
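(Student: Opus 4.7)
The plan is a duality argument that leverages the bounded overlap built into Assumption~\ref{ass:balls}. First, for each $i \in \{1,\ldots,N\}$ and any $\epsilon > 0$, I will choose a near-maximising test function $v_i \in \mathcal{W}_k^{j,<}(B_i)$ with $\|v_i\|_{\mathcal{W}_k^j} = 1$; after multiplying by a suitable unit-modulus phase I may assume that $\langle u, v_i\rangle = \theta_i \geq 0$ with $\theta_i \geq \Wjdual{u}{j}{B_i} - \epsilon$. I then set $v := \sum_{i=1}^N \theta_i v_i$. The support constraint on each $v_i$ (via the $\mathcal W$-analogue of \eqref{eq:<Space}) places $v$ in $\mathcal{W}_k^{j,<}(\cup_i B_i)$, since any point of $(\partial B_i\cap\partial\Omega)\setminus(\cup_j B_j)$ automatically lies in $\partial(\cup_j B_j)\cap\partial\Omega$; and by construction $\langle u, v\rangle = \sum_i \theta_i^2$.

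The definition of the dual norm on $\cup_i B_i$ then gives
\[
\sum_i \theta_i^2 \;=\; \langle u, v\rangle \;\leq\; \Wjdual{u}{j}{\cup_i B_i}\,\|v\|_{\mathcal{W}_k^j},
\]
so the whole proof reduces to the bound $\|v\|_{\mathcal{W}_k^j}^2 \leq C'\sum_i \theta_i^2$, for a constant $C'$ depending only on the overlap constant $C$. To establish this, I would restrict $v$ to any single set $B_l$: only those indices $i$ with $\overline{B_i}\cap B_l\neq\emptyset$ can contribute, and by \eqref{eq:defC} there are at most $C$ such indices. Applying the triangle inequality and Cauchy--Schwarz to this $C$-element sum gives
$\|v\|_{\mathcal{W}_k^j(B_l)}^2 \leq C \sum_{i}\theta_i^2 \,\|v_i\|_{\mathcal{W}_k^j(B_l)}^2$.
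Summing over $l$ and using the left inequality of \eqref{eq:lem_balls2} applied to each fixed $v_i$ (together with $\|v_i\|_{\mathcal{W}_k^j}=1$) yields
$\sum_l \|v\|_{\mathcal{W}_k^j(B_l)}^2 \leq C^2 \sum_i \theta_i^2$, and the right inequality of \eqref{eq:lem_balls2} applied to $v$ then controls $\|v\|_{\mathcal{W}_k^j(\cup_l B_l)}^2$ by the same quantity. Letting $\epsilon\to 0$ will give~\eqref{eq:last_day2addition} with a constant of order $C^2$, which is permitted by the statement.

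The main obstacle is justifying the final step, namely that $\|v\|_{\mathcal{W}_k^j}$ is comparable to $\|v\|_{\mathcal{W}_k^j(\cup_i B_i)}$. Since $\supp v \subset \overline{\cup_i B_i}$ this should follow from the framework of Section~\ref{sec:function}, but it is not immediate from the infimum-over-extensions definition~\eqref{eq:subset_norm}. In practice I expect this will be handled by combining the disjoint-support identity~\eqref{eq:ZNormDisjoint} with the locality already encoded in Assumption~\ref{ass:balls}, exactly as in the verification of the $\mathcal W$-scale for the concrete examples of Section~\ref{sec:examples}; the remaining parts of the argument (phase adjustment, density to handle the closure in the definition of $\mathcal W^{j,<}_k$, and the passage $\epsilon\to 0$) are routine.
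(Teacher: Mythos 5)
Your overall strategy is the same as the paper's: the paper also proves this by mimicking the proof of Lemma \ref{lem:abstractNegativeNorms}, taking (near-)extremal test functions $\varphi_i \in \cW_k^{j,<}(B_i)$ of unit norm, forming $\varphi = \sum_i \theta_i \varphi_i$, bounding $\norm{\varphi}^2_{\cW_k^j}$ by $C\sum_i\theta_i^2$ via the bounded overlap, and concluding by duality. Your handling of the support of $v$ (membership in $\cW_k^{j,<}(\cup_i B_i)$), the phase adjustment, and the $\epsilon$-limit are all fine.

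However, there is a genuine error in your key step. You claim that, when restricting $v=\sum_i\theta_i v_i$ to a single $B_l$, ``by \eqref{eq:defC} there are at most $C$ such indices'' $i$ with $\overline{B_i}\cap B_l\neq\emptyset$. This is false: the constant $C$ in \eqref{eq:defC} bounds the number of sets sharing a \emph{common point}, not the number of sets meeting a fixed $B_l$. (Take $B_l$ large and $N-1$ pairwise disjoint small balls each touching $B_l$: then $C=2$ but all $N-1$ indices contribute on $B_l$.) Consequently the Cauchy--Schwarz step $\norm{v}^2_{\cW_k^j(B_l)}\leq C\sum_i\theta_i^2\norm{v_i}^2_{\cW_k^j(B_l)}$ is unjustified and the chain collapses. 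The correct use of the overlap constant is \emph{pointwise}: for a.e.\ $x$ at most $C$ of the $v_i$ are nonzero at $x$, so $|\partial^\alpha v(x)|^2\leq C\sum_i\theta_i^2|\partial^\alpha v_i(x)|^2$, and integrating over $\Omega$ and summing over $|\alpha|\leq j$ gives $\norm{v}^2_{\cW_k^j}\leq C\sum_i\theta_i^2$ directly in the \emph{full} $\cW_k^j$ norm. This is what the paper's terse invocation of Assumption \ref{ass:balls} stands in for. This fix also dissolves the ``main obstacle'' you flag (comparing $\norm{v}_{\cW_k^j}$ with $\norm{v}_{\cW_k^j(\cup_i B_i)}$), since the bound is obtained for the full norm from the outset; and it yields the constant $C$ asserted in the statement, whereas your route, even if repaired, would only give $C^2$ (harmless for the downstream application, but not what the lemma claims).
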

\begin{proof}
	The proof is very similar to that of Lemma \ref{lem:abstractNegativeNorms}, 
	with the following modifications. The function $\varphi_i$ is now an arbitrary element of $\cW_k^{j,<}(B_i)$ with unit $\cW_k^j$ norm, and $\theta_i := \norm{u}_{\cW_k^j}$. We now let 
	\beqs
	\varphi:=\sum_{i} \theta_i \varphi_i,
\quad\text{ so that } \quad 
	\norm{\varphi}^2_{\cW_k^j} \leq \es{\Cover} \sum_{i=1}^N \theta_i^2
	\eeqs
	by Assumption \ref{ass:balls}, and the rest of the proof is unchanged.
\end{proof}
\begin{remark}\label{rem:Friday}
	It is clear from the proof of Lemma \ref{lem:abstractNegativeNorms} that the bound also holds when the $\Zjdual{\cdot}{j}{U}$ norm is defined with a supremum ranging over the smaller subset of functions supported in $U$, instead of all functions in $\mathcal{Z}_k^{j,<}(U)$.
As a consequence, Theorem \ref{thm:WDGS2} also holds with this changed definition of $\Zjdual{\cdot}{j}{U}$.
\end{remark}

\begin{lemma}[Caccioppoli estimate in negative norms]
	\label{lem:CaccioppoliNegative2}
	Given $C_{\rm qu} ,c> 0$, there exists a constant $C'_{\rm ca} > 0$ (whose value depends on the constants appearing in the assumptions of Section \ref{sec:assumptions}) such that the following holds.
	Let $V_h$ satisfy the assumptions of Section \ref{sec:assumptions}, and let the sets $U_0 \subset U_1 \subset \Omega$ 
	be as in \eqref{eq:Uballs} (so that $d=\partial_<(U_0,U_1)$) with 
	\beq\label{eq:dist5h}
d\geq 12(\ell+2)
\max\left\{ 1, 8\kappa\right\}\max_{K\cap U_1\neq\emptyset}h_K  ,\qquad r,d\geq ck^{-1}
\quad\tand\quad r+ d \leq \frac{\ccoer}{k},
\eeq
and
	\begin{equation}
		\frac{\max_{K \cap U_1 \neq \emptyset} h_K}{\min_{K \cap U_1 \neq \emptyset} h_K} \leq \Cqu\,.
		\label{ass:quLem}
	\end{equation}
If $z_h\in V_h$ satisfies~\eqref{eq:orthoCaccio}, then
	\[
	\norm{z_h}_{\mathcal{Z}_k(U_0)} \leq C_{\rm ca}'\left(
	 \frac{1}{kd}\right)^{\alpha_*} \Wjdual{z_h}{\newell+1}{U_1},
	\]
	where $\newell:= \min\{\ell,p-1\}$ and $\alpha_* =(\newell+2)(\newell+3)/2$.
\end{lemma}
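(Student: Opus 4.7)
The plan is to upgrade Lemma~\ref{lem:Caccioppoli} by iterating it together with a Nitsche-type duality argument based on the adjoint resolvent of Corollary~\ref{cor:resolvent}, peeling off one degree of negative regularity at each round. Concretely, with $N:=\newell+2$ I would nest $2N+1$ shells $U_0=V^{(0)}\subset V^{(1)}\subset\cdots\subset V^{(2N+1)}\subset U_1$ with $\partial_<(V^{(j-1)},V^{(j)})\gtrsim d':=d/(4N)$; the spacing hypothesis $d\geq 12(\ell+2)\max\{1,8\kappa\}\max h_K$ in~\eqref{eq:dist5h} then makes $d'$ still large enough (by $\max\{1,8\kappa\}\max h_K$) to apply Lemma~\ref{lem:Caccioppoli} on any consecutive pair, and the condition $r+d\leq \ccoer/k$ keeps every $V^{(j)}$ inside a ball on which the coercivity~\eqref{eq:coercivity1} and Corollary~\ref{cor:resolvent} are available.

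The induction hypothesis I would carry is
\[
(P_n)\colon\quad \N{z_h}_{\mathcal Z_k(V^{(0)})}\leq C_n (kd')^{-\gamma_n}\Wjdual{z_h}{n}{V^{(2n+1)}},\qquad \gamma_n:=\tfrac{(n+1)(n+2)}{2},
\]
for $n=0,1,\dots,\newell+1$. The base case $(P_0)$ is exactly Lemma~\ref{lem:Caccioppoli} applied to $(V^{(0)},V^{(1)})$ since $\Wjdual{\cdot}{0}{V^{(1)}}=\N{\cdot}_{\mathcal H(V^{(1)})}$. For the inductive step, the key tool is the Nitsche bound
\[
\Wjdual{z_h}{n}{V^{(2n+1)}}\leq C (hk)^{n+1}\N{z_h}_{\mathcal Z_k(V^{(2n+2)})},
\]
proved as follows: take $\phi\in \mathcal W_k^{n,<}(V^{(2n+1)})$ with $\N{\phi}_{\mathcal W_k^n}=1$, use a bounded-overlap partition of unity at scale $k^{-1}$ to write $\phi=\sum_i\phi_i$ with $\sum_i\N{\phi_i}^2_{\mathcal W_k^n}\leq C$ (Assumption~\ref{ass:balls} and~\eqref{eq:last_day2addition}), solve the adjoint problem on each ball to produce $w_i=\mathcal R^*\phi_i$ with $\N{w_i}_{\mathcal Z_k^{n+2}}\leq C\N{\phi_i}_{\mathcal W_k^n}$ by Corollary~\ref{cor:resolvent}, pick $\pi_h w_i\in V_h^<(V^{(2n+2)})\subset V_h^<(U_1)$ from Assumption~\ref{ass:ap} with $\N{w_i-\pi_h w_i}_{\mathcal Z_k}\leq C(hk)^{n+1}\N{w_i}_{\mathcal Z_k^{n+2}}$ (legitimate because $n\leq \newell=\min\{\ell,p-1\}$), apply the local Galerkin orthogonality~\eqref{eq:orthoCaccio} to get $\langle z_h,\phi_i\rangle=a_k(z_h,w_i-\pi_h w_i)$, and estimate via the localised continuity~\eqref{ass:loc}. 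Summing in $i$ by Cauchy--Schwarz together with Assumption~\ref{ass:balls} for $z_h$ closes the estimate.

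Combining $(P_n)$ with this Nitsche bound and then Lemma~\ref{lem:Caccioppoli} applied on the pair $(V^{(2n+2)},V^{(2n+3)})$ yields
\[
\N{z_h}_{\mathcal Z_k(V^{(0)})}\leq C_n(kd')^{-\gamma_n}(hk)^{n+1}(kd')^{-1}\N{z_h}_{\mathcal H(V^{(2n+3)})}.
\]
The spacing hypothesis $d'\gtrsim \max h_K$ gives $hk\leq C(kd')^{-1}$, hence $(hk)^{n+1}\leq C(kd')^{-(n+1)}$, so the prefactor becomes $(kd')^{-\gamma_n-(n+2)}=(kd')^{-\gamma_{n+1}}$. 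Finally, $\N{z_h}_{\mathcal H(V^{(2n+3)})}=\Wjdual{z_h}{0}{V^{(2n+3)}}$ is dominated (by one more Nitsche round at level $n+1$, which is the first step of the next iteration) by $\Wjdual{z_h}{n+1}{V^{(2n+3)}}$, yielding $(P_{n+1})$. Terminating at $n=\newell+1$ produces the claimed exponent $\alpha_*=\gamma_{\newell+1}=(\newell+2)(\newell+3)/2$, and absorbing the (bounded) factor $d/d'=4N$ into $C'_{\rm ca}$ gives the stated bound.

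The main obstacle is the bookkeeping of the shells $V^{(j)}$: at each round the sets chosen for Caccioppoli, for the super-approximation identity~\eqref{eq:saH1k} (used inside Lemma~\ref{lem:Caccioppoli}), and for the support of $\pi_h w_i$ must simultaneously satisfy the distance-to-mesh conditions~\eqref{eq:dist4h} and~\eqref{eq:dU0U1}, while remaining inside $U_1$; this is where the factor $12(\ell+2)$ in~\eqref{eq:dist5h} is needed, and where the local quasi-uniformity~\eqref{ass:quLem} must be invoked so that a single value of $hk$ controls every element touching each intermediate shell (hence also so that the sums $\sum_K\N{z_h}^2_{H^{-s}_k(K)}$ appearing via Lemma~\ref{lem:abstractNegativeNorms} can be absorbed into $\Wjdual{\cdot}{s}{\cdot}$ norms).
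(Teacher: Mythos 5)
Your overall strategy --- iterate a duality argument with the local adjoint resolvent of Corollary \ref{cor:resolvent} to trade one order of negative norm per round, seeded by the $L^2$ Caccioppoli estimate of Lemma \ref{lem:Caccioppoli} --- is the same as the paper's, and the shell bookkeeping and the source of the exponent $\alpha_*$ are essentially right. However, there are two genuine gaps in the inductive step.

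First, the identity $\langle z_h,\phi_i\rangle=a_k(z_h,w_i)$ with $w_i=\mathcal R^*\phi_i$ is not valid: by definition $a_k(v,\mathcal R^*\phi_i)=\overline{\langle \phi_i,v\rangle}$ only for $v\in\mathcal Z_k^<(U_1)$, and $z_h$ is a global finite element function, not an element of that space. One must first insert a localiser $\chi$ (so that $\chi z_h\in\mathcal Z_k^<(U_1)$ and $\langle z_h,\phi_i\rangle=\langle\chi z_h,\phi_i\rangle=a_k(\chi z_h,\mathcal R^*\phi_i)$), and this produces the commutator $a_k(\chi z_h,\mathcal R^*\phi_i)-a_k(z_h,\chi\mathcal R^*\phi_i)$, which your argument omits entirely. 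This term is not lower order: bounded via Assumption \ref{ass:commut}(iii) and Corollary \ref{cor:resolvent}, it contributes $C(kd)^{-(n+2)}\Wjdual{z_h}{n+1}{\cdot}\,\N{\phi_i}_{\mathcal W_k^n}$, i.e.\ exactly the size of the target right-hand side. The ``min'' structure of \eqref{eq:commute}, which lets one place the negative norm on $z_h$ over the annulus, exists precisely to close this step; without it the recursion does not gain a negative norm from the commutator.

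Second, the closing step of your induction is false as stated: $\N{z_h}_{\mathcal H}=\Wjdual{z_h}{0}{\cdot}$ is \emph{not} dominated by $\Wjdual{z_h}{n+1}{\cdot}$ --- the inequality between these norms goes the other way, since the dual of a smaller space with a stronger norm is weaker. The correct mechanism is to keep the factor $(hk)^{n+1}$ and spend it on the inverse inequality \eqref{eq:iiH1k2}, which converts $(h_Kk)^{n+1}\N{z_h}_{L^2(K)}$ into $\N{z_h}_{H^{-(n+1)}_k(K)}$ elementwise, and then sum via Lemma \ref{lem:abstractNegativeNorms} (this is where the local quasi-uniformity \eqref{ass:quLem} enters). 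You instead convert $(hk)^{n+1}$ into powers of $(kd')^{-1}$, after which there is no tool left to pass from $L^2$ to a negative norm. Relatedly, in the paper the per-step factor $(kd)^{-(n+2)}$ does not come from $hk\lesssim(kd)^{-1}$ at all; it comes from the localiser Leibniz estimate \eqref{eq:improvedLeibniz} applied to $\chi\mathcal R^*v$ in $\mathcal Z_k^{n+2}$ (and from the commutator bound), while $(hk)^{n+1}$ is reserved for the inverse inequality. Repairing your argument requires restoring both the cutoff/commutator and this division of labour, at which point it coincides with the paper's proof.
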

\begin{proof}
Let $\widetilde{U}_0:= B(x_0, r+d/2)\cap \Omega$. Let $\widetilde{d}:= \partial_< (\widetilde{U}_0,U_1)$ and note that $ \widetilde{d}=d/2$.
Later in the proof we apply Corollary \ref{cor:solution} with $U_0 \to \widetilde{U}_0$, $U_1\to U_1$, $d\to \widetilde{d}= d/2$ and $r\to r+ d/2$. Note that $r+d \to r+d$, so that the condition $r+d\leq \ccoer/k$ remains the same.

	By Lemma \ref{lem:Caccioppoli}, it suffices to show that for $j = 0,\ldots,s:=\min\{\ell,p-1\}$, 
	\begin{equation}
		\label{sufficient3}
		\Wjdual{z_h}{j}{U_j'}
		\leq C \left(
		 \frac{1}{k\widetilde{d}}\right)^{j+2}
\Wjdual{z_h}{j+1}{U_{j+1}'}
			\end{equation}
	where the sets
\beqs
U_j' := B\bigg(x_0, r+ \frac{j+1}{\ell+2}\widetilde{d}\bigg)\cap \Omega
\eeqs
so that 	
\[
U_0 \subset U'_0 \subset U'_1 \subset \ldots \subset U'_{\ell+1} = \widetilde{U}_0.
\]
\es{In proving \eqref{sufficient3}, we use five nested sets between $U'_j$ and $U'_{j+1}$; we therefore} 
let 
\beqs
U'_{j + \nu/6} := B \bigg(x_0, r+ \frac{j+1+\nu/6}{\ell+2}
\widetilde{d}\bigg)\cap \Omega, 	\quad\tfor\, \nu = 0,\ldots,5.
\eeqs	 
\es{Since $\partial_<(U'_{j + \nu/6},U'_{j+(\nu+1)/6})=(6 (\ell +2))^{-1}\widetilde{d}=(6 (\ell +2))^{-1}d/2$,}
the first condition in \eqref{eq:dist5h} implies that 
	\beq\label{eq:pierre1}
	\partial_<(U'_{j + \nu/6},U'_{j+(\nu+1)/6}) > \max\big\{8\kappa,1\big\} \max_{K\cap U_1 \neq \emptyset} h_K
	\quad\tfor\, \nu = 0,\ldots,5.
	 \eeq
	 	  We introduce the localiser $\chi = \psi(U'_{j+1/6},U'_{j+2/6})$. Let $v \in \mathcal{W}_k^{j,<}(U'_j)$ and note that $\chi v= v$. Let $\mathcal{R}^*$ be the solution operator on $U_1$ defined in Corollary \ref{cor:solution}. Then
	\[\abs{(z_h,v)_{\mathcal{H}}} = \abs{(\chi z_h,v)_{\mathcal{H}}} = \abs{a_k(\chi z_h, \mathcal{R}^*v)}\,.\]
	By the orthogonality \eqref{eq:orthoCaccio}, for all $w_h \in V_h$,
	\begin{equation}
		\label{eq:proof_commutators}
	a_k(\chi z_h,\mathcal{R}^*v) = a_k(z_h,\chi \mathcal{R}^*v - w_h) +
	\Big(a_k(\chi z_h,\mathcal{R}^*v) - a_k(z_h,\chi \mathcal{R}^*v)\Big).
	\end{equation}
By Assumption \ref{ass:ap}, \eqref{eq:pierre1}, and the fact that $\supp \chi \subset U'_{j+2/6}$, we can choose $w_h \in V_h^<(U'_{j+{3/6}})$ as an approximation of $\chi \mathcal{R}^* v$ satisfying \eqref{eq:ap}. Using (in this order) the locality of $a_k$ (\eqref{ass:loc} in Assumption \ref{ass:bn}), the Cauchy--Schwarz inequality,
the approximation property \eqref{eq:ap} (noting that, by the definition of $s$, $s+2\leq p+1$), \eqref{eq:improvedLeibniz} (with $j$ replaced by $j+2$),
and the elliptic regularity for $\mathcal{R}^*$ (Corollary \ref{cor:solution}),
we find
	\begin{align}
		\nonumber
		&\abs{a_k(z_h,\chi \mathcal{R}^*v - w_h)} \leq C\sum_{K \in \mathcal{T}} \norm{z_h}_{H^1_k(K)} \norm{\chi\mathcal{R}^* v - w_h}_{H^1_k(K)} \\
		\nonumber
		&\leq C \bigg(\sum_{K \cap U'_{j+3/6} \neq \emptyset} (h_Kk)^{2(j+1)} \norm{z_h}^2_{H^1_k(K)} \bigg)^{1/2}\bigg(\sum_{K \cap U'_{j+3/6} \neq \emptyset}  (h_Kk)^{-2(j+1)}\norm{\chi \mathcal{R}^* v - w_h}_{H^1_k(K)}^2\bigg)^{1/2}  \\ \nonumber
		&\leq  C(hk)^{j+1}\bigg(\sum_{K \cap U'_{j+
		3/6} \neq \emptyset} \norm{z_h}^2_{H^1_k(K)}\bigg)^{1/2} \norm{\chi \mathcal{R}^* v}_{\mathcal{Z}_k^{j+2}}\\
		& \leq C \left(
		\frac{1}{k\widetilde{d}}\right)^{j+2}(hk)^{j+1} \norm{z_h}_{\mathcal{Z}_k(U'_{j+4/6})} \norm{v}_{\mathcal{W}_k^{j}}\, \label{eq:temp}.
	\end{align} 
where $h := \max_{K \cap U_{j+3/6}' \neq \emptyset} h_K$ and we have used \eqref{eq:pierre1} in the last step.

We then use Lemma \ref{lem:Caccioppoli} to bound $\norm{z_h}_{\mathcal{Z}_k(U'_{j+4/6})}$ by $\norm{z_h}_{\mathcal{H}(U'_{j+5/6})}$. 
The condition \eqref{eq:dist4h} now becomes 
\beqs
\widetilde{d} \geq 6(\ell+2)\max\{1,8\kappa\}\max_{K\cap U_{5/6}'\neq \emptyset} h_K,
\eeqs
which is satisfied by the first condition in \eqref{eq:dist5h}. By the second condition in \eqref{eq:dist5h}, the condition \eqref{eq:ccoer}
 is satisfied ; i.e., the assumptions of Lemma \ref{lem:Caccioppoli} are satisfied.

We next use the 
\es{local quasi-uniformity assumption \eqref{ass:quLem}, the} inverse estimate \eqref{eq:iiH1k2} (noting that $s+1\leq p$) and  \eqref{eq:abstractNegativeNorms} to obtain 
	\begin{align}\nonumber
		(hk)^{2(j+1)} \norm{z_h}_{\mathcal{H}(U'_{j+5/6})}^2 \leq C\sum_{K \cap U'_{j+5/6} \neq \emptyset} (h_Kk)^{2(j+1)}\norm{z_h}_{L^2(K)}^2
		&\leq C\sum_{K \cap U'_{j+5/6} \neq \emptyset} \norm{z_h}^2_{H^{-(j+1)}_k(K)} \\
		& \leq C 
		\Zjdual{z_h}{j+1}{U'_{j+1}}^2,
		\label{eq:temp2}
	\end{align}
	where,
	\es{in using \eqref{eq:abstractNegativeNorms}, we have used that $\partial_<(U'_{j + 5/6},U'_{j+1}) > \max_{K\cap U_1 \neq \emptyset} h_K$ by \eqref{eq:pierre1}.} 
	Combining \eqref{eq:temp} and \eqref{eq:temp2}, we obtain the following bound on the first term on the right-hand side of \eqref{eq:proof_commutators}:
	\begin{align}\nonumber
	\abs{a_k(z_h,\chi \mathcal{R}^* v - w_h)} 
&\leq C\left(
	 \frac{1}{k\widetilde{d}}\right)^{j+2} \Zjdual{z_h}{j+1}{U'_{j+1}} \norm{v}_{\mathcal{W}_k^j},\\
&
\es{\leq C \left(
	 \frac{1}{k\widetilde{d}}\right)^{j+2} \Wjdual{z_h}{j+1}{U'_{j+1}} \norm{v}_{\mathcal{W}_k^j},}
	\label{eq:Friday_combine1}
	\end{align}
	\es{where we have used \eqref{eq:final1} in the last step.}
	
To bound the second term on the right-hand side of in~\eqref{eq:proof_commutators}, we use 
\eqref{eq:commute} (with $j$ replaced by $j+2$), 
	 and the mapping properties of $\mathcal{R}^*$ from Corollary \ref{cor:solution} to find that 
	\begin{align}\nonumber
	&\Big|a_k(\chi z_h,\mathcal{R}^*v) - a_k(z_h,\chi \mathcal{R}^*v)\Big|\\ \nonumber
	&\qquad\leq \frac{\Ccom}{k\widetilde{d}}\left(\sum_{r = 0}^{j+1} (k\widetilde{d})^{-r}\right) 
	\Wjdual{z_h}{j+1}{U'_{j+2/6}\setminus \overline{U_{j+1/6}'}} \norm{\mathcal{R}^* v}_{\mathcal{Z}_k^{j+2}(U'_{j+2/6}\setminus \overline{U_{j+1/6}'})}\\ 
		&\qquad\leq \frac{C}{k\widetilde{d}} \bigg(
		 \frac{1}{k\widetilde{d}}\bigg)^{j+1} \Wjdual{z_h}{j+1}{U'_{j+1}} \norm{v}_{\mathcal{W}_k^{j}}.\label{eq:Friday_combine2}
	\end{align}
Combining \eqref{eq:proof_commutators}, \eqref{eq:Friday_combine1}, and \eqref{eq:Friday_combine2}, 
we obtain that 
\[
\abs{(z_h,v)_{\mathcal{H}}} \leq C \left(
 \frac{1}{k\widetilde{d}}\right)^{j+2} \Wjdual{z_h}{j+1}{U_{j+1}'}\norm{v}_{\mathcal{W}_k^j}
\]
	for all $v \in \mathcal{W}_k^{j,<}(U'_{j})$, which implies the result \eqref{sufficient3}.
\end{proof}

\section{Proofs of Theorems \ref{thm:WDGS1} and \ref{thm:WDGS2}}\label{sec:proofs}

\ble[Analogue of Theorem \ref{thm:WDGS1} for small balls close together]
\label{lem:WDGS}
	Given positive constants $\Ccont$, $\Ccoer$, $\ccoer$, $\Ckappa$, $\Cp$, $\Cinvk$, $\Cpw$, $\Csuperk$, $\Ccom> 0$, there exists a constant $C_\star > 0$ such that the following holds. Let $(a_k)_{k \geq k_0}$ and $V_h \subset \mathcal{Z}_k$ satisfy Assumptions \ref{ass:cont_coer}, \ref{ass:commut}, \ref{ass:ap}, \ref{ass:sa}, \ref{ass:ii}, and \ref{ass:loc}, with the constants above. 
For $x_0\in \Omega$, let 
	\beq\label{eq:choice2}
\Omega_0= B(x_0,d/4)\cap \Omega \quad\tand\quad\Omega_1=B(x_0,3d/4)\cap \Omega
\eeq
with 	
	\begin{equation}\label{eq:lemWDGS0}
d\leq \frac{\es{4}}{3}\frac{\ccoer}{k}
		\quad \tand \quad 
		d\geq 4\max\left\{1,8\kappa\right\} \max_{K \cap \Omega_1 \neq \emptyset} h_K.
	\end{equation}
If $k \geq k_0$, $u \in \mathcal{Z}_k$, and $u_h \in V_h$ are such that 
	\beq\label{eq:GOG1_small}
	a_k(u-u_h,v_h) = 0 \quad \tfa v_h \in V_h^<(\Omega_1),
	\eeq
	then
		\begin{align}
		&\norm{u - u_h}_{\mathcal{Z}_k(\Omega_0)} \leq
		\frac{C}{kd}\bigg(\norm{u}_{\mathcal{Z}_k(\Omega_1)} + \frac{1}{kd} \norm{u}_{\mathcal{H}(\Omega_1)}
	+ \norm{u - u_h}_{\mathcal{H}(\Omega_1)}\bigg).\label{eq:sufficient}
	\end{align}
\ele

\bpf[Proof of Theorem \ref{thm:WDGS1} using Lemma \ref{lem:WDGS}]
We first show using a covering argument  that Lemma \ref{lem:WDGS} implies that the bound \eqref{eq:sufficient} holds for general sets $
\Omega_0\subset\Omega_1\subset\Omega$ satisfying the assumptions of Theorem \ref{thm:WDGS1}, i.e.  
\eqref{eq:WDGS1:assd}.
First, we find a subset $\Omega'_1 \subset \Omega_1$ such that 
\beq
d'=\partial_<(\Omega_0,\Omega'_1)
= \min\bigg\{ \frac{C_0}{2}, \frac{\es{4}\ccoer}{3}\bigg\} \frac{1}{k}.
\label{eq:last_day1}
\eeq
This definition implies that 
$d' \leq \es{4}\ccoer/(3k)$, and also that 
$d' \leq d\es{/2}$ (by the first condition in \eqref{eq:WDGS1:assd}), so that $\Omega'_1$ is indeed a subset of $\Omega_1$. Observe that \eqref{eq:last_day1} and the second condition in \eqref{eq:WDGS1:assd} imply that 
\beqs
d'\geq 4\max\left\{1,8\kappa\right\} \max_{K \cap \Omega_1 \neq \emptyset} h_K;
\eeqs
i.e., the inequalities in \eqref{eq:lemWDGS0} are satisfied with $d$ replaced by $d'$.

Next, we introduce $x_1,\ldots,x_N \in \Omega_0$ such that 
\beq\label{eq:inclusions1}
\Omega_0 \subset \bigcup_{j=1}^N \Big(B(x_j,d'/4) \cap \Omega\Big) \subset \bigcup_{j=1}^N \Big(B(x_j,3d'/4)\cap \Omega\Big) \subset \Omega'_1,
\eeq
and such that the intersection between $m$ distinct balls is empty when $m \geq C$, for some constant $C$ depending only on the space dimension $n$. Note that the intersections with $\Omega$ are needed when $\Omega_0$ is near the boundary of $\Omega$.  

We now apply Lemma \ref{lem:WDGS} with $d$ replaced by $d'$, and thus 
\beqs
\Omega_0= B(x_j,d'/4) \cap \Omega, \quad\tand\quad 
\Omega_1= B(x_j,d'/4) \cap \Omega.
\eeqs
Note that the orthogonality assumption \eqref{eq:GOG1} on the large domain $\Omega_1$ implies the analogous orthogonality  \eqref{eq:GOG1_small} on each ball. Therefore,
\begin{align*}
	&\norm{u - u_h}_{\mathcal{Z}_k(B(x_j,d'/4))} \\
	&\qquad\leq
	\frac{C}{kd'}\bigg(\norm{u}_{\mathcal{Z}_k(B(x_j,3d'/4))} + \frac{1}{kd'} \norm{u}_{\mathcal{H}(B(x_j,3d'/4))}
	+ \norm{u - u_h}_{\mathcal{H}(B(x_j,3d'/4))}\bigg).
\end{align*}
Summing with respect to $j$ and using \eqref{eq:lem_balls1}-\eqref{eq:lem_balls2},
\begin{align}
	\norm{u-u_h}_{\cZ_k(\Omega_0)} \leq \frac{C}{kd'}\bigg(\norm{u}_{\mathcal{Z}_k(\Omega'_1)} + \frac{1}{kd'} \norm{u}_{\mathcal{H}(\Omega'_1)}
	+ \norm{u - u_h}_{\mathcal{H}(\Omega'_1)}\bigg) 
	\label{eq:last_day2}
\end{align}
By \eqref{eq:last_day1}, the instances of $(kd')^{-1}$ on the right hand side of \eqref{eq:last_day2} are bounded by a constant; then, by \eqref{eq:subset_norm}, 
	\begin{align}
		&\norm{u - u_h}_{\mathcal{Z}_k(\Omega_0)} \leq
		C \Big( \norm{u}_{\mathcal{Z}_k(\Omega_1)} 
	+ \norm{u - u_h}_{\mathcal{H}(\Omega_1)}\Big).\label{eq:sufficient0}
	\end{align}
	
	To obtain \eqref{eq:WDGS1} from \eqref{eq:sufficient0}, we observe that if $u\in \cZ_k$ and $u_h\in V_h$ satisfy the assumptions of the theorem, then so do $\widetilde{u}:= u-w_h \in\cZ_k$ and $\widetilde{u}_h:= u_h -w_h \in V_h$, where $w_h \in V$ is arbitrary. Indeed, the key point is that $u$ and $u_h$ enter the assumptions of the theorem only via $u-u_h$, and $u-u_h = \widetilde{u}-\widetilde{u}_h$.
		Therefore, in \eqref{eq:sufficient0}, the norms of $u$ on the right-hand side can be replaced by the norms of $u-w_h$ for arbitrary $w_h\in V_h$, and this gives \eqref{eq:WDGS1}.
\epf

\

\bpf[Proof of Lemma \ref{lem:WDGS}]
Let 
\beqs
\Omega_{1/2}:= B(x_0,d/2)\cap \Omega
\eeqs
and let $\chi = \psi(\Omega_{1/2},\Omega_1)$ be the localiser associated to $\Omega_{1/2},\Omega_1$ via Assumption \ref{ass:commut} (with $(r,d)= (d/2,d/4)$).

	Let the operator $\Pi_h: \mathcal{Z}_k \to V_h^{<}(\Omega_1)$ be defined as the solution of the variational problem
	\beq\label{eq:proj1}
a_k(\Pi_h \zeta,v_h) = a_k(\zeta,v_h)\quad\tfa v_h \in V_h^<(\Omega_1).
	\eeq
Since $3d/4 < \ccoer k^{-1}$ (by \eqref{eq:lemWDGS0}),
$a_k$ is continuous and coercive on $V_h^<(\Omega_1)$ by \eqref{eq:contAb} and \eqref{eq:coercivity1}, and thus $\Pi_h$ is well-defined by the Lax-Milgram lemma.

	By the definition of $\chi$ and the triangle inequality, 
\begin{align}\nonumber
\norm{u - u_h}_{\mathcal{Z}_k(\Omega_0)} = \norm{\chi u - u_h}_{\mathcal{Z}_k(\Omega_0)} &\leq \norm{\chi u - \Pi_h(\chi u)}_{\mathcal{Z}_k(\Omega_0)} + \norm{\Pi_h(\chi u) - u_h}_{\mathcal{Z}_k(\Omega_0)} \\
& \leq \norm{(\Id - \Pi_h)(\chi u)}_{\mathcal{Z}_k}+ \norm{z_h}_{\mathcal{Z}_k(\Omega_0)},\label{eq:ineq6}
	\end{align}
	where $z_h := \Pi_h(\chi u) - u_h $. 
	To bound the first term on the right-hand side of \eqref{eq:ineq6}, we use C\'ea's lemma, which follows from the continuity and coercivity of $a_k$ on $V_h^<(\Omega_1)$, to obtain
		\begin{align}
			\norm{(\Id - \Pi_h)(\chi u)}_{\mathcal{Z}_k} \leq C\inf_{w_h \in V_h^<(\Omega_1)} \norm{\chi u - w_h}_{\mathcal{Z}_k}
			& \leq  C \norm{\chi u}_{\mathcal{Z}_k} \leq  C\Big(\norm{u}_{\mathcal{Z}_k(\Omega_1)} + \frac{1}{kd} \norm{u}_{\mathcal{H}(\Omega_1)}\Big),\label{eq:ineq4}
		\end{align}
where we have used \eqref{eq:improvedLeibniz} in the last inequality.

We now bound $\norm{z_h}_{\mathcal{Z}_k(\Omega_0)}$ in \eqref{eq:ineq6} using the Caccioppoli inequality \eqref{eq:Caccioppoli} applied with $U_0=\Omega_0$ and $U_1=\Omega_{1/2}$. The distance between these two sets is $d/4$, and so the condition \eqref{eq:dist4h} is ensured by the second  condition in \eqref{eq:lemWDGS0} since $K\cap \Omega_{1/2}\subset K\cap \Omega_1$.  
The condition \eqref{eq:ccoer} becomes that $d/4 +d/8 \leq \ccoer/ k$, and is thus ensured by the first condition in \eqref{eq:lemWDGS0}. Then the definition $z_h:= \Pi_h(\chi u) - u_h$, the definition of $\Pi_h$ \eqref{eq:proj1},
the fact that $\chi\equiv 1$ on $\Omega_{1/2}$, 
\es{the locality of $a_k$ (\eqref{ass:loc} in Assumption \ref{ass:bn})},
 and the orthogonality \eqref{eq:GOG1_small} imply that
\beqs
a(z_h,v_h)=a\big(\Pi_h(\chi u)-u_h, v_h\big)=a(\chi u-u_h, v_h)= 0 \quad\tfa v_h \in V_h^<(\Omega_{1/2}),
\eeqs
i.e., \eqref{eq:orthoCaccio} holds (with $U_1=\Omega_{1/2}$). Therefore, by \eqref{eq:Caccioppoli}, 
		\beq\label{eq:zh1}
		\norm{z_h}_{\mathcal{Z}_k(\Omega_0)} \leq \frac{C}{kd} \norm{z_h}_{\mathcal{H}(\Omega_{1/2})}.
		\eeq
Using (in this order) the definition of $z_h$, the triangle inequality, 
the fact that the $\mathcal{Z}_k$ norm is stronger than the $\mathcal{H}$ norm, and \eqref{eq:ineq4}, we find that 
		\begin{align}\nonumber
			\norm{z_h}_{\mathcal{H}(\Omega_{1/2})} \leq \norm{u - u_h}_{\mathcal{H}(\Omega_{1/2})} + \norm{u - \Pi_h(\chi u)}_{\mathcal{H}(\Omega_{1/2})}
			&\leq  C\Big(\norm{u - u_h}_{\mathcal{H}(\Omega_1)} +
			\norm{(\Id - \Pi_h)(\chi u)}_{\mathcal{Z}_k}\Big)\\ 
			&\hspace{-1cm} \leq C \Big(\norm{u - u_h}_{\mathcal{H}(\Omega_1)} + \norm{u}_{\mathcal{Z}_k(\Omega_1)} + \frac{1}{kd}\norm{u}_{\mathcal{H}(\Omega_1)}\Big). \label{eq:zh2}
		\end{align}
		
The bound \eqref{eq:sufficient} then follows from combining \eqref{eq:ineq6}, \eqref{eq:ineq4}, \eqref{eq:zh1}, and \eqref{eq:zh2}.		
\epf

\ble[Analogue of Theorem \ref{thm:WDGS2} for small balls close together]
\label{lem:WDGS2}
	Given positive constants $\Ccont$, $\Ccoer$, $\ccoer$, $\Ckappa$, $\Cp$, $\Cinv$, $\Cpw$, $\Csuper$, $\Ccom, \Cqu> 0$, there exists a constant $C_\star > 0$ such that the following holds. Let $(a_k)_{k \geq k_0}$ and $V_h \subset \mathcal{Z}_k$ satisfy Assumptions \ref{ass:cont_coer}, \ref{ass:er}, \ref{ass:commut}, \ref{ass:ap}, \ref{ass:sa}, \ref{ass:ii}, and \ref{ass:loc}, with the constants above. 
For $x_0\in \Omega$, let $\Omega_0$ and $\Omega_1$ be as in \eqref{eq:choice2}
with 	
	\begin{equation}\label{eq:lemWDGS2}
d\leq \frac{2\ccoer}{k}
		\quad \tand \quad 
		d\geq 48 (\ell+2)\max\left\{1,8\kappa\right\} \max_{K \cap \Omega_1 \neq \emptyset} h_K.
	\end{equation}
Assume further that
\beqs
		\frac{\max_{K \cap \Omega_1 \neq \emptyset} h_K}{\min_{K \cap \Omega_1 \neq \emptyset} h_K} \leq \Cqu.
\eeqs
If $k \geq k_0$, $u \in \mathcal{Z}_k$, and $u_h \in V_h$ are such that \eqref{eq:GOG1_small} holds,
	then
		\begin{align}
		&\norm{u - u_h}_{\mathcal{Z}_k(\Omega_0)} \leq
		 \frac{C}{kd}\bigg(\norm{u}_{\mathcal{Z}_k(\Omega_1)} + \frac{1}{kd} \norm{u}_{\mathcal{H}(\Omega_1)}
	+ \Wjdual{u-u_h}{\newell+1}{\Omega_1}\bigg),
	\end{align}
where $\newell:= \min\{\ell,p-1\}$
\ele

\bpf[Proof of Theorem \ref{thm:WDGS2} using Lemma \ref{lem:WDGS2}]
This exactly parallels the proof of Theorem \ref{thm:WDGS1} using Lemma \ref{lem:WDGS}, with the first step choosing
$\Omega'_1 \subset \Omega_1$ such that 
\beqs
d'=\partial_<(\Omega_0,\Omega'_1)
= \min\bigg\{ \frac{C_0}{2}, {2\ccoer}\bigg\} \frac{1}{k}.
\eeqs
\epf

\bpf[Proof of Lemma \ref{lem:WDGS2}]
This exactly parallels the proof of Lemma \ref{lem:WDGS}, except that now (i)  we use Lemma \ref{lem:CaccioppoliNegative2} instead of 
Lemma \ref{lem:Caccioppoli}, and (ii) when obtaining the analogue of \eqref{eq:last_day2} we use additionally \eqref{eq:last_day2addition}.
\epf

\section{Examples of Helmholtz problems fitting in the abstract framework}\label{sec:examples}

\paragraph{Summary.}
In \S\ref{sec:scattering}-\ref{sec:verifySesqui}, we show that the general framework in which Theorems \ref{thm:WDGS1} and \ref{thm:WDGS2} hold includes 
\bit
\item truncation of the unbounded exterior domain by \emph{either} a PML, \emph{or} an impedance boundary condition, \emph{or} the exact Dirichlet-to-Neumann map for the exterior of a ball,
\item scattering by Dirichlet or Neumann impenetrable obstacles, and
\item scattering by penetrable obstacles.
\eit
In \S\ref{sec:verifySubspace} we show that the assumptions on the finite-dimension subspace $V_h$ are satisfied for shape-regular Lagrange finite elements.

\subsection{Definitions of the sesquilinear forms $a(\cdot,\cdot)$ and spaces $\cZ_k$} \label{sec:scattering}

\subsubsection{The geometry and coefficients for scattering by a combination of an impenetrable Dirichlet or Neumann obstacle and a penetrable obstacle}

Let $\Ot,\OI\subset B_{R_0}:= \{ x : |x| < R_0\}\subset \Rea^d$, $d=2,3$, be bounded open sets with Lipschitz boundaries, $\Gt$ and $\GI$, respectively, such that $\Gt\cap \GI=\emptyset$ and $\Omega_+:=\Rea^d\setminus\overline{\OI}$ is connected. Let 
$\Omegaout:= \Omega_+ \setminus \overline{\Ot}$ and $\Omegain:= \Omega_+\cap \Ot$.

\es{The obstacle $\Ot$ is the penetrable obstacle, across whose boundary we impose transmission conditions, and $\Omega_-$ is the impenetrable obstacle, on which we impose either a zero Dirichlet or a zero Neumann condition. 
The condition $\Gt\cap \GI=\emptyset$ allows two configurations:~the first, illustrated in Figure \ref{fig:two_cases}(a), is when the penetrable and impenetrable obstacles are disjoint. The second, illustrated in Figure \ref{fig:two_cases}(b), is when the impenetrable obstacle is inside the penetrable obstacle.}

For simplicity, we do not cover the case when 
$\Omega_-$ is disconnected,  with Dirichlet boundary conditions on some connected components and Neumann boundary conditions on others, but the main results hold for this problem too (at the cost of introducing more notation).

\begin{figure}[h!]
    \centering
    \begin{subfigure}{0.5\textwidth}
        \centering
        \scalebox{0.8}{
\begin{tikzpicture}

\def\radiusA{2cm}
\def\radiusB{1.5cm}
\def\radiusC{1cm}

\coordinate (O1) at (0,0);
\coordinate (O2) at (5cm,0);

\draw[name path=circleA] (O1)node{$\Omega_-$} circle (\radiusA);

\filldraw[name path=circleB,lightgray] (O2)node{$\Ot=\Omega_{\rm in}$} circle (\radiusB);
\filldraw[name path=circleB,pattern=north east lines] (O2)node{$\Ot=\Omega_{\rm in}$} circle (\radiusB);




\end{tikzpicture}
}
\caption{}
    \end{subfigure}%
    ~ 
    \begin{subfigure}{0.5\textwidth}
        \centering

        \scalebox{0.8}{
\hspace{1cm}
\begin{tikzpicture}

\pgfdeclarepatternformonly{wide lines}{\pgfpoint{-1cm}{-1cm}}{\pgfpoint{1cm}{1cm}}{\pgfpoint{1cm}{1cm}}
{
    \pgfsetdash{{3pt}{3pt}}{0pt} 
    \pgfsetlinewidth{0.8pt}
    \pgfpathmoveto{\pgfpoint{-0.5cm}{-0.5cm}}
    \pgfpathlineto{\pgfpoint{0.5cm}{0.5cm}}
    \pgfusepath{stroke}
}

\def\radiusA{2cm}
\def\radiusB{1cm}

\coordinate (O1) at (0,0);
\coordinate (O2) at (0,0);

\filldraw[name path=circleA, lightgray, opacity=1] (O1)node{$\Omega_-$} circle (\radiusA);
\filldraw[name path=circleB,white] (O2) circle (\radiusB);
\filldraw[name path=circleA, pattern = north east lines] (O1) circle (\radiusA);

\filldraw[name path=circleB,white, opacity=.5] (O2) circle (\radiusB);

\filldraw[name path=circleA, pattern = north east lines] (\radiusA+30,\radiusA+10) rectangle (\radiusA+10,\radiusA-10);
\draw  (\radiusA +30,\radiusA) node[right]{$\Ot$};

\draw (O2) node{$\Omega_{-}$};

\draw (O1)++(90:1.5cm) node{$\Omega_{\rm in}$};




\end{tikzpicture}
}

        \caption{}
    \end{subfigure}
    \caption{\es{The two configurations of $\Omega_-$ and $\Ot$ considered. In both cases, $\Ot$ is hatched, and $\Omegain$ is uniformly shaded.}}\label{fig:two_cases}
\end{figure}

Let  $\Ascatout \in C^{0,1} (\Omegaout , \Rea^{d\times d})$ 
and $\Ascatin \in C^{0,1}(\Omegain, \Rea^{d\times d})$ 
be symmetric positive definite, let $\cscatout \in L^\infty(\Omegaout;\Rea)$, $\cscatin \in L^\infty(\Omegain;\Rea)$ 
be strictly positive, and let $\Ascatout$ and $\cscatout$ be such that 
 there exists $R_{\rm scat}>R_0>0$ such that 
\beqs
\overline{\Omega_-} \cup {\rm supp}(I- \Ascatout) \cup {\rm supp}(1-\cscatout) \Subset B_{R_{\rm scat}}.
\eeqs
Let 
\beqs
\Ascat := 
\begin{cases}
\Ascatin 
\hspace{-1ex}
& \tin \Omegain,\\
\Ascatout 
\hspace{-1ex}
& \tin \Omegaout, 
\end{cases}
\quad\tand\quad
\frac{1}{\cscat^2} := 
\begin{cases}
\cscatin^{-2} 
\hspace{-1ex}
& \tin \Omegain,\\
\cscatout^{-2} 
\hspace{-1ex}
& \tin \Omegaout 
\end{cases}.
\eeqs

\subsubsection{The scattering problem}
Given $g\in L^2(\Omega_+)$ \es{(where recall that $\Omega_+ := \Rea^d \setminus \overline{\Omega_-}$)}
with $\supp \, g\subset B_{\Rscat}$ and $k,\gamma>0$, let $u=(\uin,\uout)$ be the solution of 
\begin{subequations}\label{eq:olddognewtricks}
\begin{align}
 k^{-2}\cscatout^{2}\nabla\cdot (\Ascatout\nabla \uout) + \uout &=- g \quad\tin \Omegaout,\\
 k^{-2}\cscatin^{2}\nabla\cdot (\Ascatin\nabla \uin) + \uin &=- g \quad\tin \Omegain,\\
&\hspace{-4cm}\uin= \uout \quad\tand\quad\partial_{n, \Ascatin} \uin = \gamma \partial_{n,\Ascatout} \uout \quad\ton \Gt, \label{eq:transmission}\\
&\hspace{-3.5cm}\text{ either } \quad\uin = 0 \quad\text{ or }\quad \partial_{n,\Ascatin}\uin=0 \quad\ton \GI \quad \text{ if } \Omega_- \subset \Omegain, \text{ or} ,\\
&\hspace{-3.5cm}\text{ either } \quad\uout = 0 \quad\text{ or }\quad \partial_{n,\Ascatout}\uout=0 \quad\ton \GI \quad \text{ if } \Omega_- \subset \Omegaout, 
\end{align}
\end{subequations}
where 
$\uin \in H^1(\Omegain)$,  $\uout \in H^1(\Omegaout\cap B_R)$ for every $R>0$, and $\uout$ satisfies the Sommerfeld radiation condition 
 \beq\label{eq:src}
k^{-1} \pdiff{\uout}{r}(x) - \ri  \uout(x) = o \Big(\frac{1}{r^{(d-1)/2}}\Big)
 \eeq
as $r:= |x|\tendi$ (uniformly in $\widehat{x}:= x/r$).
The solution of this problem exists and is unique; see, e.g., \cite{GrPeSp:19} and the references therein.

\subsubsection{The variational formulation}

Given $R> \Rscat$, let 
\beqs
\Omega:= \Omegain\cup (\Omegaout\cap B_R)
\eeqs
 and let
\beq\label{eq:Z1}
\cZ_k:= \Big\{ v \in H^1(\Omega)
\,:\, v=0 \text{ on } \Gamma_-\Big\} \quad \text{ or } \quad H^1(\Omega),
\eeq
in both cases equipped with the $H^1_k$ norm defined by \eqref{eq:defWeightedNorms}, 
with the former space corresponding to zero Dirichlet boundary conditions on $\Gamma_-$ and the latter corresponding to zero Neumann boundary conditions on $\Gamma_-$.

Let 
$\DtN: H^{1/2}(\GR)\to H^{-1/2}(\GR)$ be the Dirichlet-to-Neumann map, $u\mapsto k^{-1} \partial_r u$, for the Helmholtz equation $(k^{-2}\Delta +1)u=0$ posed in the exterior of $B_R$ and satisfying the Sommerfeld radiation condition \eqref{eq:src}; i.e., when $d=2$, 
given $g\in H^{1/2}(\partial B_R)$, 
\beq\label{eq:DtN}
\DtN g (\varphi):= \frac{1}{2\pi} \sum_{n=-\infty}^\infty \frac{H^{(1)'}_n(kR)}{ H^{(1)}_n(kR)} \exp(\ri n \varphi)\int_0^{2\pi} \exp(-\ri n\theta) g(R, \theta)\, \rd \theta;
\eeq
for the analogous expression when $d=3$, see, e.g., 
e.g., \cite[Equation 3.6]{ChMo:08}, 
\cite[Equation 3.7]{MeSa:10}. 

The variational formulation of the scattering problem \eqref{eq:olddognewtricks}-\eqref{eq:src} is 
\beq\label{eq:HelmholtzVF}
\text{ find } u \in \cZ_k \,\tst\, a(u,v) = G(v) \,\tfa v \in \cZ_k,
\eeq
where 
\beq\label{eq:sesqui1}
a(u,v) := 
\left(\int_{\Omegaout\cap B_R}  + \frac{1}{\gamma}\int_{\Omegain} \right)
\Big(k^{-2}(\Ascat\nabla u) \cdot\overline{\nabla v} - \cscat^{-2} u \overline{v}\Big)
- k^{-1}\big\langle \DtN u, v\big\rangle_{\partial B_R}
\eeq
and
\beq\label{eq:HelmholtzG}
G(v):= 
\left(\int_{\Omegaout\cap B_R}  + \frac{1}{\gamma}\int_{\Omegain} \right)
c^{-2}g \overline{v}.
\eeq

\subsubsection{Approximation of $\DtN$ by an impedance boundary condition}

A commonly-used approximation of $\DtN$ is to impose that $k^{-1}\partial_r u = \ri u$ on $\partial B_R$, i.e., impose an impedance boundary condition. In this case, one also often removes the requirement that the outer boundary is a ball. 
Let $R_\tr> \Rscat$, let $\Omega_{\tr}\subset \mathbb{R}^d$ be a bounded Lipschitz open set with $B_{\Rtr }\subset \Omega_{\tr} \subset  B_{C\Rtr }$ for some $C>0$ (i.e., $\Omega_{\tr}$ has characteristic length scale $\Rtr $), and let $\Gamma_{\tr}:=\partial\Omega_{\tr}$. 
The impedance problem is \eqref{eq:HelmholtzVF} with now 
\eqref{eq:sesqui1} replaced by 
\beq\label{eq:sesqui2}
a(u,v) := 
\left(\int_{\Omegaout\cap \Omega_{\rm tr}}  + \frac{1}{\gamma}\int_{\Omegain} \right)
\Big(k^{-2}(\Ascat\nabla u) \cdot\overline{\nabla v} - \cscat^{-2} u \overline{v}\Big)
- \ri k^{-1}\langle u, v\rangle_{\Gamma_{\tr}},
\eeq
\eqref{eq:HelmholtzG} replaced by the analogous expression with integration over $\Omegaout\cap B_R$ replaced by integration over $\Omegaout\cap \Omega_{\rm tr}$, and $\cZ_k$ still defined by  \eqref{eq:Z1}, but now with 
\beq\label{eq:OmegaImpPML}
\Omega= \Omegain\cup(\Omegaout\cap \Omega_\tr).
\eeq
See \cite{GLS1} for $k$-explicit bounds on the error incurred by this approximation (showing, in particular, how the error depends on $\partial \Omega_{\rm tr}$).

\subsubsection{Approximation of $\DtN$ by a radial PML}\label{sec:PML}

Let $\Rtr >\RPMLo>R_{\rm scat}$, let $\Omega_{\rm tr}$ and $\Gamma_{\tr}:=\partial\Omega_{\rm tr}$ be as above, and let 
 $\Omega$ be defined by \eqref{eq:OmegaImpPML}.
For $0\leq \theta<\pi/2$, let the PML scaling function $f_\theta\in C^{1}([0,\infty);\mathbb{R})$ be defined by $f_\theta(r):=f(r)\tan\theta$ for some $f$ satisfying
\begin{equation}
\label{e:fProp}
\begin{gathered}
\big\{f(r)=0\big\}=\big\{f'(r)=0\big\}=\big\{r\leq \RPMLo\big\},\quad f'(r)\geq 0,\quad f(r)\equiv r \text{ on }r\geq \RPMLt;
\end{gathered}
\end{equation}
i.e., the scaling ``turns on'' at $r=\RPMLo$, and is linear when $r\geq \RPMLt$. We emphasize that $\Rtr $ can be $<\RPMLt$, i.e., we allow truncation before linear scaling is reached. Indeed,
$\RPMLt>\RPMLo$ can be arbitrarily large and therefore, given any bounded interval $[0,R]$ and any function $\widetilde{f}\in C^{1}([0,R])$ satisfying 
\beqs
\begin{gathered}
\big\{\widetilde{f}(r)=0\big\}=\big\{\widetilde{f}'(r)=0\big\}=\big\{r\leq \RPMLo\big\},\qquad \widetilde{f}'(r)\geq 0,
\end{gathered}
\eeqs
we can choose an $f$ with $f|_{[0,R]}=\widetilde{f}$. 
Given $f_\theta(r)$, let 
\beq\label{eq:alpha_beta}
\alpha(r) := 1 + \ri f_\theta'(r) \quad \tand\quad \beta(r) := 1 + \ri f_\theta(r)/r,
\eeq
and let
\beq\label{eq:Ac}
A := 
\begin{cases}
\Ascatin 
\hspace{-1ex}
& \tin \Omegain,\\
\Ascatout 
\hspace{-1ex}
& \tin \Omegaout \cap B_{\RPMLo},\\
HDH^T 
\hspace{-1ex}
&\tin (B_{\RPMLo})^c
\end{cases}
\tand
\frac{1}{c^2} := 
\begin{cases}
\cscatin^{-2} 
\hspace{-1ex}
& \tin \Omegain,\\
\cscatout^{-2} 
\hspace{-1ex}
& \tin \Omegaout \cap B_{\RPMLo},\\
\alpha(r) \beta(r)^{d-1} 
\hspace{-1ex}
&\tin (B_{\RPMLo})^c,
\end{cases}
\eeq
where, in polar coordinates $(r,\varphi)$,
\beq\label{eq:DH2}
D =
\left(
\begin{array}{cc}
\beta(r)\alpha(r)^{-1} &0 \\
0 & \alpha(r) \beta(r)^{-1}
\end{array}
\right) 
\quad\tand\quad
H =
\left(
\begin{array}{cc}
\cos \varphi & - \sin\varphi \\
\sin \varphi & \cos\varphi
\end{array}
\right) 
\tfor d=2,
\eeq
and, in spherical polar coordinates $(r,\phi, \varphi)$,
\beq\label{eq:DH3}
D =
\left(
\begin{array}{ccc}
\beta(r)^2\alpha(r)^{-1} &0 &0\\
0 & \alpha(r) &0 \\
0 & 0 &\alpha(r)
\end{array}
\right) 
\tand
H =
\left(
\begin{array}{ccc}
\sin \phi \cos\varphi & \cos \phi \cos\varphi & - \sin \varphi \\
\sin \phi \sin\varphi & \cos \phi \sin\varphi & \cos \varphi \\
\cos \phi & - \sin \phi & 0 
\end{array}
\right) 
\eeq
for $d=3$. 
Observe that $\Ascatout=I$ and $\cscatout^{-2}=1$ when $r=\RPMLo$ and thus $A$ and $c^{-2}$ are continuous at $r=\RPMLo$.

We highlight that, in other papers on PMLs, the scaled variable, which in our case is $r+\ri f_\theta(r)$, is often written as $r(1+ \ri \widetilde{\sigma}(r))$ with $\widetilde{\sigma}(r)= \sigma_0$ for $r$ sufficiently large; see, e.g., \cite[\S4]{HoScZs:03}, \cite[\S2]{BrPa:07}. Therefore, to convert from our notation, set $\widetilde{\sigma}(r)= f_\theta(r)/r$ and $\sigma_0= \tan\theta$.

Let $\cZ_k$ be still defined by  \eqref{eq:Z1}, but now with $\Omega$ given by \eqref{eq:OmegaImpPML}.
Given $g\in L^2(\Omega)$ with $\supp \, g\subset B_{\Rscat}$, 
a variational formulation of the PML problem is then \eqref{eq:HelmholtzVF} with 
\beq\label{eq:sesqui3}
a(u,v) := 
\left(\int_{\Omega\cap \Omegaout}  + \frac{1}{\gamma}\int_{\Omega\cap\Omegain} \right)
\Big(k^{-2}(A\nabla u) \cdot\overline{\nabla v} - c^{-2} u \overline{v}\Big)
\eeq
and $G(v)$ given by \eqref{eq:HelmholtzG};
this variational formulation is obtained by multiplying the PDEs in \eqref{eq:olddognewtricks} by $c^{-2}_{\rm in/out} \alpha \beta^{d-1}$ and integrating by parts.

\begin{assumption}\label{ass:PML}
When $d=3$, $f_\theta(r)/r$ is nondecreasing.
\end{assumption}

Assumption \ref{ass:PML} is standard in the literature; e.g., in the alternative notation described above it is that 
$\widetilde{\sigma}$ is non-decreasing -- see \cite[\S2]{BrPa:07}.
We record for later the following sign property of $A$ under Assumption \ref{ass:PML}.

\begin{lemma}\label{lem:strong_elliptic}
Suppose that $f_\theta$ satisfies Assumption \ref{ass:PML}.
With $A$ defined by \eqref{eq:Ac}, given $\epsilon>0$
there exists $\Amin>0$ such that, 
for all $\epsilon \leq \theta\leq \pi/2-\epsilon$,
\beqs
\Re \big( A(x) \xi, \xi\big)_2 \geq \Amin \|\xi\|_2^2 \quad\tfa \xi \in \mathbb{C}^d \tand x \in \Omega.
\eeqs
\end{lemma}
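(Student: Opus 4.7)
The plan is to prove the bound region by region, using the piecewise definition \eqref{eq:Ac} of $A$. On $\Omegain$ and on $\Omegaout\cap B_{\RPMLo}$, $A$ coincides with the real symmetric positive definite coefficients $\Ascatin$ or $\Ascatout$ respectively, both of which are $C^{0,1}$ and take values in the SPD cone on a bounded set (using $\Ascatout\equiv I$ outside $B_{\Rscat}$); compactness then gives a uniform positive lower bound on the smallest eigenvalue, and the extension from $\xi\in\Rea^d$ to $\xi\in\Com^d$ follows from symmetry. Neither of these estimates depends on $\theta$, so these regions contribute trivially.

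The only $\theta$-dependent piece is the PML region $(B_{\RPMLo})^c\cap\Omega$, where I will use the factorization $A=HDH^T$ of \eqref{eq:DH2}--\eqref{eq:DH3} with $H$ real orthogonal. Setting $\eta:=H^T\xi\in\Com^d$, one has $\|\eta\|_2=\|\xi\|_2$ and
\[
\Re(A\xi,\xi)_2 \;=\; \Re(D\eta,\eta)_2 \;=\; \sum_{i=1}^d \Re(D_{ii})\,|\eta_i|^2 \;\geq\; \Bigl(\min_{1\leq i\leq d}\Re(D_{ii})\Bigr)\|\xi\|_2^2,
\]
so it suffices to bound each $\Re(D_{ii})$ below by a positive constant uniformly in $r>\RPMLo$ and in $\theta\in[\epsilon,\pi/2-\epsilon]$. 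With $\alpha=1+\ri f_\theta'$ and $\beta=1+\ri f_\theta/r$, direct algebra using $f_\theta,f_\theta'\geq 0$ gives $\Re(\alpha)=1$ together with
\[
\Re(\beta/\alpha) = \frac{1+f_\theta f_\theta'/r}{1+(f_\theta')^2}, \qquad \Re(\alpha/\beta) = \frac{1+f_\theta f_\theta'/r}{1+(f_\theta/r)^2},
\]
both with numerator $\geq 1$ and with denominator uniformly bounded: $\tan\theta\leq\cot\epsilon$, $f'$ is continuous on $[0,\RPMLt]$ and equals $1$ beyond, and $f(r)/r$ is continuous on $(0,\infty)$ with $f(r)/r\equiv 0$ near $r=0$ and $\equiv 1$ beyond $\RPMLt$. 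This disposes of every entry of $D$ when $d=2$, and of the two $\alpha$-entries when $d=3$.

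The main obstacle, and the unique place where Assumption \ref{ass:PML} is needed, is the entry $\beta^2/\alpha$ in the 3D case. A direct computation gives
\[
\Re(\beta^2/\alpha) = \frac{1-(f_\theta/r)^2 + 2(f_\theta/r)f_\theta'}{1+(f_\theta')^2},
\]
whose numerator is a priori of indeterminate sign when $f_\theta/r$ is large relative to $f_\theta'$. The monotonicity of $r\mapsto f_\theta(r)/r$ in Assumption \ref{ass:PML} is equivalent to the pointwise inequality $f_\theta'\geq f_\theta/r$, which forces
\[
1-(f_\theta/r)^2+2(f_\theta/r)f_\theta' \;\geq\; 1-(f_\theta/r)^2+2(f_\theta/r)^2 \;=\; 1+(f_\theta/r)^2 \;\geq\; 1.
\]
Combining this with the uniform denominator bound and the estimates on the non-PML regions yields the claimed $\Amin>0$, uniform in $\theta\in[\epsilon,\pi/2-\epsilon]$.
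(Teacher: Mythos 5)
Your argument is correct and complete. The paper itself does not prove this lemma; it simply cites \cite[Lemma 2.3]{GLSW1}, so your contribution is a self-contained derivation that the paper chose to delegate. The structure is sound: the non-PML regions are handled by compactness and the real symmetry of $\Ascatin,\Ascatout$ (which, as you implicitly use, guarantees that $\Re(A\xi,\xi)_2 = (A\,\Re\xi,\Re\xi)_2 + (A\,\Im\xi,\Im\xi)_2$ for complex $\xi$, since the cross terms cancel by symmetry); the PML region is reduced to the diagonal $D$ via the orthogonality of $H$; the real parts $\Re(\beta/\alpha)$, $\Re(\alpha/\beta)$, $\Re(\alpha)$, $\Re(\beta^2/\alpha)$ are computed correctly; and you correctly isolate $\beta^2/\alpha$ in $d=3$ as the only place where the sign is a priori indeterminate and where Assumption \ref{ass:PML}, equivalently $f_\theta' \geq f_\theta/r$, is essential, yielding numerator $\geq 1+(f_\theta/r)^2$. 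The denominator bounds are uniform because $\tan\theta\leq\cot\epsilon$ and both $f'$ and $f/r$ are bounded (continuous on compacts, constant beyond $\RPMLt$). Since $\Omega$ is bounded, the piecewise estimates combine into a single $\Amin>0$ uniform over $x\in\Omega$ and $\theta\in[\epsilon,\pi/2-\epsilon]$. No gaps.
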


\bpf[Reference for the proof]
See, e.g., \cite[Lemma 2.3]{GLSW1}.
\epf

\bre[Existence and uniqueness of the solution of the PML problem]
Using the fact that the solution of the true scattering problem exists and is unique with $\Ascatout,\Ascatin,\cscatout,\cscatin, \Omega_-,$ and $\Omegain$ described above, 
the solution of the PML variational formulation above exists and is unique (i) for fixed $k$ and sufficiently large $\Rtr-R_1$ by \cite[Theorem 2.1]{LaSo:98}, \cite[Theorem A]{LaSo:01}, \cite[Theorem 5.8]{HoScZs:03} and (ii) for fixed $\Rtr>R_1$ and sufficiently large $k$ by \cite[Theorem 1.5]{GLS2} under the additional assumption that $f_\theta\in C^3$.
\ere

\bre[Accuracy of the PML approximation]
For the particular data $G$ \eqref{eq:HelmholtzG} (i.e., coming from a function supported in $B_{\Rscat}$), it is well-known that, for fixed $k$, the error $\|u-v\|_{H^1_k(B_{\RPMLo}\setminus \Omega)}$ decays exponentially in $R_{\rm tr}-\RPMLo$ and $\tan\theta$; see 
\cite[Theorem 2.1]{LaSo:98}, \cite[Theorem A]{LaSo:01}, \cite[Theorem 5.8]{HoScZs:03}.
It was recently proved in \cite[Theorems 1.2 and 1.5]{GLS2} that the error $\|u-v\|_{H^1_k(B_{\RPMLo}\setminus \Omega)}$ also decreases exponentially in $k$ (again under the assumption that $f_\theta\in C^3$).
\ere

\subsubsection{Summary of the sesquilinear forms $a(\cdot,\cdot)$ and spaces $\cZ_k$}

For truncation by the exact Dirichlet-to-Neumann map and the truncation boundary equal to the boundary of a ball, the sesquilinear form $a(\cdot,\cdot)$ is defined by \eqref{eq:sesqui1} and the space $\cZ_k$ is defined by \eqref{eq:Z1} with $\Omega:= \Omegain \cup(\Omegaout\cap B_R)$.

For truncation by an impedance boundary condition, the sesquilinear form $a(\cdot,\cdot)$ is defined by \eqref{eq:sesqui2} and the space $\cZ_k$ is defined by \eqref{eq:Z1} with $\Omega:= \Omegain \cup(\Omegaout\cap \Omega_\tr)$.

For truncation by a perfectly-matched layer, the sesquilinear form $a(\cdot,\cdot)$ is defined by \eqref{eq:sesqui3} and the space $\cZ_k$ is defined by \eqref{eq:Z1}
with $\Omega:= \Omegain \cup(\Omegaout\cap \Omega_\tr)$.

\subsection{The spaces $\cZ^j_k$ and $\cW^j_k$ and Assumption \ref{ass:balls}}\label{sec:spaces}

With \emph{either} $\Omega:= \Omegain \cup(\Omegaout\cap B_R)$ (for DtN truncation) 
\emph{or} $\Omega:= \Omegain \cup(\Omegaout\cap \Omega_\tr)$ (for impedance or PML truncation), 
let 
\beq\label{eq:WZ}
\cW^j_k :=
L^2(\Omega)\cap \big( H^j_k(\Omega_{\rm in})\oplus H^j_k(\Omega_{\rm out}\cap \Omega)\big)
\quad\tand\quad
\cZ^j_k :=\cW^j_k \cap \cZ_k.
\eeq

The embedding inequality \eqref{e:embedW} immediately holds, and Assumption \ref{ass:balls} holds via standard properties of the $H^j_k$ norm.

\subsection{The assumptions on the sesquilinear form (Assumptions \ref{ass:cont_coer}, \ref{ass:er}, and \ref{ass:commut})}\label{sec:verifySesqui}


\ble[Satisfying Assumption \ref{ass:cont_coer}]
Assumption \ref{ass:cont_coer} is satisfied for the sesquilinear forms defined by \eqref{eq:sesqui1}, \eqref{eq:sesqui2}, and \eqref{eq:sesqui3}.
\ele

\bpf
We first establish the continuity property \eqref{eq:contAb}. For the sesquilinear form \eqref{eq:sesqui3} from PML truncation, 
continuity follows by the Cauchy--Schwarz inequality.
For the sesquilinear form \eqref{eq:sesqui2} from impedance truncation, continuity follows by the Cauchy--Schwarz inequality, and the weighted trace inequality 
\beqs
\N{v}_{L^2(\partial D)} \leq  C k^{1/2} \N{v}_{H^1_k(D)};
\eeqs
see, e.g., \cite[Theorem 1.5.1.10, last formula on page 41]{Gr:85}.
For the sesquilinear form \eqref{eq:sesqui1} from truncation by $\DtN$, continuity follows by the Cauchy--Schwarz inequality, and the inequality
\beqs
k^{-1}\big|\langle \DtN u,v\rangle_{\partial B_R}\big| \leq C  \N{u}_{\cZ_k}\N{v}_{\cZ_k} \quad\tfa u,v\in \cZ_k,
\eeqs
which holds by, e.g., 
\cite[Equation 3.4a]{MeSa:10} (taking into account that \cite{MeSa:10} use a different $k$-weighting in the $H^1$ norm to \eqref{eq:1knorm} -- see the comments after \eqref{eq:1knorm}).

For the local coercivity \eqref{eq:coercivity1}, we claim that, for all three sesquilinear forms, given $\Ascat,\cscat$, and $\gamma>0$, there exist $C_1, C_2>0$ such that 
\beq\label{eq:Garding}
\Re\big\{ a_k(v,v)\big\} \geq C_1 \N{v}^2_{\cZ_k} - C_2 \N{v}_{\cH}^2 \quad \tfa v\in \cZ_k \text{ and for all } k\geq k_0.
\eeq
Once \eqref{eq:Garding} is established, the local coercivity follows from the Poincar\'e--Friedrichs inequality. Indeed, 
\beqs
\N{v}^2_{\cH}\leq \CPF (kr)^2 \N{k^{-1}\nabla v}^2_{\cH}
\quad \tfa v\in \cZ^{<}_k(B(x_0,r)\cap\Omega),
\eeqs
so that if $C_1/2 \geq  C_2 \CPF(kr)^2$ then 
\beqs
\Re\big\{ a_k(v,v)\big\} \geq \frac{C_1}{2} \N{v}^2_{\cZ_k} \quad\tfa v\in \cZ_k \text{ and for all } k\geq k_0,
\eeqs
where we have used that $\|\cdot\|_{\cZ_k}= \|\cdot\|_{H^1(\Omega)}$ in all three cases. Thus, \eqref{eq:coercivity1} holds with 
\beqs
\ccoer:= \left(\frac{C_1}{2 C_2\CPF}\right)^{1/2} \quad\tand\quad \Ccoer:= \frac{C_1}{2}.
\eeqs

For the sesquilinear form \eqref{eq:sesqui2} from impedance truncation, the proof of \eqref{eq:Garding} is immediate.
For the sesquilinear form \eqref{eq:sesqui2} from PML truncation, the proof follows from Lemma \ref{lem:strong_elliptic}. 
For the sesquilinear form \eqref{eq:sesqui1} from truncation by $\DtN$, the proof follows from the inequality $\Re \langle \DtN \phi, \phi\rangle_{\partial B_R} \leq 0$ for all $\phi\in H^{1/2}(\partial B_R)$; see \cite[Second inequality in Equation 2.8]{ChMo:08}, \cite[Equation 3.4b]{MeSa:10}. 
\epf

\ble[Satisfying Assumption \ref{ass:er}]
Suppose that 
\es{$\Ascatout, \cscatout \in C^{\ell,1}(\overline{\Omegaout})$, 
$\Ascatin, \cscatin \in C^{\ell,1}(\overline{\Omegain})$, 
} 
the PML scaling function $f_\theta$ is $C^{\ell+1,1}(\overline{\Omega})$,
and both $\partial \Omega$ and $\Gt$ are $C^{\ell+1,1}$. 
Then Assumption \ref{ass:er} is satisfied for the sesquilinear forms defined by \eqref{eq:sesqui1}, \eqref{eq:sesqui2}, and \eqref{eq:sesqui3}. 
\ele

\bpf
By the definition of $\cW^j_k$ and $\cZ^j_k$, the required bound \eqref{eq:ellipticRegB} is
\begin{align}\nonumber
&\N{u}_{H^{j+2}_k(U_0 \cap \Omegain)} + \N{u}_{H_k^{j+2}(U_0 \cap \Omegaout)} \\
&\leq C \Big(\N{u}_{L^2(U_1)} + \N{ k^{-2}\nabla \cdot(A \nabla u) + c^{-2}u }_{H^j_k(U_1\cap \Omegain)} 
+ \N{k^{-2}\nabla\cdot(A \nabla u) + c^{-2}u}_{H^j_k(U_1\cap \Omegaout)}
\Big)
\label{eq:burrito_required}
\end{align}
for all $u\in \mathcal{Z}_k^<(U_1)$ and $j=0,\ldots,\ell$,
where $u$ satisfies the transmission conditions \eqref{eq:transmission} across $\Gt$, either a Dirichlet or Neumann boundary condition on $\Gamma_-$, and either a Dirichlet or impedance boundary condition on $\Gamma_{\tr}$.

By assumption, the characteristic length scales of both $U_0$ and $U_1$ are proportional to $k^{-1}$; denote this length scale (temporarily) by $L$.

We now claim that 
there exists $C>0$ such that,  for all $u\in \mathcal{Z}_k^<(U_1)$ and $j=0,\ldots,\ell$,
\begin{align}\nonumber
&\N{u}_{H^{j+2}(U_0\cap \Omegain)} + \N{u}_{H^{j+2}(U_0\cap \Omegaout)} \\
&\quad\leq C \Big(
L^{-j-1}\N{u}_{H^1(U_1)} + \N{ \nabla \cdot(A \nabla u) }_{H^j(U_1\cap \Omegain)} 
+ \N{\nabla\cdot(A \nabla u)}_{H^j(U_1\cap \Omegaout)}
\Big).
\label{eq:burrito1}
\end{align}
Indeed, this result without the $L$ dependence is proved 

(i) away from the boundary in, e.g., \cite[Theorems 4.7, 4.16]{Mc:00}, 

(ii) locally next to a Dirichlet or Neumann boundary  in \cite[Theorem 4.18]{Mc:00}, 

(iii) locally next to a transmission boundary with $\gamma=1$ in \cite[Theorem 4.20]{Mc:00} and for general $\gamma$ in 
\cite[Theorem 5.2.1(i)]{CoDaNi:10}, 

(iv) locally next to an impedance boundary in \cite[Theorem 3.4]{GS3}, and 

\es{(v) locally next to $\partial B_R$ on which $k^{-1}\partial_n u =  \DtN u $ in \cite[Theorem 3.5]{GS3}.}

In all cases, the $L$-dependence can be inserted, either by keeping track of the constants in the proofs, or by a scaling argument
(using the fact that the constant in the bound on the $\mathcal{O}(1)$ domain depends only on the $C^{\ell,1}$ norms of $\Ascatout,\Ascatin, \cscatout$, and $\cscatin$, the $C^{\ell+1,1}$ norm of $f_\theta$, and the $C^{\ell+1,1}$ norms of $\partial \Omega$ and $\Gt$).

Multiplying \eqref{eq:burrito1} by $k^{-(j+2)}$, we obtain that  
\begin{align*}\nonumber
&k^{-j-2}|u|_{H^{j+2}(U_0 \cap \Omegain)} + k^{-j-2}|u|_{H^{j+2}(U_0 \cap \Omegaout)} \\
&\quad\leq C \Big(
(kL)^{-j-1}\N{u}_{H^1_k(U_1)} + \N{ k^{-2}\nabla \cdot(A \nabla u) }_{H^j_k(U_1\cap \Omegain)} 
+ \N{k^{-2}\nabla\cdot(A \nabla u)}_{H^j_k(U_1\cap \Omegaout)}
\Big)
\end{align*}
for $j=0,\ldots,\ell$,
and thus, using that $L= Ck^{-1}$, 
\begin{align}\nonumber
&\N{u}_{H^{j+2}_k(U_0 \cap \Omegain)} + \N{u}_{H_k^{j+2}(U_0 \cap \Omegaout)} \\
&\quad\leq C \Big(\N{u}_{H^1_k(U_1)} + \N{ k^{-2}\nabla \cdot(A \nabla u) }_{H^j_k(U_1\cap \Omegain)} 
+ \N{k^{-2}\nabla\cdot(A \nabla u)}_{H^j_k(U_1\cap \Omegaout)}
\Big)
\label{eq:burrito2}
\end{align}
for $j=0,\ldots,\ell$. Since $r+d\leq \ccoer k^{-1}$ the coercivity \eqref{eq:coercivity1} holds. 
We then obtain \eqref{eq:burrito_required} from \eqref{eq:burrito2} by using the Lax--Milgram lemma, the triangle inequality, and the fact that multiplication by a $C^{\ell,1}$ function is continuous from $H^\ell$ to $H^\ell$ (see, e.g., \cite[Theorem 7.4]{BeHo:21} with 
$s_1=\ell-1, s_2 = \ell, s=\ell, p_1=\infty, p_2=2,$ and $p=2$). 
\epf

\ble[Satisfying Assumption \ref{ass:commut}]
\label{lem:ass_commut}
Suppose that \es{$\Ascatout, \cscatout \in C^{\ell,1}(\overline{\Omegaout}) \cap C^{\ell+1,1}(\Gt)$, 
$\Ascatin, \cscatin \in C^{\ell,1}(\overline{\Omegain})\cap C^{\ell+1,1}(\Gt)$, 
} 
the PML scaling function $f_\theta$ is $C^{\ell,1}(\overline{\Omega})$, 
and both $\partial \Omega$ and $\Gt$ are $C^{\ell+1,1}$. 
Then Parts (i), (ii), and (iii) of Assumption \ref{ass:commut} are satisfied for the sesquilinear forms defined by \eqref{eq:sesqui1}, \eqref{eq:sesqui2}, and \eqref{eq:sesqui3}.
\ele

\es{
To prove Lemma \ref{lem:ass_commut} we need the following lemma.

\ble\label{lem:tubular}
Given an
open set $U\subset \Rea^d$ with outward-pointing unit-normal vector $\nu$ and $C^{m+1,1}$ compact boundary, 
symmetric positive-definite functions 
$A_{\rm in}\in C^{m,1}(\partial U, \Rea^{d\times d})$, 
$A_{\rm out}\in C^{m,1}(\partial U, \Rea^{d\times d})$, 
$x_0\in \Rea^d$, and $0<r<R$, the following is true. There exists  $\psi:\Rea^d \to \R$ that is continuous on $\Rea^d$, $C^{m,1}$ on both $\overline{U}$ and $\Rea^d\setminus U$, and such that 
\beqs
\psi \equiv 0 \,\ton\, B(x_0,R)^c, \quad\quad\psi \equiv 1\,\ton\,B(x_0,r),
\eeqs
and
\beq\label{eq:derivative_condition1}
\nabla( \psi|_U) \cdot (A_{\rm in}\nu) =
\nabla (\psi|_{\Rea^d\setminus \overline{U}} )\cdot (A_{\rm out}\nu) = 0 \quad\ton \quad\partial U.
\eeq
\ele

We postpone the proof of Lemma \ref{lem:tubular} until after the proof of Lemma \ref{lem:ass_commut}. However, we highlight here that 
the construction of such a $\psi$ is possible since, by positive definiteness of $\Ascatin$ and $\Ascatout$, $(\Ascatin \bnu)\cdot \bnu$ and $(\Ascatout \bnu)\cdot\bnu$ are not zero for any $\bx \in \partial U$; the vectors $\Ascatin \bnu$ and $\Ascatout \bnu$ are therefore never tangent to $\partial U$ and so prescribing that $\psi$ is constant in these directions at $\partial U$ is possible.
}

\

\bpf[Proof of Lemma \ref{lem:ass_commut} \es{using Lemma \ref{lem:tubular}}]
Let $\psi\in C_{\rm comp}(\Rea^d;[0,1])$ satisfy the following four conditions:

\noi (1) 
\beq\label{eq:0400_1}
\supp\, \psi \subset B(x_0, r+3d/4) \quad\tand\quad \psi\equiv 1 \quad \ton B(x_0,r+d/4),
\eeq
(2)
\beqs
\psi \in C^{\es{\ell+1,1}}\big( B(x_0, r +3d/4) \cap \Omegain \big) \cap C^{\es{\ell+1,1}}\big( B(x_0, r +3d/4) \cap \Omegaout\cap \Omega_{\tr} \big)
\eeqs
with $\|\partial^j \psi\|_{L^\infty}\leq C d^{-j}$ in each of the two regions \es{for $j=1,\ldots,\ell+2$}, and 

\noi (3) with $\bnu$ the outward-pointing unit normal vector to $\Omegain$, 
\beq\label{eq:0400_3}
(\Ascatin \nabla \psi)\cdot \bnu =0 
\quad\tand\quad 
(\Ascatout \nabla \psi)\cdot \bnu =0 \quad\ton \Gt
\eeq
when the limits are taken from $\Omegain$ and $\Omegaout$, respectively, and 

\noi (4) with $\bnu$ the outward-pointing unit normal vector to $\Omega$, 
\beq\label{eq:0400_3a}
(\Ascatout \nabla \psi)\cdot \bnu =0 \quad\ton \Gamma_{\tr}
\eeq
and 
\beq\label{eq:0400_3b}
\text{ either }\quad (\Ascatin \nabla \psi)\cdot \bnu =0  \quad\ton\Gamma_- \quad\text{ or }\quad
(\Ascatout \nabla \psi)\cdot \bnu =0  \quad\ton\Gamma_-
\eeq
depending on whether $\Omega_-$ is inside $\Omegain$ or $\Omegaout$.

\es{Conditions (3) and (4) imply there are at most three interfaces across which $\nabla \psi$ jumps -- see Figure \ref{fig:final}. 
For each interface we construct such a $\psi$ by Lemma \ref{lem:tubular}, and then a $\psi$ satisfying the three jumps in Conditions (3) and (4), along with Conditions (1) and (2), is constructed using a partition of unity (where each of the three partition of unity functions is one near one of the three interfaces).}
%
%
%
%


\begin{figure}

\centering
\scalebox{0.8}{
\begin{tikzpicture}

\def\radiusA{4cm}
\def\radiusB{1.5cm}
\def\radiusC{1cm}

\def\startangle{-40}
\def\endangle{40}
\def\startangleB{-40}
\def\endangleB{40}
\def\cutangle{70}
\begin{scope}[scale=2,rotate=90]

\begin{scope}

\coordinate (O1) at (0,0);
\coordinate (O4) at (-1cm,0);
\coordinate (O2) at (3cm,0);
\coordinate (O3) at (1cm,0);
    \clip   ({\radiusA*cos(\cutangle)},{-\radiusA*sin(\startangle)})rectangle ({\radiusA*1.2},{\radiusA*sin(\startangle)});
    \draw[name path=circleB, pattern=north east lines] (O2) circle (\radiusB);
\draw[name path=circleA]  (O1) ++(\startangle:\radiusA) arc (\startangle:\endangle:\radiusA);
\draw[name path=circleD]  (O4) ++(\startangleB:\radiusA) arc (\startangleB:\endangleB:\radiusA);
            \draw[name path=circleC] (O3) circle (\radiusC);



     \draw (O1) ++ (35:\radiusA-7)node[below] {$\Omega_{\rm{out}}\cap \Omega$};
     \draw (O4) ++ (35:\radiusA-7)node[below] {$\Omega_{\rm{in}}$};
          \draw  (35:\radiusA+7)node[below] {$\Gamma_{\rm{tr}}$};
\begin{scope}[xshift=-1cm]
       \draw  (\startangleB+5:\radiusA)node[right] {$\Gamma_{\rm{p}}$};
       \end{scope}
       \begin{scope}[xshift=3cm]
       \draw  (0:\radiusB-10)node[above] {$\rm{supp}\ \psi$};
       \end{scope}
      \draw (1.5cm,-1cm)node[right]{$\Gamma_-$};
       \draw[->] (.9,0) --(.5,0);
            
\end{scope}            
\end{scope}




\end{tikzpicture}
}
\caption{
\es{The relative  locations of $\Gamma_{\tr}$, $\Gt$, $\Gamma_-$, and $\supp \,\psi$ (for particular choices of $x_0$ and large $r$ and $d$), where $\Omega_-$ and $\Ot$ are as in Figure \ref{fig:two_cases}(b) (i.e., $\Omega_-$ is inside $\Omegain$).
The condition \eqref{eq:0400_3a} is imposed on $\Gamma_{\tr}$, 
the condition \eqref{eq:0400_3} is imposed on $\Gt$, and 
the first condition in \eqref{eq:0400_3b} is imposed on $\Gamma_{-}$.
}
}
\label{fig:final}
\end{figure}

Part (i) of Assumption \ref{ass:commut} holds from \eqref{eq:0400_1}. Part (ii) of Assumption \ref{ass:commut} holds by the Leibnitz rule applied piecewise in $B(x_0, r +3d/4) \cap \Omegain$ and $B(x_0, r +3d/4) \cap (\Omegaout\cap\Omega)$.

We now check Part (iii). For all three of the sesquilinear forms, 
\begin{align}\nonumber
&k^{2}\big(a_k(\psi u,v) - a_k(u,\psi v)\big)\\ \nonumber
&\quad= \int_{\Omega\cap\Omegaout} A \nabla(\psi u)\cdot\overline{\nabla v}  - A \nabla u \cdot \nabla(\psi \overline{v})
+\gamma^{-1}\int_{\Omegain} \Ascatin \nabla(\psi u)\cdot \overline{\nabla v}  - \Ascatin \nabla u \cdot \nabla(\psi \overline{v})\\
&\quad=\int_{(\Omega\cap\Omegaout)\cap (U_1\setminus U_0)} \nabla \psi \cdot\big( u (A\overline{\nabla v}) - \overline{v}(A\nabla u)\big)
+\gamma^{-1}\int_{\Omegain\cap (U_1\setminus U_0)} \nabla \psi \cdot\big( u (\Ascatin\overline{\nabla v}) - \overline{v}(\Ascatin\nabla u)\big).
\label{eq:0400_2}
\end{align}
We now prove \eqref{eq:commute} with the first argument of the minimum on the right-hand side; the proof for the second argument follows by swapping the roles of $u$ and $v$.

We first bound the fourth term on the right-hand side of \eqref{eq:0400_2}; the analogous term over $(\Omega\cap \Omega)\cap (U_1\setminus U_0)$ (i.e., the second term on the right-hand side of \eqref{eq:0400_2}) is bounded in an identical way.
We use (in the following order) the definition of $\Wjdual{\cdot}{j-1}{U_1\setminus U_0}$, the analogue of \eqref{eq:improvedLeibniz} in the $\cW^j_k$ norms (and with $\psi$ replaced by $k^{-1}\nabla \psi$), 
the fact that $\Ascatin$ is $C^{\ell,1}$, 
and the fact that $k^{-1}\partial$  
maps $\cZ^j_k$ to $\cW^{j-1}_k$ with norm bounded independent of $k$
(with  $\cW^j_k$ and $\cZ^j_k$ given by \eqref{eq:WZ})  to obtain that, for $j=1,\ldots,\ell+1$, 
\begin{align}\nonumber
k^{-2}\left|\int_{\Omegain\cap (U_1\setminus U_0)}  \overline{v} (\nabla \psi) \cdot (\Ascatin\nabla u)\right|
&\leq \Wjdual{v}{j-1}{U_1\setminus U_0}\Wj{(k^{-1}\nabla \psi) \cdot (k^{-1}\Ascatin\nabla u)}{j-1}{\Omegain\cap (U_1\setminus U_0)}\\
&\leq \Wjdual{v}{j-1}{U_1\setminus U_0}\frac{C C_\dagger}{kd} \bigg(\sum_{m=0}^{j-1}(kd)^{-(j-1-m)}\bigg)\Zj{u}{j}{U_1\setminus U_0}.
\label{eq:0400_4}
\end{align}

It therefore remains to bound the first and third terms on the right-hand side of \eqref{eq:0400_2}. By the symmetry of $\Ascatin$ and the divergence theorem,
\begin{align*}
\int_{\Omegain \cap (U_1\setminus U_0)}
u (\nabla \psi)\cdot (\Ascatin \overline{\nabla v}) 
&=\int_{\Omegain \cap (U_1\setminus U_0)}
(\overline{\nabla v})\cdot (u\Ascatin \nabla \psi) \\
&=\int_{\partial(\Omegain \cap (U_1\setminus U_0))}
u \overline{v} (\Ascatin\nabla \psi)\cdot \bnu
- 
\int_{\Omegain \cap (U_1\setminus U_0)}
\overline{v} \nabla\cdot( u \Ascatin \nabla\psi).
\end{align*}
We now claim that the boundary integral over $\partial(\Omegain \cap (U_1\setminus U_0))$ is equal to zero. 
This boundary integral can be split into integrals over $\partial(U_1\setminus U_0)$, $\Gt$,  $\Gamma_-$, and $\Gamma_{\tr}$. 
(Note that, since $U_1$ has characteristic length scale $k^{-1}$ and $\Gamma_-, \Gt,$ and $\Gamma_{\tr}$ are all a $k$-independent distance apart, there will never be boundary integrals over any two of $\Gamma_-, \Gt,$ and $\Gamma_{\tr}$ at the same time for $k$ sufficiently large.)
The integrals over $U_1\setminus U_0$, vanish because $\nabla\psi$ is zero here, and the integrals over $\Gt$ and $\partial\Omega$ vanish because of the first conditions in \eqref{eq:0400_3} and \eqref{eq:0400_3a}.

Therefore
\begin{align*}
k^{-2}\int_{\Omegain \cap (U_1\setminus U_0)}
u (\nabla \psi)\cdot (\Ascatin \overline{\nabla v}) 
&=
k^{-2}\int_{\Omegain \cap (U_1\setminus U_0)}
\overline{v} \nabla\cdot( u \Ascatin \nabla\psi)\\
&=
k^{-2}\int_{\Omegain \cap (U_1\setminus U_0)}\Big(
\overline{v} (\nabla u)\cdot( \Ascatin \nabla\psi) + \overline{v}u \nabla\cdot (\Ascatin \nabla\psi)\Big).
\end{align*}
The first term on the right-hand side is bounded exactly as in \eqref{eq:0400_4}; the second term is bounded similarly, and the proof is complete.
\epf

\

\es{
It therefore remains to prove Lemma \ref{lem:tubular}.

\

\bpf[Proof of Lemma \ref{lem:tubular}]
Let $\chi \in C^\infty_{\rm comp}(\R^d)$ be supported in $B(x_0,R')$ and identically $1$ in $B(x_0,r')$ with $r < r' < R' < R$.

Since $\Ascatin\nu$ and $\Ascatout\nu$ are uniformly transverse to $\partial U$, 
we now claim that there exists a neighborhood $U_{\varepsilon}$ of $\partial U$ in $\R^d$ such that 
\beqs
X_{\rm in} : \partial U \times [-\varepsilon,\varepsilon] \to U_{\varepsilon},\quad X_{\rm in}(x,t):= x + t \Ascatin(x)\nu(x)
\eeqs
and 
\beqs
X_{\rm out} : \partial U \times [-\varepsilon,\varepsilon] \to U_{\varepsilon},\quad X_{\rm out}(x,t):= x + t \Ascatout(x)\nu(x)
\eeqs
are bijections. Indeed, this follows from (a) the inverse function theorem (valid for Lipschitz maps -- and hence for $A_{\rm in/out}\in C^{0,1}$ and $\nu \in C^{1,1}$ -- by \cite[Theorem 1]{Cl:76})
and (b) the fact that, for $\varepsilon$ small, $X_{\rm in/out}$ are injective.

Furthermore, since $X_{\rm in}$ maps into $U$ for $t< 0$ and $X_{\rm out}$ maps into $\Rea^d\setminus \overline{U}$ for $t> 0$, we define
\beqs
X(x,\varepsilon):=
\begin{cases}
X_{\rm in}(x,t), &t< 0,\\
X_{\rm out}(x,t), &t\geq 0.
\end{cases}
\eeqs
%
%
 Define $\widetilde\psi$ on $U_\varepsilon$ by $\widetilde\psi(X(x,t)) = \widetilde\psi(X(x,0)) = \chi(x)$ and observe that 
 \eqref{eq:derivative_condition1} holds.
 

Now, since $\chi \equiv 1$ on $\partial U \cap B(x_0,r')$, by reducing $\varepsilon$ if necessary, 
we can ensure that $\widetilde\psi \equiv 1$ on $B(x_0,r) \cap U_{\varepsilon}$. 
Similarly, since $\chi  \equiv 0$ on $\partial U \cap B(x_0,R')$, by reducing $\varepsilon$ if necessary, we can ensure that $\widetilde\psi \equiv 0$ on $U_\varepsilon \cap B(x_0,R)$.

 We extend $\widetilde\psi$ by $0$ outside $U_{\varepsilon}$, and define $\psi := \eta \widetilde\psi + (1 - \eta) \chi$ where $\eta$ is smooth, supported on $U_{\varepsilon/2}$ (so that $\psi$ is smooth away from $\partial U$ and $\equiv 1$ on $B(x_0,r)$), and identically equal to one in $U_{\varepsilon/4}$ (to preserve the condition on $\partial U$).
\epf
}

\subsection{The assumptions on $V_h$ (Assumptions \ref{ass:ap}, \ref{ass:sa}, and \ref{ass:ii})}\label{sec:verifySubspace}

\ble
Assumptions \es{\ref{ass:bn},} 
 \ref{ass:ap}, \ref{ass:sa}, and \ref{ass:ii} hold when $V_h$ is a space of Lagrange finite elements 
 \es{that resolves $\partial \Omega$ and $\Gt$}, with $p$ the polynomial degree, $\kappa =1$, the constants $\Capprox, \Csuperk,$ and $\Cinv$ independent of $h_K$ and $k$, and (for Assumption \ref{ass:ap}) the space $\cZ_k^j$ given as in \S\ref{sec:spaces}.
\ele

\bpf 
\es{In Assumption \ref{ass:bn}, 
\eqref{eq:normsOnElements} and \eqref{ass:loc} are satisfied by the definition of $\cZ_k$ \eqref{eq:Z1}, 
and \eqref{eq:controlHONorms} is satisfied by the assumption that $V_h$ resolves  $\partial \Omega$ and $\Gt$.
}

The approximation property of Assumption \ref{ass:ap} with $\Ckappa=1$ and $u_h$ equal to the Lagrange interpolant of $u$ holds by, e.g., \cite[Theorem 4.4.20]{BrSc:08} and the definition \eqref{eq:defWeightedNorms} of the $k$-weighted norms. 

When $\chi$ is a smooth function on $K$ and $v_h$ is the Lagrange interpolant of $\chi^2 u_h$, 
\begin{align}\label{eq:sa1}
\norm{\chi^2 u_h - v_h}_{L^2(K)} &\leq \Csuper \frac{h_K}{d} \norm{u_h}_{L^2(K)} 
\quad\tand
\\ \label{eq:sa2}
\norm{\nabla(\chi^2 u_h - v_h)}_{L^2(K)} &\leq \Csuper \left(\frac{h_K}{d} \norm{\nabla (\chi u_h)}_{L^2(K)} + \frac{h_K}{d^2} \norm{u_h}_{L^2(K)}\right)
\end{align}
by \cite[Theorem 2.1]{DeGuSc:11}, \cite[Theorem 1]{Br:20}. 
The bound \eqref{eq:saH1k} then follows from \eqref{eq:defWeightedNorms}, \eqref{eq:sa1}, \eqref{eq:sa2}, and the inequality 
\beq\label{eq:inequality}
\sqrt{a^2+b^2}\leq a+b
\quad\tfa a,b>0.
\eeq
That \eqref{eq:dU0U1} holds with $\kappa=1$ follows the fact that 
$\supp \chi$ and $B(0,d)$ are $d/4$ apart, and thus $v_h\in V_h^{<}(U_1)$ is ensured by $\max_{K \cap U_1\neq\emptyset} h_K<d/4$.

By, e.g., \cite[Lemma 4.5.3]{BrSc:08}, 
given $p\in \mathbb{Z}^+$, there exists a constant $\Cinv > 0$ such that
	\begin{equation}
		\label{eq:ii}
		\norm{u_h}_{H^s(K)} \leq \Cinv h_K^{-s} \norm{u_h}_{L^2(K)} 
	\end{equation}
for all $K \in \mathcal{T}$, $u_h \in V_h$, and $0\leq s\leq p$.
Then, by \eqref{eq:ii}, \eqref{eq:defWeightedNorms}, and \eqref{eq:Cppw}, there exists $\Cinvk>0$ such that, for $0\leq s\leq p$,
\beq\label{eq:scream1}
\norm{u_h}_{H^s_k(K)} \leq \frac{\Cinvk}{(h_Kk)^s} \norm{u_h}_{L^2(K)};
\eeq
the inequality \eqref{eq:iiH1k1} is then this with $s=1$ 

The inequality \eqref{eq:iiH1k2} follows by repeating the proof of 
\cite[Theorem 3.6]{GrHaSa:05} with the inverse inequality \eqref{eq:ii} replaced by \eqref{eq:scream1} in \cite[Equation 3.23]{GrHaSa:05}.
Note that in \cite{GrHaSa:05}, the $H^{-s}(K)$ norm is defined with a supremum ranging over all elements of $H^s(K)$, but, as stated in \cite[Remark 3.8]{GrHaSa:05}, the proof works without modification for the $H^{-s}(K)$ norm defined by \eqref{eq:negativeNorm} with $k=1$ and equivalent to a norm defined with a supremum over elements of $H^s(K)$ supported inside $K$ as in \eqref{eq:negativeNormEquiv}. 
\epf

\appendix

\section{Recap of Sobolev spaces weighted by $k$}\label{sec:norms}
Given $k>0$, let
\beq\label{eq:FT}
\mathcal F_k\phi(\xi) := \int_{\mathbb R^d} \exp\big( -\ri k x \cdot \xi\big)
\phi(x) \, \rd x
\eeq
and, for $s\in\Rea$, let
\beq\label{eq:Sobolev}
H_k ^ s (\Rea^d):= \Big\{ u\in \cS^*(\mathbb R^d), \; \langle \xi \rangle^s 
\mathcal F_k u \in  L^2(\mathbb R^d) \Big\}
\quad\text{ with }\quad
\norm{u}_{H_k^s(\Rea^d)} ^2 := 
\left(\frac{k}{2\pi}\right)^d
 \int_{\Rea^d} \langle \xi \rangle^{2s}
 |\mathcal F_k u(\xi)|^2 \, \rd \xi,
\eeq
where
$\langle \xi \rangle := (1+|\xi|^2)^{1/2}.$
For an open $D\subset \Rea^d$, let 
\beq\label{eq:defWeightedNorms}
\norm{v}_{H^s_k(D)} := \inf_{V\in H^s_k(\Rea^d)\,:\, V|_{D} = v} \norm{V}_{H^s_k(\R^d)}.
\eeq
Recall that, for $s\geq 0$ and $D$ a bounded Lipschitz domain, by, e.g., \cite[Page 77 and Theorem 3.30(i), Page 92]{Mc:00},
\beq\label{eq:neg_norm_intro}
\norm{v}_{H^{-s}_k(D)} \sim \sup_{\norm{w}_{H^s_k(D)} = 1\,,\, \supp w\subset D} |
v(w)|,
\eeq
where $\langle\cdot,\cdot\rangle_D$ is the duality pairing in $D$ and $\sim$ denotes norm equivalence,
and 
\beq\label{eq:neg_norm_intro2}
\norm{v}_{\widetilde{H}^{-s}_k(D)} \sim \sup_{\norm{w}_{H^s_k(D)} = 1} |
v(w)|.
\eeq
 We highlight that, for $s=m \in \mathbb{Z}^+$, \eqref{eq:defWeightedNorms} is equivalent to the norm
\beq\label{eq:weightedNormsOld}
	\vertiii{v}^2_{H^m_k(D)} := \sum_{0 \leq \abs{\alpha} \leq m} k^{-2\abs{\alpha}} \norm{\partial^\alpha v}^2_{L^2(D)},
\eeq
and thus, in particular,
\beq\label{eq:1knorm}
\norm{v}^2_{H^1_k(D)}\sim\vertiii{v}^2_{H^1_k(D)} =k^{-2}\|\nabla v \|^2_{L^2(D)} + \|v \|^2_{L^2(D)},
\eeq
where $\sim$ again denotes norm equivalence.
Many papers on numerical analysis of the Helmholtz equation use the weighted $H^1$ norm 
$\|v\|^2_{H^1_k(D)}:= \|\nabla v \|^2_{L^2(D)} + k^2\|v \|^2_{L^2(D)}$;
we use \eqref{eq:weightedNormsOld}/\eqref{eq:defWeightedNorms} instead since weighting the $j$th derivative by $k^{-j}$ is easier to keep track of than weighting it by $k^{-j+1}$ (especially for high derivatives).

\section*{Acknowledgements}

\es{The authors thanks the referees for their careful reading of the paper and numerous suggestions for improvement.}
The idea of looking at the local FEM error for the Helmholtz equation came out of discussions EAS had with Ralf Hiptmair (ETH Z\"urich). 
MA and EAS were supported by EPSRC grant EP/R005591/1 and JG was supported by EPSRC grants EP/V001760/1 and EP/V051636/1.\\

\noi The authors declared that they have no conflict of interest.

\footnotesize{
\bibliographystyle{plain}
\bibliography{biblio_combined_sncwadditions}
}

\end{document}